\newif\iffullver\fullvertrue
\pgfplotsset{compat=1.9}
\newcommand{\Hom}{{\rm Hom}}
\newcommand{\GL}{\mathrm{GL}}
\newcommand{\Vol}{{\rm Vol}}
\newcommand{\calH}{\mathcal{H}}
\newcommand{\calR}{\mathcal{R}}
\newcommand{\calL}{\mathcal{L}}
\newcommand{\cM}{\mathcal{M}}
\newcommand{\cN}{\mathcal{N}}
\newcommand{\cO}{\mathcal{O}}
\newcommand{\cS}{\mathcal{S}}
\newcommand{\cU}{\mathcal{U}}
\newcommand{\cX}{\mathcal{X}}
\newcommand{\fp}{\mathfrak{p}}
\newcommand{\fB}{\mathfrak{B}}
\newcommand{\fC}{\mathfrak{C}}
\newcommand{\fX}{\mathfrak{X}}
\newcommand{\FF}{\mathbb{F}}
\newcommand{\NN}{\mathbb{N}}
\newcommand{\QQ}{\mathbb{Q}}
\newcommand{\RR}{\mathbb{R}}
\newcommand{\ZZ}{\mathbb{Z}}
\title{The Hidden Lattice Problem}
\author{}
\institute{}
\author{Luca Notarnicola, Gabor Wiese\thanks{University of Luxembourg, Department of Mathematics, Maison du Nombre 6, Avenue de la Fonte, L-4364 Esch-sur-Alzette, Luxembourg}}
\institute{
	\email{notarnicola.luca@internet.lu},
	\email{gabor.wiese@uni.lu} \\[0.5cm] \today
}
\begin{document}
	\maketitle

\begin{abstract}
We consider the problem of revealing a small hidden lattice from the knowledge of a low-rank sublattice modulo a given sufficiently large integer -- the {\em Hidden Lattice Problem}.
A central motivation of study for this problem is the Hidden Subset Sum Problem, 
whose hardness is essentially determined by that of the hidden lattice problem.
We describe and compare two algorithms for the hidden lattice problem: 
we first adapt the algorithm by Nguyen and Stern for the hidden subset sum problem, based on orthogonal lattices, 
and propose a new variant, which we explain to be related by duality in lattice theory. 
Following heuristic, rigorous and practical analyses,
we find that our new algorithm brings some advantages
as well as a competitive alternative for algorithms for problems with cryptographic interest, 
such as Approximate Common Divisor Problems, 
and the Hidden Subset Sum Problem.
Finally, we study variations of the problem and highlight its relevance to cryptanalysis.
\end{abstract}

\subsubsection*{Keywords.} Euclidean Lattices, Lattice Reduction, Cryptanalysis, Approximate Common Divisor Problem, Hidden Subset Sum Problem

\section{Introduction}

The Hidden Subset Sum Problem asks to reveal a set of binary vectors from a given linear combination modulo a sufficiently large integer. 
At Crypto 1999, 
Nguyen and Stern have proposed an algorithm for this problem, based on lattices, \cite{ns99}. Their solution crucially relies on revealing, in the first place, the ``small'' lattice generated 
by the binary vectors: this is the underlying {\em Hidden Lattice Problem} (HLP).
The starting point of this work is to investigate
the HLP independently.
For this article, we define ``small'' lattices as follows.

\begin{definition}\label{def:small}
Let $0<n\leq m$ be integers.  
Let $\Lambda \subseteq \ZZ^m$ be a lattice of rank~$n$ 
equipped with the Euclidean norm.
We define the {\em size} of a basis $\fB$ of $\Lambda$ by 
$$\sigma(\fB) = \sqrt{ \frac{1}{n}\sum_{v \in \fB}\Vert v\Vert^2} .$$
For $\mu \in \RR_{\geq 1}$, we say that $\Lambda$ is {\em $\mu$-small} if $\Lambda$ 
possesses a basis $\fB$ of size $\sigma(\fB) \le \mu$.
\end{definition}

Small lattices naturally occur in computational problems in number theory and 
cryptography. 
For $\Lambda$ as in Def.~\ref{def:small}, we let
$\Lambda_{\QQ}$ (resp.~$\Lambda_{\RR}$) be the $\QQ$-span (resp.~$\RR$-span) of $\Lambda$ in $\RR^m$.
The completion
$\overline{\Lambda}$ of $\Lambda$ is 
$\Lambda_{\QQ}\cap \ZZ^m = \Lambda_{\RR}\cap \ZZ^m$
and we say that $\Lambda$ is complete if
$\overline{\Lambda}=\Lambda$.
As is customary in many computational problems we also
work modulo $N \in \ZZ$
and
write $v \in \Lambda \pmod{N}$ if there 
exists $w \in \Lambda$ such that $v-w \in (N\ZZ)^m$.
If $\Lambda' \subseteq \ZZ^m$, 
then $\Lambda' \subseteq \Lambda \pmod{N}$ 
shall mean $v \in \Lambda \pmod{N}$ for all $v \in \Lambda'$.
We then define the Hidden Lattice Problem as follows.

\begin{definition}\label{defi:HLP}
Let $\mu \in \RR_{\geq 1}$, integers $1 \leq r \leq n \leq m$ 
and $N \in \ZZ$. 
Let $\calL \subseteq \ZZ^m$ be a 
$\mu$-small lattice of rank~$n$.
Further, let $\cM \subseteq \ZZ^m$ be a lattice
of rank $r$ such that $\cM \subseteq \calL \pmod{N}$. 
The {\em Hidden Lattice Problem (HLP)} is the task 
to compute from the knowledge of $n,N$ and a basis of $\cM$, 
a basis of the completion 
of any $\mu$-small lattice $\Lambda$ of rank $n$ such 
that $\cM \subseteq \Lambda \pmod{N}$.
\end{definition}

Since $\cM$ is defined modulo $N$, 
we may view $\cM \subseteq (\ZZ/N\ZZ)^m$.
We analyse for which values of $\mu \in \RR_{\geq 1}$
a generic HLP 
can be expected to be solvable.
Random choices of $\cM$ are likely to uniquely determine the lattice $\calL$, thus $\Lambda = \calL$. 
We will see that $\overline{\calL}$ 
is very often equal to~$\calL$: 
it is the {\em hidden lattice} to be uncovered (note that the completion makes the lattice only smaller).
Our definition is more general than the framework in \cite{ns99} and deviates in two ways:
first, we do not require $\calL$ to possess a basis of {\em binary} vectors
as in \cite{ns99}, but instead control the size of $\calL$ by $\mu$.
Also, instead of assuming a unique vector to be public ($r=1$), we assume a (basis of a) sublattice $\cM$ of arbitrary rank $r$ to be public.

\subsection{Our contributions} 

Our principle aims are to describe 
algorithms for the HLP, analyse them theoretically,
heuristically and practically, and give applications.

\subsubsection*{Algorithms for the HLP.}
We describe two algorithms for the HLP. 
First, we adapt the orthogonal lattice algorithm 
of Nguyen and Stern \cite{ns99},
based on the (public) lattice 
$\cM^{\perp_N}$ of vectors orthogonal to
$\cM$ modulo $N$. It naturally contains the relatively
small lattice $\calL^{\perp}$, which we can
identify by lattice reduction, 
provided that the parameters satisfy certain conditions.
Our major contribution is to propose a new two-step alternative algorithm,
based on the (public) 
lattice $\cM_N$ of vectors that lie in $\cM$ 
modulo $N$.
In this case, we first explain how to recognize vectors 
lying directly in a relatively small sublattice of the completion of the hidden lattice $\calL$
and compute them by lattice reduction.
We explain that the second step of our new algorithm can be designed to only perform linear algebra over finite fields, which is generally very fast.
Therefore, our second algorithm is often faster than the orthogonal lattice algorithm. 
As we can directly compute short vectors in $\calL$ instead of $\calL^\perp$ 
(which avoids the computation of orthogonal complements), it is also conceptually easier than the orthogonal lattice algorithm.
We finally justify that both algorithms are 
related by duality. 
Using celebrated transference results for the successive minima of dual 
lattices, we explain how to bridge both algorithms
theoretically.
Throughout this paper, we refer to both algorithms as Algorithm I and Algorithm II, respectively.
In cryptanalysis,
the orthogonal lattice has been used extensively, since its introduction in \cite{mhrev}. Lattice duality has been used for example in the context of the LWE Problem, see e.g.~\cite{dual_lwe}.

\subsubsection*{Analysis of our algorithms.}
We provide a heuristic analysis of our algorithms based 
on the Gaussian Heuristic for ``random lattices''.
For Algorithm I, we follow the intuition of \cite{ns99}:
short enough vectors $u \in \cM^{\perp_N}$ (which we compute by lattice reduction) 
must lie in $\calL^\perp$.
Since $\calL^\perp$ has rank $m-n$, we expect to find $m-n$ such vectors. 
For Algorithm II, we derive an explicit lower bound on the 
norm of the vectors lying in $\cM_N$ but outside $\calL_\QQ$, 
which gives us a criterion for establishing an explicit parameter selection. 
In both cases, it turns out that the HLP is solvable 
when $N$ is
is sufficiently large with respect to $\mu$.
Quantifying this difference theoretically and practically is a natural question.
For example, both algorithms detect hidden lattices 
of size $\mu = O(N^{\frac{r(m-n)}{nm}})$ 
up to some terms which differ according to the algorithm; in the balanced case $m=2n=4r$, 
this gives $\mu = O(N^{1/4})$.
To quantify the dependence between $N$ and $\mu$ in a compact formula, 
we propose a definition for an arithmetic invariant attached to the HLP, which we justify to behave like an inverse-density, 
a handy and well-studied 
invariant for knapsack-type problems (see e.g.~\cite{cssplo,ns99}).

We next establish proven results for the case $r=1$, 
not conditioned on the Gaussian Heuristic. 
Such formal statements are not included in \cite{ns99}.
For our proofs, we rely on a discrete counting technique.
For a fixed $\mu$-small basis $\fB$ of $\calL$ (sampled from some set of collections of vectors)
and a given integer $N$, 
we denote by $\calH(\fB)$ a finite 
sample set of vectors
constructed from $\fB$ and $N$.
To an element of $\calH(\fB)$, we naturally associate a HLP
with hidden lattice $\calL$. 
On each of these problems, we ``run'' either Algorithm I or Algorithm II, 
and 
``count'' how often our algorithm
successfully computes 
a basis of $\overline{\calL}$ 
by using LLL, \cite{lll82}.\\

At informal level, we can state the following simplified lower bounds for $\log(N)$ in our heuristic and proven analyses. 
In the proven case (for $r = 1$), the lower bound stands for $\log(N\varepsilon)$, where 
$\varepsilon \in (0,1)$ is fixed such that the success rate of the algorithms is $1-\varepsilon$.
Here $\iota$ denotes the root Hermite factor depending on the chosen lattice reduction algorithm (which is LLL in our proven analysis).

\begin{center}
\begin{table}[h]
	\centering
	\renewcommand{\arraystretch}{2.25}
	\begin{tabular}{l c c cc} \hline
		 & & Algorithm & Lower bound for $\log(N)$& \\[0.5ex]
		\hline\hline 
		&&I & $\log(N) > \frac{mn}{r} \log(\iota)  + \frac{mn}{r(m-n)} \log(\mu) + \frac{n}{2r}\log(m-n)$& \\
		\raisebox{2ex}{\textbf{Heuristic}} & &II & $\log(N) > \frac{m}{m-n} \frac{mn}{r} \log(\iota) + \frac{mn}{r(m-n)}\log(\mu) + \frac{mn}{2r(m-n)}\log\left(\frac{n}{m} \right)$  &\\[2.5ex]
		&&I & $\log(N\varepsilon) > mn\log(\iota) + n(n+1)\log(\mu)  
		+ \frac{n(m-n)}{2}\log(\frac{2(m-n)}{3}) + n\log(3\sqrt{n})
		+1$  & \\
		\raisebox{2.5ex}{\textbf{Proven}} & &II & $\log(N\varepsilon) > mn\log(\iota) + n(n+2)\log(\mu) + n\log(3n^2) + 1$&\\
		\hline 
	\end{tabular}
	\label{tab:Summary_bounds}
	\caption{Lower bounds for $\log(N)$ as functions of $n,m,r,\mu$} 
\end{table}
\end{center}

We have implemented our algorithms in SageMath \cite{sage_92}. 
Our practical results confirm our theoretical findings quite accurately. 
Moreover, we see that both algorithms practically 
perform equivalently well, which is heuristically understandable from the duality between them.
In some cases, Algorithm II outperforms Algorithm I:
the second step of Algorithm II is computationally simpler than for 
Algorithm I, leading to strongly improved running times.  


\subsubsection*{Variations and applications.}
Some variations of Def.~\ref{defi:HLP} are of interest to us.
First, we study the case where 
given vectors lie in a small lattice modulo $N$
only up to unknown short ``noise'' vectors; we 
call this the \textit{noisy hidden lattice problem} (NHLP). 
We notice that we can cancel the effect of the noise, by reducing the NHLP to a HLP with a ``larger'' (in the sense of size and dimension) hidden lattice, and 
apply our previous algorithms without changes.
We also consider a \textit{decisional} version (DHLP) 
of the hidden lattice problem, asking
about the existence of a $\mu$-small lattice $\calL$
containing $\cM$ modulo $N$.
This problem, although not asking for the computation of $\overline{\calL}$ lies at the heart of many 
cryptanalytic settings, 
and may thus be of interest to cryptanalysts.
We recognize that the existence of such $\calL$ strongly 
impacts the geometry of $\cM^{\perp_N}$ (or $\cM_N$)
and, consequently, our algorithms 
solve the decisional version heuristically.

Finally, we describe applications of the HLP together 
with some improvements implied by our Algorithm II. 
Our applications show that the HLP appears somewhat naturally in many different frameworks.
We mostly refer to the works \cite{corper, hssp_cg20, cltindslots, fault_attacks_emv_sign, crypt_rsacrt}. 

\section{Background and notation on lattices}
\label{s:background_lattices}

\subsubsection{Lattices.}
Throughout this section we fix a lattice 
$\Lambda \subseteq \ZZ^m$ of positive rank $n$.
We denote by $\Vol(\Lambda)$ its volume and by
$\lambda_i(\Lambda)$, for $1 \leq i \leq n$,
its successive minima.
Minkowski's Second Theorem \cite[Ch.~2, Thm.~5]{nvlllsa} states that 
\begin{equation}\label{eq:minko_2}
\left(\prod_{i=1}^r \lambda_i(\Lambda)\right)^{1/r}
 \leq \sqrt{\gamma_n}\cdot \Vol(\Lambda)^{1/n} \ , \ 1 \leq r \leq n \ ,
\end{equation}
where $\gamma_n$ is Hermite's constant; one has $\gamma_n \leq \frac{2}{3}n$ for $n \geq 2$, \cite{lagariaskzbases}.
We also have $\Vol(\Lambda) \leq \prod_{i=1}^n \lambda_i(\Lambda)$. 
\begin{definition}\label{defi:ortho}
For $N \in \ZZ$ we call
$\Lambda^{\perp_N} =
 \{ v \in \ZZ^m \;|\; \forall\, w \in \Lambda \pmod{N}:\, \langle v,w \rangle \equiv 0 \pmod{N}\}$
the {\em $N$-orthogonal lattice of $\Lambda$}, and 
$\Lambda_N = \{ v \in \ZZ^m \;|\; v \in \Lambda \pmod{N}\} = \Lambda + N\ZZ^m$
the {\em $N$-congruence lattice of $\Lambda$}.
\end{definition}

The lattices $\Lambda^{\perp_N}$ and $\Lambda_N$ only depend on $\Lambda$ modulo $N$ since $(\Lambda_N)_N=\Lambda_N$ and $(\Lambda_N)^{\perp_N}=\Lambda^{\perp_N}$. 
Therefore, we use the same notation for subgroups $\Lambda \subseteq (\ZZ/N\ZZ)^m$, and mean $\Lambda_N := (\pi_N^{-1}(\Lambda))_N = \pi_N^{-1}(\Lambda)$ and 
$\Lambda^{\perp_N} := (\pi_N^{-1}(\Lambda))^{\perp_N}$, 
where $\pi_N: \ZZ^m \to (\ZZ/N\ZZ)^m$ is the natural projection.
Note $\Lambda^{\perp_0} = \Lambda^{\perp}$, the usual orthogonal lattice,
and $\Lambda_0=\Lambda$.
Assume now $N\neq 0$. 
The map $\Lambda^{\perp_N} \to \Hom_{\ZZ}(\Lambda_N,N\ZZ)
\simeq N\cdot \Hom_{\ZZ}(\Lambda_N,\ZZ) \simeq N \cdot (\Lambda_N)^\vee$
sending $w$ to $(v\mapsto \langle v,w\rangle)$
is an isomorphism.
For a basis matrix $A \in \ZZ^{m\times n}$
of $\Lambda$,
$\Lambda^{\perp_N}$ is the kernel of 
$\ZZ^m\to (\ZZ/N\ZZ)^n, v \mapsto A^T v$
and thus has volume dividing $N^n$.
Since the product of the volumes of dual lattices is $1$, 
we conclude that $N^{m-n}$ divides $\Vol(\Lambda_N)$.
If the Gram matrix $A^TA$ of $A$ is invertible over $\ZZ/N\ZZ$, then we have 
equalities 
$\Vol(\Lambda^{\perp_N})=N^n$ 
and 
$\Vol(\Lambda_N) = N^{m-n}$.

\begin{lemma}\label{lem:lattice}
	Let $\Lambda$ be a lattice of positive rank $n$.
\begin{enumerate}[(a)]
\item The completion of $\Lambda$ 
satisfies $(\Lambda^\perp)^\perp = \overline{\Lambda}$.
\item If $\Lambda' \subseteq \Lambda^\perp$ is a sublattice of the same rank 
as $\Lambda^\perp$, then $\overline{\Lambda} = \Lambda'^\perp$.
\item (Hadamard) $\Vol(\Lambda) \le \prod_{v \in \fB} \Vert v\Vert \le \sigma(\fB)^n$ for any basis~$\fB$ of $\Lambda$.
\item $\Vol(\Lambda^\perp) = \Vol(\overline{\Lambda}) \leq \Vol(\Lambda)$
\end{enumerate}
\end{lemma}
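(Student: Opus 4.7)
The plan is to prove the four parts in turn, with (d) carrying the main work. For both (a) and (b), the key observation is the identity $\Lambda^\perp = (\Lambda_\RR)^\perp \cap \ZZ^m$, which holds for any sublattice $\Lambda \subseteq \ZZ^m$ since a vector orthogonal to a $\ZZ$-basis of $\Lambda$ is, by bilinearity, orthogonal to all of $\Lambda_\RR$. The lattice $\Lambda^\perp$ is itself complete: if $k v \in \Lambda^\perp$ for some nonzero $k \in \ZZ$, then $k \langle v, w\rangle = 0$ for every $w \in \Lambda$, so $v \in \Lambda^\perp$. Moreover, scaling any $\RR$-basis of the $(m-n)$-dimensional subspace $(\Lambda_\RR)^\perp$ by a common integer into $\ZZ^m$ shows $\Lambda^\perp$ has rank $m-n$, hence $(\Lambda^\perp)_\RR = (\Lambda_\RR)^\perp$. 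Applying the identity twice then yields (a): $(\Lambda^\perp)^\perp = ((\Lambda_\RR)^\perp)^\perp \cap \ZZ^m = \Lambda_\RR \cap \ZZ^m = \overline{\Lambda}$. For (b), the equal-rank hypothesis forces $\Lambda'_\RR = (\Lambda^\perp)_\RR = (\Lambda_\RR)^\perp$, so the same reasoning gives $\Lambda'^\perp = \Lambda_\RR \cap \ZZ^m = \overline{\Lambda}$.

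For (c), the first inequality is Hadamard's classical bound, which I would obtain via the QR/Gram--Schmidt decomposition of a basis matrix $A$ of $\Lambda$: writing $v_i = v_i^* + \sum_{j < i} \mu_{ij} v_j^*$ with $v_j^*$ orthogonal, the Pythagorean identity gives $\Vert v_i^*\Vert \le \Vert v_i\Vert$, while $A^\top A = R^\top R$ with $R$ upper triangular of diagonal $\Vert v_i^*\Vert$ yields $\Vol(\Lambda) = \prod_i \Vert v_i^*\Vert \le \prod_i \Vert v_i\Vert$. The second inequality follows from AM--GM applied to the $n$ nonnegative reals $\Vert v_i\Vert^2$: $\bigl(\prod_i \Vert v_i\Vert^2\bigr)^{1/n} \le \tfrac{1}{n}\sum_i \Vert v_i\Vert^2 = \sigma(\fB)^2$, and taking the $n/2$-th power gives $\prod_i \Vert v_i\Vert \le \sigma(\fB)^n$.

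For (d), the bound $\Vol(\overline{\Lambda}) \le \Vol(\Lambda)$ is immediate since $\Lambda \subseteq \overline{\Lambda}$ is a finite-index sublattice of equal rank, so $\Vol(\Lambda) = [\overline{\Lambda}:\Lambda]\cdot \Vol(\overline{\Lambda})$. The equality $\Vol(\Lambda^\perp) = \Vol(\overline{\Lambda})$ is the substantive step and, I expect, the main obstacle. My plan is to exploit that $\overline{\Lambda}$ is saturated in $\ZZ^m$: for a basis matrix $A \in \ZZ^{m \times n}$ of $\overline{\Lambda}$, the cokernel $\ZZ^m/\overline{\Lambda}$ is torsion-free, forcing the Smith normal form $U A V = \bigl(\begin{smallmatrix} I_n \\ 0 \end{smallmatrix}\bigr)$ for some $U \in \GL_m(\ZZ)$ and $V \in \GL_n(\ZZ)$. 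From this description one reads $\Lambda^\perp = U^\top(\{0\}^n \oplus \ZZ^{m-n})$, whose Gram matrix is the bottom-right $(m-n)\times(m-n)$ block $M_{22}$ of $M := U U^\top$, giving $\Vol(\Lambda^\perp)^2 = \det(M_{22})$. On the other hand $A^\top A = V^{-\top} N_{11} V^{-1}$, where $N_{11}$ is the top-left $n \times n$ block of $M^{-1}$, so $\Vol(\overline{\Lambda})^2 = \det(A^\top A) = \det(N_{11})$. A Schur-complement computation combined with $\det(M) = \det(U)^2 = 1$ then yields $\det(N_{11}) = \det(M_{22})/\det(M) = \det(M_{22})$, which concludes the equality.
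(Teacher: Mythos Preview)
Your proof is correct. The paper's own proof is far briefer: for (a) and the equality in (d) it simply cites an external reference (Nguyen--Stern's orthogonal lattice paper), and for (b) it derives the result from (a) via the chain $\overline{\Lambda} = (\Lambda^\perp)^\perp \subseteq \Lambda'^\perp$ together with the observation that both sides are complete lattices of the same rank. Your argument for (b) instead goes directly through the identity $\Lambda'^\perp = (\Lambda'_\RR)^\perp \cap \ZZ^m$, which is equally clean and avoids the dependence on (a). For (c) both proofs invoke AM--GM for the second inequality; you additionally spell out the Gram--Schmidt justification of Hadamard's bound, which the paper takes as known.

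The genuinely different contribution is your treatment of the equality $\Vol(\Lambda^\perp) = \Vol(\overline{\Lambda})$ in (d): where the paper defers to the literature, you give a self-contained argument via the Smith normal form of a saturated basis matrix and a Schur-complement identity relating $\det(M_{22})$ to $\det((M^{-1})_{11})$ when $\det(M)=1$. This buys independence from the cited source at the cost of a short linear-algebra computation; the paper's route is shorter on the page but less transparent.
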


\begin{proof}
For (a) and (d), see Sec.~2 and Cor.~2 in \cite{mhrev}. 
The inequality in (d) follows because $\Lambda \subseteq \overline{\Lambda}$.
Statement (b) follows from (a) because
$\overline{\Lambda}= (\Lambda^\perp)^\perp \subseteq \Lambda'^\perp$ 
are of the same rank with $\overline{\Lambda}$ complete.
The last inequality in (c) follows from the arithmetic-geometric mean inequality. \qed
\end{proof}

\subsubsection{Lattice reduction.}
\label{ss:algos_for_lattices}
Let $\Lambda$ be a lattice of positive rank $n$ in $\ZZ^m$. 
We rely on lattice reduction;
given as input a basis of $\Lambda$, 
a lattice reduction algorithm outputs a reasonably 
short basis of $\Lambda$.
In practice, one often uses LLL \cite{lll82} or BKZ \cite{analyzing_blockwise_lattice_reduction}. 
From a theoretical perspective, BKZ gives slightly better approximation factors than LLL,
but it is widely known that lattice reduction performs 
much better in practice 
than what theory predicts (see e.g.~\cite{gama_pred_latt_red,LLL_average,bkz}).
We summarize the behaviour of LLL below, following \cite{lll82}.

\begin{theorem}\label{theo:LLL}
	Let $\{b_i\}_i$
	be a basis of $\Lambda$. 
	Let $\delta \in (1/4,1)$ and $c=1/(\delta-1/4)$.
	The LLL algorithm 
	with reduction parameter 
	$\delta$ outputs 
	a basis $\{b'_i\}_i$ of $\Lambda$ such that 
	$\Vert b'_j \Vert \leq c^{(n-1)/2} \lambda_i(\Lambda)$
	for all $1 \leq j \leq i \leq n$.
\end{theorem}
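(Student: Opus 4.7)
The plan is to invoke the standard properties guaranteed by LLL-reduction with parameter $\delta$. Let $\{b_i^*\}$ denote the Gram--Schmidt orthogonalisation of the returned basis $\{b_i'\}$ and $\mu_{i,j} = \langle b_i', b_j^*\rangle / \|b_j^*\|^2$ the corresponding coefficients. By design LLL ensures the size-reduction bound $|\mu_{i,j}| \leq 1/2$ for $j < i$ together with the Lov\'asz condition $\|b_i^*\|^2 \geq (\delta - \mu_{i,i-1}^2)\|b_{i-1}^*\|^2$ for $2 \leq i \leq n$. Combining both immediately yields $\|b_{i-1}^*\|^2 \leq c\|b_i^*\|^2$ with $c = 1/(\delta - 1/4)$, and by iteration $\|b_j^*\|^2 \leq c^{k-j}\|b_k^*\|^2$ whenever $j \leq k$.

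The next step is a classical lower bound for the successive minima in terms of the Gram--Schmidt norms. Given $i$ linearly independent lattice vectors $v_1,\ldots,v_i$, I would expand each $v_\ell = \sum_k c_{\ell,k} b_k^*$ and observe that the largest index $k$ with $c_{\ell,k} \neq 0$ for some $\ell$ must be at least $i$: otherwise all $v_\ell$ would lie in the span of $b_1^*,\ldots,b_{i-1}^*$, a contradiction. Picking the corresponding $v_\ell$, one obtains $\|v_\ell\| \geq |c_{\ell,k}|\|b_k^*\| \geq \min_{k' \geq i}\|b_{k'}^*\|$ since $c_{\ell,k}$ is a nonzero integer. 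Minimising over all choices of $\{v_\ell\}$ gives the estimate $\lambda_i(\Lambda) \geq \min_{k \geq i}\|b_k^*\|$.

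Next, I would bound each output vector by a Gram--Schmidt norm further up the basis. From the identity $\|b_j'\|^2 = \|b_j^*\|^2 + \sum_{\ell < j}\mu_{j,\ell}^2\|b_\ell^*\|^2$, together with $|\mu_{j,\ell}| \leq 1/2$ and the bound $\|b_\ell^*\|^2 \leq c^{k-\ell}\|b_k^*\|^2$ for $\ell \leq k$, a routine geometric-sum estimate produces an inequality of the shape $\|b_j'\|^2 \leq c^{n-1}\|b_k^*\|^2$ valid for all $j \leq k \leq n$. Combining this with the previous lower bound and choosing $k \geq i$ which realises the minimum $\min_{k' \geq i}\|b_{k'}^*\|$ then yields $\|b_j'\| \leq c^{(n-1)/2}\lambda_i(\Lambda)$ for all $j \leq i$, as desired.

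The main obstacle I anticipate is really a bookkeeping one: one must track the exponent of $c$ through the cascade of inequalities so that it reduces to exactly $(n-1)/2$, and ensure that both the upper bound on $\|b_j'\|$ and the lower bound on $\lambda_i(\Lambda)$ use compatible indices $k$. Once the classical lemma $\lambda_i(\Lambda) \geq \min_{k \geq i}\|b_k^*\|$ is correctly stated (with $k \geq i$, not $k \leq i$), the remaining estimates are elementary manipulations of the Lov\'asz inequality combined with the size-reduction bound.
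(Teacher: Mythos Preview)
Your proof sketch is correct and follows the classical argument from the original LLL paper \cite{lll82}: establish the decay $\|b_\ell^*\|^2 \le c^{k-\ell}\|b_k^*\|^2$ from the Lov\'asz condition, combine it with size-reduction to get $\|b_j'\|^2 \le c^{k-1}\|b_k^*\|^2$ for $j\le k$, and pair this with the lemma $\lambda_i(\Lambda)\ge \min_{k\ge i}\|b_k^*\|$. The paper itself does not give a proof of this theorem; it merely quotes the statement from \cite{lll82}, so there is nothing to compare against beyond noting that your argument is exactly the standard one.
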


Let 
$\{b_i\}_{1\leq i \leq n}$ be a basis of $\Lambda$
with Euclidean norms at most $X \in \ZZ_{\geq 2}$. 
Recall from \cite[Cor.~17.5.4]{gal_math_of_pkc} 
that LLL computes, on input 
$\{b_i\}_{1\leq i \leq n}$,
a reduced basis of $\Lambda$, in
$O(n^5m\log(X)^3)$ bit operations.
In \cite{quadratic_LLL}, using the $L^2$-variant of the LLL algorithm, 
the complexity was improved to 
$O(n^4m(n+\log(X))\log(X))$, quadratic in $\log(X)$ 
(hence the name $L^2$) and based on naive integer multiplication.
See also \cite{quasilinear_LLL} for a variant of LLL with complexity quasi-linear in $\log(X)$.
In this article, we mainly rely on the $L^2$ algorithm with naive integer multiplication 
when analyzing the complexity using LLL.

Whenever we make a heuristic analysis later, we 
assume that a lattice reduction algorithm
outputs a basis $\{b_i'\}_i$ of $\Lambda$ with
$$
\Vert b_i' \Vert \leq \iota^n \lambda_i(\Lambda) \quad , \quad 1 \leq i \leq n\ ,
$$
where the root Hermite factor $\iota > 1$ depends on the reduction algorithm.
By Thm.~\ref{theo:LLL}, $\iota^n=c^{(n-1)/2}$ for LLL,
and $\iota^n=1/2(\gamma_{\beta})^{\frac{n-1}{\beta-1}} (i+3)^{1/2}$ for 
BKZ with block-size $\beta \geq 2$, \cite{schnorr_hierarchy_latt_reduction}.
Heuristically, we can bound the complexity of BKZ from below, by means of an upper bound on $\iota$.
Namely, a root Hermite factor $\iota$ 
is (heuristically) achieved within time at least $2^{\Theta({1/\log(\iota)})}$ 
by using BKZ with block-size $\Theta(1/\log(\iota))$, 
see
\cite{analyzing_blockwise_lattice_reduction}.

\section{Algorithms for the HLP}
\label{s:algos_HLP}
We compare two algorithms for the HLP.
The first one follows the orthogonal lattice algorithm
by Nguyen and Stern, \cite{ns99}.
We then propose a variant based on the (scaled) dual lattice, 
and which has some advantages over the first algorithm.\\

Let us introduce some notation.
For a basis $\fB = \{v_1,\ldots,v_n\}$ of~$\calL$, 
consider the coordinate isomorphism $c_\fB : \calL \to \ZZ^n$, sending 
$\sum_{i=1}^n a_i v_i$ to $(a_1,\ldots,a_n) \in \ZZ^n$. 
Let $\pi_N$ be the natural projection $\ZZ^n \to (\ZZ/N\ZZ)^n$, and denote by 
$c_{\fB,N}: \calL \to (\ZZ/N\ZZ)^n$ the composition $\pi_N c_\fB$.
We now assume that $\calL$ is complete, that is, $\calL = \overline{\calL}$.
Then $N\calL = \calL \cap N\ZZ^m$, and thus
we can extend $c_{\fB,N}$ to $\calL_N \to (\ZZ/N\ZZ)^n$, by setting 
$c_{\fB,N}(\ell + Nt) = c_{\fB,N}(\ell)$ for every $\ell \in \calL$ and $t \in \ZZ^m$.
For $\cM \subseteq \calL \pmod{N}$, that is, $\cM \subseteq \calL_N$, 
define $$\cM_\fB = c_{\fB,N}(\cM) \subseteq (\ZZ/N\ZZ)^n\ , $$
the image of $\cM$ under $c_{\fB,N}$. 
For our algorithms, we consider the lattices 
$(\cM_{\fB})^{\perp_N} \subseteq \ZZ^n$
and 
$(\cM_{\fB})_N = \pi_N^{-1}(\cM_\fB) \subseteq \ZZ^n$ 
of rank $n$, respectively.

\subsection{The orthogonal lattice algorithm for the HLP}
\label{ss:ola}
We adapt the algorithm 
from \cite{ns99}.
Given an instance of the HLP 
with notation as in Def.~\ref{defi:HLP}, 
we have
$\calL^\perp  \subseteq \cM^{\perp_N}$.
In imprecise terms, the smallness of $\calL$ implies the smallness of~$\calL^\perp$.
We argue below that in a sufficiently generic case, $\cM^{\perp_N}$ contains 
a sublattice $\cN_{\rm I}$ of $\calL^\perp$ of the same rank.
By Lem.~\ref{lem:lattice}, $\overline{\calL}=\cN_{\rm I}^\perp$ is a solution to the given HLP.
If
$\cN_{\rm I} \subseteq \calL^\perp$,
then $\cN_{\rm I}^{\perp} = \overline{\calL}$.
The {\em orthogonal lattice algorithm} is as follows; 
we refer to it as Algorithm I.

\begin{algorithm}
	\caption{Solve the HLP using the orthogonal lattice (Algorithm I)}
	\label{alg:hlp_ol}
		\flushleft {\bf Parameters:}
		The HLP parameters $n,m,r,\mu,N$ from Def.~\ref{defi:HLP}\\
		{\bf Input:}
		A valid input for the HLP: 
		a basis of $\cM \subseteq \ZZ^m$ of rank $r$ 
		such that 
		$\cM \subseteq \calL \pmod{N}$ 
		where $\calL$ is a $\mu$-small lattice of rank $n$ in $\ZZ^m$ \\
		{\bf Output:}		
		A basis of the lattice $\overline{\calL}$ (under suitable parameter choice)\\
		(1) Compute a basis matrix $B(\cM,N)$ of $\cM^{\perp_N}$\\
		(2) Run a lattice reduction algorithm on the basis 
		$B(\cM,N)$ to compute a reduced basis $u_1,\dots,u_{\ell}$ 
		of $\cM^{\perp_N}$, where $\ell=m-r$ if $N=0$ and 
		$\ell = m$ otherwise;
		order the vectors $\{u_i\}_i$ by increasing norm\\
		(3) Construct the lattice $\cN_{\rm I} = \bigoplus_{i=1}^{m-n} \ZZ u_i$\\
		(4) Compute and return a basis of
		$\cN_{\rm I}^\perp$ (see Sec.~\ref{ss:practical_aspects})
\end{algorithm}

\subsubsection*{Identifying $\calL^{\perp}$.}
\label{ss:identify_Lperp}
The decisive point in this algorithm is that $\cN_{\rm I}$ is expected to lie in $\calL^\perp$ due to the smallness of the latter for the following heuristic argumentation. A more precise discussion follows below.
Recall that $\calL^\perp$ is a ``small'' 
sublattice of $\cM^{\perp_N}$ 
of rank $m-n$. One hence expects that lattice reduction 
identifies $m-n$ linearly independent ``short'' 
vectors in $\cM^{\perp_N}$. 
Indeed, in practice one sees a significant 
jump in the size of the basis vectors after the first $m-n$ vectors, i.e.~$\cN_{\rm I}$ is the unique ``small'' 
sublattice of $\calL^\perp$ 
of rank $m-n$.
In Sec.~\ref{ss:decisional_HLP} we formulate the decisional hidden lattice problem (DHLP), asking for the existence of $\calL$. 
This size jump is exactly what is detected by the algorithm for the decisional version.
Let us note that heuristically a ``short'' vector orthogonal to~$\cM$ modulo~$N$ is genuinely orthogonal over $\ZZ$.
Consequently, if $m-n' > m-n$ ``short'' vectors 
were found by lattice reduction, 
then $\cM$ would lie in a small lattice of rank 
$n' < n$, which is heuristically not the case.

Prop.~\ref{prop:size_algo1} makes the smallness of $\calL^\perp$ precise by 
giving a lower bound for vectors lying outside~$\calL^\perp$.
For $\fB=\{v_1,\dots,v_n\}$, we also define the group homomorphism 
\begin{equation}\label{eq:homo_phi_B}
\Phi_\fB: \cM^{\perp_N} \to \cM_\fB^{\perp_N} \ , \ 
u \mapsto (\langle u,v_i \rangle)_{i=1,\dots,n} \ .
\end{equation}

Indeed, by the linearity of the scalar product, it is easy to see 
that for vectors $u \in \cM^{\perp_N}$, the vector 
$\Phi_{\fB}(u)$ is in $\cM^{\perp_N}_\fB$.
Note that the kernel of $\Phi_\fB$ is $\calL^\perp$, which is independent of the choice of basis $\fB$.
Following \cite{ns99}, for short enough vectors 
in $\cM^{\perp_N}$, 
their image under $\Phi_{\fB}$ inside 
$\cM_\fB^{\perp_N}$ does not become 
significantly longer (since 
$\{v_i\}_i$ generate a small basis);
if these vectors have Euclidean norm less than 
the first minimum of $\cM_{\fB}^{\perp_N}$, they must be zero in $\cM_\fB^{\perp_N}$,
and hence $u \in \calL^\perp$.

\begin{proposition}\label{prop:size_algo1}
	If $u \in \cM^{\perp_N} \setminus \calL^\perp$, then
	$
	\Vert u \Vert \ge \frac{\lambda_1(\cM_\fB^{\perp_N})}{\sqrt{n}\cdot \mu} 
	$
	for any basis $\fB$ of $\calL$.
\end{proposition}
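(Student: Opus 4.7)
The plan is to exploit the homomorphism $\Phi_\fB$ already defined in~\eqref{eq:homo_phi_B}, whose kernel is precisely $\calL^\perp$. Indeed, writing $\fB = \{v_1, \ldots, v_n\}$, the condition $\Phi_\fB(u) = 0 \in \ZZ^n$ is equivalent to $\langle u, v_i\rangle = 0$ for all $i$, hence by linearity to $u \in \calL^\perp$. Consequently, for $u \in \cM^{\perp_N} \setminus \calL^\perp$, the image $\Phi_\fB(u)$ is a nonzero vector of the lattice $\cM_\fB^{\perp_N}$, and the definition of the first minimum gives
$$\lambda_1(\cM_\fB^{\perp_N}) \le \Vert \Phi_\fB(u)\Vert.$$

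The upper bound on $\Vert \Phi_\fB(u)\Vert$ is then obtained by Cauchy--Schwarz applied coordinate-wise:
$$\Vert \Phi_\fB(u)\Vert^2 = \sum_{i=1}^n \langle u, v_i\rangle^2 \le \Vert u\Vert^2 \sum_{i=1}^n \Vert v_i\Vert^2 = n\,\sigma(\fB)^2\,\Vert u\Vert^2,$$
by Def.~\ref{def:small}. Taking $\fB$ to realize the $\mu$-smallness of $\calL$, so that $\sigma(\fB) \le \mu$, this yields $\Vert \Phi_\fB(u)\Vert \le \sqrt{n}\,\mu\,\Vert u\Vert$. Chaining the two displayed inequalities and dividing by $\sqrt{n}\,\mu$ gives the stated lower bound for $\Vert u\Vert$.

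The main obstacle is essentially nil: the argument is a two-line combination of Cauchy--Schwarz with the minimality of $\lambda_1$. The only point that deserves a sanity check is that $\Phi_\fB$ actually lands in $\cM_\fB^{\perp_N}$ (as already asserted in the preamble to the statement); this is seen by writing any $m \in \cM$ as $\sum_i a_i v_i + Nt$ in $\calL_N$, so that $\langle u,m\rangle \equiv \sum_i a_i \langle u, v_i\rangle \pmod N$, which vanishes modulo $N$ because $u \in \cM^{\perp_N}$, confirming that $\Phi_\fB(u)$ pairs trivially with every element of $\cM_\fB$ modulo~$N$. \qed
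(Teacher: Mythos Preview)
Your proof is correct and follows essentially the same route as the paper's: identify $\ker\Phi_\fB=\calL^\perp$, bound $\Vert\Phi_\fB(u)\Vert$ below by $\lambda_1(\cM_\fB^{\perp_N})$ and above by $\sqrt{n}\,\sigma(\fB)\,\Vert u\Vert$ via Cauchy--Schwarz, then invoke $\sigma(\fB)\le\mu$. The only difference is cosmetic: you spell out the verification that $\Phi_\fB$ lands in $\cM_\fB^{\perp_N}$, which the paper states just before the proposition rather than inside the proof.
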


\begin{proof}
	The kernel of $\Phi_\fB$ is $\calL^\perp$ 
	and is independent of $\fB$.
	So, for $u \in \cM^{\perp_N} \setminus \calL^\perp$, we have
	$ \lambda_1(\cM_\fB^{\perp_N})
	 \le \Vert\Phi_\fB(u)\Vert 
	\le \sqrt{\sum_{i=1}^n \Vert u\Vert^2 
		 \Vert v_i \Vert^2} =\Vert u \Vert \sqrt{n}  \sigma(\fB)$
	by the Cauchy-Schwarz inequality. 
	We conclude using $\sigma(\fB) \leq \mu$, 
	as $\calL$ is $\mu$-small. \qed
\end{proof}

\subsection{An alternative algorithm for the HLP}
\label{ss:dual_algo}

We describe a variant of Algorithm I
based on the (public) lattice $\cM_N$ for $N \neq 0$.
For an instance of the HLP as in Def.~\ref{defi:HLP}, we have
$\cM_N \subseteq \calL_N$.
We argue below that in a sufficiently generic case, 
$\cM_N$ contains a sublattice $\cN_{\rm II}$ of $\overline{\calL}$ of rank $n$.
Under the assumption 
$\cN_{\rm II} \subseteq \calL$,
one obtains a solution to the HLP by $\overline{\cN_{\rm II}} = \overline{\calL}$.
Our algorithm is as follows and we refer to it as Algorithm II:

\begin{algorithm}
	\caption{Solve the HLP using the congruence lattice (Algorithm II)}
	\label{alg:hlp_cong}
	\flushleft {\bf Parameters:}
	The HLP parameters $n,m,r,\mu,N$ from Def.~\ref{defi:HLP}\\
	{\bf Input:}
	A valid input for the HLP: 
	a basis of $\cM \subseteq \ZZ^m$ of rank $r$ 
	such that 
	$\cM \subseteq \calL \pmod{N}$ 
	where $\calL$ is a $\mu$-small lattice of rank $n$ in $\ZZ^m$ \\
	{\bf Output:}		
	A basis of the lattice $\overline{\calL}$ (under suitable parameter choice)\\
	(1)	Compute a basis matrix $B'(\cM,N)$ of $\cM_N$\\
	(2) Run a lattice reduction algorithm on the basis 
		$B'(\cM,N)$ to compute a reduced basis $u_1,\dots,u_m$ 
		of $\cM_N$;
		order the vectors $\{u_i\}_i$ by increasing norm\\
	(3) Construct the lattice
		$\cN_{\rm II} = \bigoplus_{i=1}^{n} \ZZ u_i$ \\
	(4) Compute and return a basis of
		$\overline{\cN_{\rm II}}$ (see Sec.~\ref{ss:practical_aspects})
\end{algorithm}

\subsubsection*{Identifying $\overline{\calL}$ by its smallness.}
\label{ss:identify_L}
The key point of the algorithm is the existence of a somewhat small sublattice $Q \subseteq \cM_N$ of rank $n$. 
Its existence makes lattice reduction applied on $\cM_N$ 
output $n$ short vectors.
Let 
\begin{equation}\label{eq:def_Q}
Q := Q_{\fB,N} := c_\fB^{-1}((\cM_\fB)_N) \cap \calL = {c_{\fB,N}}^{-1}(\cM_{\fB}) \cap \calL\ .
\end{equation} 
Note that $Q \simeq (\cM_\fB)_N$ via the isomorphism $c_\fB$.
The following lemma describes some properties of the lattice $Q$.

\begin{lemma}\label{lem:Q}
	(a) The lattice $Q$ is equal to $\cM_N \cap \calL$.\\
	(b) The index $(\calL : Q)$ is a multiple of $N^{n-r}$, and
	equal to $N^{n-r}$ if $\Vol((\cM_\fB)_N) = N^{n-r}$
\end{lemma}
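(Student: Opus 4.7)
The plan is to leverage the coordinate isomorphism $c_\fB : \calL \to \ZZ^n$, which by construction identifies $Q$ with $(\cM_\fB)_N \subseteq \ZZ^n$. Both parts then reduce to elementary computations on the quotient $\ZZ^n / (\cM_\fB)_N$, together with the completeness assumption $\calL = \overline{\calL}$, which gives $\calL \cap N\ZZ^m = N\calL$ as noted in the excerpt right after the introduction of $c_{\fB,N}$.

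For (a), I would prove the set equality by checking both inclusions directly from the definitions. The forward inclusion $\cM_N \cap \calL \subseteq Q$ goes as follows: given $\ell = m + Nt$ with $m \in \cM$, use $\cM \subseteq \calL_N$ to write $m = \ell' + Nt'$ with $\ell' \in \calL$; then $\ell - \ell' \in \calL \cap N\ZZ^m = N\calL$, hence $c_{\fB,N}(\ell) = c_{\fB,N}(\ell') = c_{\fB,N}(m) \in \cM_\fB$, placing $\ell$ in $Q$. The converse $Q \subseteq \cM_N \cap \calL$ follows by unwinding the extended definition $c_{\fB,N}(\ell + Nt) = c_{\fB,N}(\ell)$: from $c_{\fB,N}(\ell) = c_{\fB,N}(m)$ for some $m \in \cM$, one concludes $\ell - m \in N\ZZ^m$ and hence $\ell \in \cM + N\ZZ^m = \cM_N$.

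For (b), the isomorphism $c_\fB$ immediately gives $(\calL : Q) = (\ZZ^n : (\cM_\fB)_N) = \Vol((\cM_\fB)_N)$. The chain $N\ZZ^n \subseteq (\cM_\fB)_N \subseteq \ZZ^n$ and the natural identification $(\cM_\fB)_N / N\ZZ^n \cong \cM_\fB \subseteq (\ZZ/N\ZZ)^n$ then yield $\Vol((\cM_\fB)_N) = N^n / |\cM_\fB|$. The claim thus amounts to showing that $|\cM_\fB|$ divides $N^r$. Since $\cM$ has rank $r$, the image $\cM_\fB = c_{\fB,N}(\cM)$ is generated by at most $r$ elements of $(\ZZ/N\ZZ)^n$, each of order dividing $N$; by the structure theorem for finitely generated abelian groups, $\cM_\fB$ is a quotient of $(\ZZ/N\ZZ)^r$, so $|\cM_\fB|$ divides $N^r$. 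Hence $N^{n-r}$ divides $(\calL : Q)$, with equality precisely when $\Vol((\cM_\fB)_N) = N^{n-r}$.

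The main technical subtlety — really the only delicate point — is ensuring that the several descriptions of $Q$ (through $c_\fB^{-1}$ on $\ZZ^n$, through the extension of $c_{\fB,N}$ to $\calL_N$, and as $\cM_N \cap \calL$) are used consistently. Once completeness of $\calL$ is invoked to turn $\calL \cap N\ZZ^m$ into $N\calL$, the verification is routine unfolding of definitions.
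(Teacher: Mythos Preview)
Your proof is correct and follows essentially the same approach as the paper: both inclusions in (a) are verified by unwinding the extended map $c_{\fB,N}$ (whose kernel on $\calL_N$ is exactly $N\ZZ^m$, as you use), and part (b) is deduced from the identification $(\calL:Q) = (\ZZ^n:(\cM_\fB)_N) = \Vol((\cM_\fB)_N)$. The only difference is that you spell out explicitly why $N^{n-r}$ divides $\Vol((\cM_\fB)_N)$ via the $r$-generator bound $|\cM_\fB| \mid N^r$, whereas the paper simply invokes the general divisibility fact established in Section~\ref{s:background_lattices}.
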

\begin{proof}
	\textit{(a)} To see that $Q \subseteq \cM_N \cap \calL$, it suffices to show that $Q \subseteq \cM_N$.
	For $q \in Q$, we have by definition, $c_{\fB,N}(q) \in \cM_\fB$, so $c_{\fB,N}(q) = c_{\fB,N}(x)$ for some $x \in \cM$. Therfore 
	$c_{\fB,N}(x-q) = 0$, i.e., $q \in x + N\ZZ^m \subseteq \cM_N$.
	
	To see that $\cM_N \cap \calL \subseteq Q$, let $\ell = x + Nt \in \cM_N \cap \calL$ with 
	$\ell \in \calL, x \in \cM$ and $t \in \ZZ^m$. This gives 
	$c_{\fB,N}(\ell) = c_{\fB,N}(x) \in \cM_\fB$, so 
	$\ell \in c_{\fB,N}^{-1}(\cM_\fB) \cap \calL = Q$. \\
	\textit{(b)} The isomorphism $Q \simeq (\cM_\fB)_N$ implies 
	$(\calL : Q) = (\ZZ^n : (\cM_{\fB})_N) = \Vol((\cM_\fB)_N)$, which is a multiple of $N^{n-r}$.
	If $\Vol((\cM_\fB)_N) = N^{n-r}$, then $(\calL : Q) = N^{n-r}$.
	\qed
\end{proof}

The key point is the following general lemma (Lem.~\ref{lem:bound_algo2}).
When applied to 
$\Lambda' = \calL \subseteq  \calL_N = \Lambda$, 
it gives Prop.~\ref{prop:size_algo2}.

\begin{lemma}\label{lem:bound_algo2}
	Let $\Lambda \subseteq \ZZ^m$ be a
	lattice of rank $m$.
	Let $\Lambda' \subseteq \Lambda$ be a sublattice of rank $1\leq 
	n < m$.
	For every basis $\fB'$ of $\Lambda'$ and
	every
	$u \in \Lambda$ with 
	$u \notin \Lambda'_\QQ$,
	we have
	$$\Vert u \Vert \geq \frac{\Vol(\Lambda)}{\prod_{v \in \fB'} \Vert v\Vert \cdot \prod_{i=n+2}^{m}\lambda_i(\Lambda)} \ .$$
\end{lemma}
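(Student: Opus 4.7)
The plan is to introduce the rank-$(n+1)$ sublattice $M := \Lambda' + \ZZ u \subseteq \Lambda$ and bound $\Vol(\Lambda)$ from above by the product of three quantities: (i) the volume of $M$, (ii) a factor coming from the orthogonal projection of $\Lambda$ onto $M_\RR^{\perp}$, and (iii) nothing else. The hypothesis $u \notin \Lambda'_\QQ$ guarantees that $\fB' \cup \{u\}$ is a basis of $M$, so that Hadamard's inequality (Lem.~\ref{lem:lattice}(c)) immediately yields
\[
\Vol(M) \;\leq\; \|u\|\cdot\prod_{v\in\fB'}\|v\|.
\]
Rearranging the final inequality gives the lower bound on $\|u\|$ claimed in the statement.

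To make the ``projection factor'' rigorous, I would introduce the saturation $M^* := M_\QQ \cap \Lambda$ of $M$ inside $\Lambda$. Since $M \subseteq M^*$ have equal rank, the index is finite and $\Vol(M^*)\leq \Vol(M)$. Because $M^*$ is primitive in $\Lambda$, the orthogonal projection $\pi\colon \RR^m \to M_\RR^{\perp}$ sends $\Lambda$ to a genuine lattice $\pi(\Lambda)$ of rank $m-n-1$, and one has the standard multiplicativity of covolumes
\[
\Vol(\Lambda) \;=\; \Vol(M^*)\cdot\Vol(\pi(\Lambda)).
\]
Combining this with the Hadamard bound on $\Vol(M)$ reduces the problem to bounding $\Vol(\pi(\Lambda))$ by $\prod_{i=n+2}^{m}\lambda_i(\Lambda)$.

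For that last bound, I would first use $\Vol(\pi(\Lambda)) \le \prod_{i=1}^{m-n-1}\lambda_i(\pi(\Lambda))$, noted in Sec.~\ref{s:background_lattices} right after \eqref{eq:minko_2}. It then suffices to compare successive minima under projection: for each $i \in \{1,\dots,m-n-1\}$,
\[
\lambda_i(\pi(\Lambda)) \;\leq\; \lambda_{n+1+i}(\Lambda).
\]
This comparison follows by a standard pigeonhole argument: pick $n+1+i$ linearly independent vectors $x_1,\dots,x_{n+1+i} \in \Lambda$ with $\|x_j\| \leq \lambda_j(\Lambda)\leq\lambda_{n+1+i}(\Lambda)$; since $\ker(\pi|_{\Lambda_\RR}) = M_\RR$ has dimension $n+1$, at least $i$ of the projections $\pi(x_j)$ are linearly independent in $\pi(\Lambda)$, and each has norm at most $\lambda_{n+1+i}(\Lambda)$. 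Multiplying over $i$ yields exactly the product $\prod_{i=n+2}^{m}\lambda_i(\Lambda)$.

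Assembling the three ingredients gives
\[
\Vol(\Lambda) \;\leq\; \|u\|\cdot\prod_{v\in\fB'}\|v\|\cdot\prod_{i=n+2}^{m}\lambda_i(\Lambda),
\]
from which the lemma follows by dividing through. The conceptually most delicate step will be verifying that $M^*$ being primitive in $\Lambda$ legitimately makes $\pi(\Lambda)$ a lattice with the advertised covolume identity; everything else is a direct application of Hadamard, Minkowski-type volume bounds, and the projection--comparison of successive minima.
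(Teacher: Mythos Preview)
Your argument is correct, but it takes a different route from the paper's. The paper avoids the saturation/projection machinery entirely: it simply picks linearly independent vectors $t_1,\dots,t_m\in\Lambda$ with $\|t_j\|=\lambda_j(\Lambda)$, selects $m-n-1$ of them (say $t'_1,\dots,t'_{m-n-1}$) so that $\Omega:=\Lambda'\oplus\ZZ u\oplus\bigoplus_j\ZZ t'_j$ has full rank $m$, and then applies Hadamard once to $\Omega$ together with $\Vol(\Lambda)\le\Vol(\Omega)$. Since any selection of $m-n-1$ indices from $\{1,\dots,m\}$ has norm-product at most $\prod_{i=n+2}^m\lambda_i(\Lambda)$, the bound drops out immediately.

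Your approach factors $\Vol(\Lambda)=\Vol(M^*)\cdot\Vol(\pi(\Lambda))$ via the primitive closure $M^*$ and then bounds $\Vol(\pi(\Lambda))$ through the projection inequality $\lambda_i(\pi(\Lambda))\le\lambda_{n+1+i}(\Lambda)$. This is more structural and the projection lemma is a nice reusable statement, but it requires checking the covolume multiplicativity for primitive sublattices, whereas the paper's proof needs only the Steinitz exchange lemma and one application of Hadamard. In short: the paper's proof is shorter and more elementary; yours is more conceptual and isolates a general comparison of successive minima under projection.
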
	
\begin{proof}
	Since $u \in \Lambda$ and
	$u \notin \Lambda'_\QQ$,
	$\Lambda' \oplus \ZZ u$ is a sublattice of $\Lambda$ of rank $n+1$.
	There are linearly independent (ordered) vectors $t_1,\ldots,t_m \in \Lambda$ with 
	$\Vert t_j \Vert = \lambda_j(\Lambda)$ for all $j$.
	Since $\{t_j\}_j$ are linearly independent, 
	we can choose $m-n-1$ vectors $t'_1,\ldots,t'_{m-n-1}$
	among $\{t_j\}_j$ such that 
	$\Omega = \Lambda' \oplus (\ZZ u) \oplus
	(\bigoplus_{j=1}^{m-n-1} \ZZ t'_j)$
	is a sublattice of $\Lambda$ of finite index.
	In particular,
	$\Vol(\Lambda) \leq \Vol(\Omega)$.
	Since $\prod_{j=1}^{m-n-1} \Vert t_j' \Vert \leq \prod_{i=n+2}^m \lambda_i(\Lambda)$,
	we obtain by Hadamard's Inequality that the volume of $\Omega$ is 
	upper bounded by 
	$
	(\prod_{v \in \fB'} \Vert v \Vert) \cdot \Vert u \Vert \cdot 
	\prod_{j=1}^{m-n-1}\Vert t_j' \Vert 
	\leq 
	(\prod_{v \in \fB'} \Vert v \Vert) \cdot \Vert u \Vert \cdot 
	\prod_{i=n+2}^m \lambda_i(\Lambda)$. \qed
\end{proof}

\begin{proposition}\label{prop:size_algo2}
	Let $u \in \cM_N \setminus \calL_{\QQ}$. Then we have 
	\begin{equation*}
		\Vert u \Vert \geq \frac{1}{\mu^n}\cdot \frac{\Vol(\calL_N)}{\prod_{i=n+2}^m \lambda_i(\cM_N)}
	\end{equation*}
\end{proposition}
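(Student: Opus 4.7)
The proof should be a short and direct application of Lemma \ref{lem:bound_algo2}, as the hint preceding Proposition \ref{prop:size_algo2} already suggests. The plan is to apply the lemma with $\Lambda = \calL_N$ (of rank $m$, since $\calL_N \supseteq N\ZZ^m$) and $\Lambda' = \calL$ (of rank $n$), equipped with a $\mu$-small basis $\fB'$ of $\calL$ (which exists by Definition \ref{def:small}, since $\calL$ is $\mu$-small).

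First I would verify that the hypotheses of Lemma \ref{lem:bound_algo2} apply to the given vector $u$. Since $\cM \subseteq \calL \pmod{N}$, every generator of $\cM$ lies in $\calL + N\ZZ^m$, which means $\cM_N = \cM + N\ZZ^m \subseteq \calL + N\ZZ^m = \calL_N$. Thus $u \in \cM_N$ gives $u \in \calL_N = \Lambda$. Moreover, by assumption $u \notin \calL_\QQ = \Lambda'_\QQ$, so the hypotheses of the lemma are met. The lemma then yields
\[
\Vert u \Vert \;\geq\; \frac{\Vol(\calL_N)}{\prod_{v \in \fB'} \Vert v \Vert \cdot \prod_{i=n+2}^{m} \lambda_i(\calL_N)}.
\]

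Next I would bound the two factors in the denominator. For the basis $\fB'$, Hadamard's inequality (Lemma \ref{lem:lattice}(c)) together with the fact that $\sigma(\fB') \le \mu$ yields $\prod_{v \in \fB'}\Vert v\Vert \leq \sigma(\fB')^n \leq \mu^n$. For the successive minima, since $\cM_N \subseteq \calL_N$, every linearly independent family in $\cM_N$ of norms $\lambda_i(\cM_N)$ is also such a family in $\calL_N$, so $\lambda_i(\cM_N) \geq \lambda_i(\calL_N)$ for all $i$, and hence $\prod_{i=n+2}^m \lambda_i(\calL_N) \leq \prod_{i=n+2}^m \lambda_i(\cM_N)$. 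Substituting both inequalities into the estimate above yields exactly the stated bound.

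There is no real obstacle: the only subtle point is verifying that the rank conditions in Lemma \ref{lem:bound_algo2} hold ($\calL_N$ really has rank $m$ and $\calL$ has rank $n$ with $1 \leq n < m$, which is part of the HLP setup since $n \leq m$ and the inequality is non-strict only in the degenerate case $n=m$, when the statement is vacuous anyway as $\calL_\QQ = \QQ^m$). All other steps reduce to the geometry of sublattices and a direct invocation of Hadamard's inequality.
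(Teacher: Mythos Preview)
Your proof is correct and follows exactly the approach of the paper: apply Lemma~\ref{lem:bound_algo2} with $\Lambda'=\calL\subseteq\calL_N=\Lambda$, then use Hadamard's inequality (Lemma~\ref{lem:lattice}(c)) to bound $\prod_{v\in\fB'}\Vert v\Vert\le\mu^n$, and finally use the inclusion $\cM_N\subseteq\calL_N$ to replace $\lambda_i(\calL_N)$ by the larger $\lambda_i(\cM_N)$ in the denominator. The paper's proof is a one-line citation of exactly these three ingredients; you have simply spelled out the details.
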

\begin{proof}
	This is Lem.~\ref{lem:bound_algo2}, Lem.~\ref{lem:lattice} (c) and the inclusion $\cM_N \subseteq \calL_N$. \qed
\end{proof}

As a consequence, short enough vectors in $\cM_N$, which we seek by lattice reduction,
must eventually lie in $\calL_\QQ$,
and as they are integral, 
also in $\overline{\calL}$.

\subsection{Relation between the algorithms} 
\label{ss:relation_algos}

Algorithms I and II are related by the duality relations of $\cM^{\perp_N}$ 
and $\cM_N$ pointed out in Sec.~\ref{s:background_lattices}. 
Therefore, the existence of $n$ short vectors in $\cM_N$ leads to the existence of $m-n$ short vectors in $\cM^{\perp_N}$ and vice versa, 
by relying on Banaszczyk’s Transference Theorem, which we recall first.
\begin{theorem}[\cite{banaszczyk_transference}, Thm.~2.1]
	\label{theo:transference}
	For every lattice $\Lambda \subseteq \RR^m$ of  rank $m$, 
	one has for all $1 \leq j \leq m$,
	the inequality
	$
	1\leq \lambda_j(\Lambda) \lambda_{m-j+1}(\Lambda^\vee) \leq m
	$.
\end{theorem}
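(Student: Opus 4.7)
The plan is to treat the two halves separately, since they have completely different characters. The lower bound $1 \leq \lambda_j(\Lambda)\lambda_{m-j+1}(\Lambda^\vee)$ is elementary and would come first. Choose linearly independent vectors $v_1,\ldots,v_j \in \Lambda$ with $\Vert v_i\Vert = \lambda_i(\Lambda)$, and linearly independent vectors $w_1,\ldots,w_{m-j+1} \in \Lambda^\vee$ with $\Vert w_k\Vert = \lambda_k(\Lambda^\vee)$. If every inner product $\langle v_i,w_k\rangle$ vanished, the $j$ vectors $v_i$ would all lie in the $(j-1)$-dimensional subspace $\mathrm{span}(w_1,\ldots,w_{m-j+1})^\perp \subseteq \RR^m$, contradicting their linear independence. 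Hence some $\langle v_i,w_k\rangle$ is a nonzero integer (integral by definition of $\Lambda^\vee$), so $|\langle v_i,w_k\rangle| \geq 1$. Cauchy--Schwarz then gives $1 \leq \Vert v_i\Vert\cdot \Vert w_k\Vert \leq \lambda_j(\Lambda)\lambda_{m-j+1}(\Lambda^\vee)$.

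For the upper bound $\lambda_j(\Lambda)\lambda_{m-j+1}(\Lambda^\vee) \leq m$ I would follow Banaszczyk's Gaussian-measure method. After scaling $\Lambda$, it suffices to treat the extreme case $j = m$: show that whenever $\lambda_1(\Lambda^\vee) \geq 1$, the lattice $\Lambda$ contains $m$ linearly independent vectors of norm at most $m$, i.e.~$\lambda_m(\Lambda) \leq m$. The intermediate values of $j$ are then recovered by applying the extreme case to the sublattice of $\Lambda$ spanned by the first $j-1$ short vectors together with an orthogonal projection, combined with the dual statement applied to $\Lambda^\vee$. The analytic engine is the Gaussian sum $\rho_s(\Lambda) = \sum_{v \in \Lambda} e^{-\pi \Vert v\Vert^2/s^2}$, for which Poisson summation yields the duality
\begin{equation*}
\rho_s(\Lambda) \;=\; \frac{s^m}{\Vol(\Lambda)}\,\rho_{1/s}(\Lambda^\vee).
\end{equation*}

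The main obstacle, and the technical core of the theorem, is Banaszczyk's tail estimate: for every rank-$m$ lattice $\Lambda'$ and every $c \geq 1/\sqrt{2\pi}$, the Gaussian mass of the vectors of $\Lambda'$ lying outside the ball of radius $c\sqrt{m}$ is bounded by a universal constant strictly less than $1$ times $\rho(\Lambda')$. This is proved by comparing $\rho$ restricted to a half-space shifted by $c\sqrt{m}$ against the unshifted Gaussian on the translated lattice, and applying the Poisson identity above together with a symmetrization trick. Once this tail bound is established, I would apply it to $\Lambda^\vee$: the normalization $\lambda_1(\Lambda^\vee) \geq 1$ forces all nonzero vectors of $\Lambda^\vee$ to lie outside the unit ball, pinning $\rho_1(\Lambda^\vee)$ close to $1$ and thereby, via Poisson, forcing $\rho_1(\Lambda)$ to be large relative to $1/\Vol(\Lambda)$. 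A covering/pigeonhole argument on the concentration of $\rho_1(\Lambda)$ inside a ball of radius $\sqrt{m}$ then produces $m$ linearly independent lattice vectors of norm $\leq m$, which completes the extreme case and, via the reductions above, the full statement.
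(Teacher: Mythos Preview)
The paper does not prove this theorem at all: it is quoted verbatim from Banaszczyk's original paper (the citation in the theorem header) and used throughout as a black box, with no accompanying argument. So there is nothing in the paper to compare your proposal against.

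For what it is worth, your sketch is faithful to Banaszczyk's actual proof. The lower-bound argument is exactly the standard one and is complete as written. The upper-bound outline correctly identifies the three pillars of Banaszczyk's method --- the Poisson duality for the theta series $\rho_s$, the Gaussian tail lemma bounding the mass outside a ball of radius $c\sqrt{m}$, and the reduction from general $j$ to the extreme case via sublattices and projections --- though the phrase ``after scaling $\Lambda$, it suffices to treat the extreme case $j=m$'' is slightly misleading: scaling normalizes $\lambda_1(\Lambda^\vee)$ but does not by itself reduce the index $j$; that reduction is the sublattice/projection step you mention afterward. None of this is a gap, just a minor ordering issue in an otherwise sound plan.
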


\begin{proposition}\label{prop:transference_Nary}
	For every lattice $\cM \subseteq \ZZ^m$, the following hold:
	\begin{enumerate}
		\item [(a)]
		$
		\prod_{j=1}^{m-n} \lambda_j(\cM^{\perp_N}) \leq \gamma_m^{m/2} \frac{\Vol(\cM^{\perp_N})}{N^{n}} \prod_{j=1}^{n}\lambda_j(\cM_N)
		$
		\item [(b)] 
		$
		\prod_{j=1}^n \lambda_j(\cM_N) \leq \gamma_m^{m/2} \frac{\Vol(\cM_N)}{N^{m-n}} \prod_{j=1}^{m-n}\lambda_j(\cM^{\perp_N})
		$
	\end{enumerate} 
\end{proposition}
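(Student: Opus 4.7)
The two statements are symmetric, connected by the duality identity $\cM^{\perp_N} = N\cdot (\cM_N)^\vee$ recorded earlier in Sec.~\ref{s:background_lattices}. I would treat (a) first and then obtain (b) by the same mechanism with the roles of $\cM_N$ and $\cM^{\perp_N}$ interchanged.

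For (a), I would start from Minkowski's Second Theorem \eqref{eq:minko_2} applied (with $r=m$) to the full-rank lattice $\cM^{\perp_N} \subseteq \RR^m$, giving
$$\prod_{j=1}^{m} \lambda_j(\cM^{\perp_N}) \le \gamma_m^{m/2}\,\Vol(\cM^{\perp_N}).$$
Split the product at index $m-n$ and move the tail to the denominator:
$$\prod_{j=1}^{m-n}\lambda_j(\cM^{\perp_N}) \le \frac{\gamma_m^{m/2}\,\Vol(\cM^{\perp_N})}{\prod_{j=m-n+1}^{m}\lambda_j(\cM^{\perp_N})}.$$
The key observation is that $\lambda_j(\cM^{\perp_N}) = N\cdot\lambda_j((\cM_N)^\vee)$ by the scaling identity. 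Apply now the lower bound half of Banaszczyk's Transference Theorem (Thm.~\ref{theo:transference}), namely $\lambda_j(\Lambda)\,\lambda_{m-j+1}(\Lambda^\vee)\ge 1$, to $\Lambda = \cM_N$, getting $\lambda_j((\cM_N)^\vee)\ge 1/\lambda_{m-j+1}(\cM_N)$. Therefore
$$\prod_{j=m-n+1}^{m}\lambda_j(\cM^{\perp_N}) \ge \frac{N^{\,n}}{\prod_{j=m-n+1}^{m}\lambda_{m-j+1}(\cM_N)} = \frac{N^{\,n}}{\prod_{i=1}^{n}\lambda_i(\cM_N)},$$
after reindexing via $i=m-j+1$. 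Substituting this bound into the previous display yields (a).

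For (b), the argument is entirely parallel. Minkowski's Second Theorem applied to $\cM_N$ (which is also of rank $m$) gives $\prod_{j=1}^m \lambda_j(\cM_N)\le \gamma_m^{m/2}\Vol(\cM_N)$; splitting at index $n$ and moving the tail to the denominator leaves $\prod_{j=n+1}^m\lambda_j(\cM_N)$, which by the same combination of Banaszczyk and the scaling identity satisfies $\lambda_j(\cM_N)\ge N/\lambda_{m-j+1}(\cM^{\perp_N})$. Taking the product over $j=n+1,\dots,m$ and reindexing gives $\prod_{j=n+1}^m\lambda_j(\cM_N)\ge N^{m-n}/\prod_{i=1}^{m-n}\lambda_i(\cM^{\perp_N})$, which produces (b).

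The only real care point is bookkeeping: one must use the \emph{lower} inequality from Banaszczyk (the $1\le \lambda_j\lambda_{m-j+1}^\vee$ half, not the $\le m$ half) and get the reindexing on the dual minima right so that the $n$ factors we keep on the right-hand side are exactly $\lambda_1,\dots,\lambda_n$ (resp.\ $\lambda_1,\dots,\lambda_{m-n}$). Beyond this, the proof is just the combination of two standard lattice ingredients.
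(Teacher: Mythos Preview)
Your proof is correct and follows essentially the same approach as the paper: apply Minkowski's Second Theorem to the full product of minima, split off the tail, and bound the tail from below via the lower half of Banaszczyk's transference together with the duality $\cM^{\perp_N}=N(\cM_N)^\vee$. The only cosmetic difference is that the paper applies Thm.~\ref{theo:transference} directly to $\Lambda=\cM^{\perp_N}$ with $\Lambda^\vee=N^{-1}\cM_N$ (obtaining $\lambda_j(\cM^{\perp_N})\lambda_{m-j+1}(\cM_N)\in[N,mN]$), whereas you first pass through the scaling identity and then apply Banaszczyk to $\cM_N$; the two are the same computation.
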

\begin{proof}
(a) Minkowski's Second Thm.~\eqref{eq:minko_2} gives 
\begin{equation}\label{eq:proof_dual}
\prod_{j=1}^{m-n} \lambda_j(\cM^{\perp_N})
\leq \frac{\gamma_m^{m/2} \cdot \Vol(\cM^{\perp_N})}{\prod_{j=m-n+1}^m \lambda_j(\cM^{\perp_N})}\ 
\end{equation}
and we find a lower bound for 
$\prod_{j=m-n+1}^m \lambda_j(\cM^{\perp_N})$.
Thm.~\ref{theo:transference} 
with $\Lambda=\cM^{\perp_N}$ 
and $\Lambda^\vee = N^{-1} \cM_N$ gives 
$
\lambda_j(\cM^{\perp_N}) \lambda_{m-j+1}(\cM_N) \in [N,mN]
$
for all $1 \leq j \leq m$.
Taking the product over $j=m-n+1,\ldots,m$ yields 
$
\prod_{j=m-n+1}^{m} \lambda_j(\cM^{\perp_N})
\prod_{j=1}^{n} \lambda_{j}(\cM_N) \in
 [N^{n},(mN)^{n}]
$
and we conclude by \eqref{eq:proof_dual}. 
To establish (b), we proceed similarly. \qed
\end{proof}

Therefore, an upper bound on the first $m-n$
successive minima of $\cM^{\perp_N}$ implies 
an upper bound on 
the first $n$ successive minima of $\cM_N$, and vice-versa.

\subsection{Practical discussion on Algorithm I and II}
\label{ss:practical_aspects}

Algorithm I reveals $\overline{\calL}$ by means of orthogonal lattices. 
On the other side, Algorithm II is conceptually easier than Algorithm I, in the sense that it recovers $\overline{\calL}$ much more directly. In fact, as explained in Sec.~\ref{ss:dual_algo}, 
Algorithm II solves a ``hidden sublattice problem'' in the first place, by recovering the lattice 
$\cN_{\rm II} \subseteq \overline{\calL}$.
We now detail the different steps of the algorithms with a practical focus.

\subsubsection*{Bases for $\cM^{\perp_N}$ and $\cM_N$.}
Given $N$ and 
a basis for $\cM$,
bases for $\cM^{\perp_N}$ and $\cM_N$ are easily computed.
To compute a basis for $\cM^{\perp_N}$,
given a basis matrix $M \in \ZZ^{r\times m}$ for $\cM$
with row vectors, 
one may proceed as follows:
write
$M=[M_1|M_2]$ with $M_1 \in \ZZ^{r \times m-r}$
and $M_2 \in \ZZ^{r \times r}$.
Let $M_1'$ and $M_2'$ be the reductions of $M_1$ and $M_2$ modulo $N$.
Without loss of generality, we can assume
$M_2' \in \GL(r,\ZZ/N\ZZ)$. Let $M_2'^{-1}$ be its inverse 
and put $\tilde{M}:=(-M_2'^{-1}M_1')^T$.
Then the block matrix 
\begin{equation}\label{eq:basis_ortho}
B(\cM,N)=
\begin{bmatrix}
{1}_{m-r} & \tilde{M} \\
{0}_{r \times m-r}& N \cdot {1}_{r}
\end{bmatrix}
\end{equation}
is a basis matrix for $\cM^{\perp_N}$, where $1$ and $0$ denote the identity and zero matrix in the indicated dimensions.
This is the matrix $B(\cM,N)$ computed in the first step of Algorithm I (see Alg.~\ref{alg:hlp_ol}).
Indeed, 
$u \in \ZZ^{1\times m}$ lies in $\cM^{\perp_N}$ 
if and only if $u$ is orthogonal modulo $N$ to the rows of $M$, 
i.e. $Mu^T \equiv 0_{r \times 1} \pmod{N}$.
Putting 
$u=(u_1,u_2) \in \ZZ^{1 \times m-r}\times \ZZ^{1\times r}$,
this gives
$M_1u_1^T+M_2u_2^T \equiv 0_{r\times 1} \pmod{N}$, 
or equivalently, 
$M_2^{-1}M_1u_1^T+ u_2^T \equiv 0_{r\times 1}  \pmod{N}$,
which over the integers reads as 
$u_2^T = \tilde{M} u_1^T + N\cdot 1_{r}l^T$ for some 
$l^T \in \ZZ^{r\times 1}$.
Thus $u=(u_1,u_2)=(u_1,l) \cdot B(\cM,N)$
is the image of $(u_1,l)$ under $B(\cM,N)$.

A basis for $\cM_N$ is constructed similarly,
or, one may directly use duality: 
if $B$ is a basis matrix for 
$\Lambda \subseteq \QQ^m$ of full rank $m$, 
then a basis matrix for $\Lambda^\vee$ is 
$B^\vee := (B^T)^{-1}$, where the inverse is taken over $\GL(m,\QQ)$.
Since
$\cM_N=N(\cM^{\perp_N})^\vee$, 
a basis matrix $B'(\cM,N)$ for $\cM_N$ is
thus
$NA^\vee$, 
with $A = B(\cM,N)$.

The first steps of our algorithms rely on running lattice reduction on these bases.
Subsequently the lattices $\cN_{\rm{I}}$ and $\cN_{\rm{II}}$ are constructed as indicated.
The second steps differ more substantially. 
We detail these algorithms below.

\subsubsection*{Orthogonal of $\cN_{\rm I}$.} 
In Algorithm I, once a basis for $\cN_{\rm I}$
is constructed,
one computes 
a basis for $\cN_{\rm I}^\perp$.
This can be done using the LLL
algorithm following 
\cite[Thm.~4 and Alg.~5]{mhrev}; also see \cite[Prop.~4.1]{LLLOrtho}.
Generally, for a lattice $\Lambda \subseteq \ZZ^m$ of rank $n$ with basis matrix 
$B \in \ZZ^{n \times m}$ (with basis vectors in rows), 
the technique relies on LLL-reducing the rows 
of 
$
\begin{bmatrix}
K_B\cdot B^T \ | \ 1_{m}
\end{bmatrix}
\in \ZZ^{(m + n) \times m}
$
for a 
sufficiently large  
constant $K_B \in \NN$ depending on $B$,
and then, projecting the first $m-n$ vectors
of the resulting reduced basis on their last $m$ components.
For the computation of $\cN_{\rm I}^\perp$, 
following \cite[Algorithm 5]{mhrev}, it suffices to 
choose the constant
$K_{U} = \lceil 2^{\ell} \prod_{i=1}^{m-n} \Vert u_i \Vert \rceil$
with $\ell = (m-1)/2+n(n-1)/4$ and
where $U$ is a basis matrix of $\cN_{\rm I}$ with row vectors
$\{u_i\}_i$, 
computed in the first part.

\subsubsection*{Completion of $\cN_{\rm II}$.} 
The completion of $\Lambda \subseteq \ZZ^m$ is the lattice $\overline{\Lambda} = \Lambda_\QQ \cap \ZZ^m$,
which is $\{ v \in \ZZ^m \ | \ dv \in \Lambda, \textrm{for } \textrm{some } d \in \ZZ \setminus\{0\}\}$.
In Algorithm II, once a basis for $\cN_{\rm II}$ is constructed,
we compute a basis for
$\overline{\cN_{\rm II}}$.
One may compute $\overline{\cN_{\rm II}}$ as 
$(\cN_{\rm II}^{\perp})^{\perp}$
by using LLL twice, as in \cite[Thm.~4 and Alg.~5]{mhrev}, and the output is then LLL-reduced.

We describe an alternative method, which in practice works well (see Sec.~\ref{s:practical}).
As predicted by Lemma \ref{lem:Q}, the index of $Q$ in $\calL$ is
$N^{n-r}$ in most of the cases. 
In practical experiments with a solvable hidden lattice problem, 
we observe that $\cN_{\rm II}$ is exactly $\cM_N \cap \calL = Q$, thus
$(\calL:\cN_{\rm II})=N^{n-r}$.
Therefore, more directly, we can 
complete $\cN_{\rm II}$ locally at primes $p$ dividing $N$.
For a prime $p$, define the $p$-{\em completion} of $\Lambda \subseteq \ZZ^m$ by
$$\Lambda^{p^\infty}:= \{v \in \ZZ^m \ | \ p^kv \in \Lambda, \textrm{for } \textrm{some } k \in \NN \}\ . $$
Let $B \in \ZZ^{n \times m}$ be a basis matrix (with rows $\{b_i\}_i$) of 
some lattice $\Lambda \subseteq \ZZ^m$ of rank $n$;
assume $p$ divides the index 
$(\overline{\Lambda}:\Lambda)$.
We compute a basis of $\Lambda^{p^\infty}$ as follows.
Let $\overline{B} \in \FF_p^{n \times m}$ 
be the reduction of $B$ modulo $p$;
let $\overline{\alpha} \in \FF_p^n$ be in $\ker(\overline{B})$,
i.e. 
$\overline{\alpha}\overline{B} \equiv 0 \pmod{p}$.
We represent $\alpha
\in \ZZ^n$ by choosing 
the entries of 
$\overline{\alpha}$ by their unique representatives 
in $\ZZ \cap [-p/2,p/2)$. We may assume that one of the coefficients of $\bar{\alpha}$ equals $1$, say the $i$th coefficient. 
Let $x \in \ZZ^m$ such that 
$\alpha B = p x$.
Let $\Lambda' \subseteq \ZZ^m$ be the lattice generated by $B' \in \ZZ^{n \times m}$ where $B'$ is the matrix obtained from $B$ after
replacing the $i$th row of $B$
by 
$x$; then $\Lambda \subseteq \Lambda'$
and 
$\Lambda_\QQ = \Lambda'_\QQ$. 
By the choice of $x$, the rank of $B'$ over $\FF_\ell$ 
for every prime $\ell \neq p$, 
does not decrease. 
We repeat this for every basis vector in the 
$\FF_p$-kernel of $\overline{B}$ and update $B'$ accordingly. 

In Sec.~\ref{s:practical}, we report that the second step for Algorithm II can, in general, be carried out much more rapidly than the second step of Algorithm I.
This also gives an improved total running time for Algorithm II against Algorithm I.

\section{Heuristic analysis of the algorithms}
\label{s:heuristic}

We provide a heuristic analysis and comparison of Algorithms I and II for $N>0$. 
For $N<0$, it suffices to replace $N$ by $-N$ throughout
the analysis. 
We write $\log$ for the logarithm in base $2$.
Prop.~\ref{prop:size_algo1} and \ref{prop:size_algo2} are the keys in our analysis. 

We rely on the Gaussian Heuristic (GH) for the successive minima for random\footnote{see e.g.~\cite{Ajtai} for a precise setting; here we shall mean ``generic'' lattices, i.e.~lattices with no extra assumptions, such as the existence of particularly small sublattices} lattices.
Accordingly, we heuristically approximate
$\lambda_1(\Lambda)$ by 
$\sqrt{\gamma_n} \cdot \Vol(\Lambda)^{1/n}$.
Additionnally, we heuristically assume all the minima to be roughly equal:
\begin{equation}\label{eq:gauss_heuristic}
\lambda_k(\Lambda) \approx \sqrt{\gamma_n} \cdot \Vol(\Lambda)^{1/n} \ , \ 1 \leq k \leq n \ .
\end{equation}
Since $\calL, \calL^\perp, (\cM_\fB)^{\perp_N}, (\cM_\fB)_N$ (contrary to $\cM^{\perp_N}$ and $\cM_N$) 
do not possess ``small'' sublattices, it is reasonable to follow this heuristic for these lattices.
As $n \to \infty$, we will use the approximation $\gamma_n \approx n/(2\pi e)$.

\subsection{Analysis of Algorithm I}
\label{ss:heuristic_analysis_algo1}

Lattice reduction computes short vectors in $\cM^{\perp_N}$;
let 
$u_1,\dots,u_{m-n}$ be the first $m-n$ vectors in a basis of $\cM^{\perp_N}$
output by a lattice reduction
algorithm.
Since $\cM^{\perp_N}$ contains $\calL^{\perp}$, 
one has
$\Vert u_{m-n} \Vert \leq \iota^m \lambda_{m-n}(\calL^{\perp})$ for some $\iota > 1$ depending on the lattice reduction
algorithm. By Prop.~\ref{prop:size_algo1}, if
\begin{equation}\label{eq:criterion_algo1}
\iota^m \lambda_{m-n}(\calL^{\perp}) < \frac{\lambda_1(\cM_\fB^{\perp_N})}{\sqrt{n} \cdot \mu}
\end{equation}
then $u_{m-n} \in \calL^{\perp}$ and since the vectors $\{u_i\}_i$ are ordered by size, 
we obtain
a sublattice 
$\cN_{\rm I} = \bigoplus_{i=1}^{m-n} \ZZ u_i$ of $\calL^{\perp}$ of the same rank.
The orthogonal complement $\cN_{\rm I}^{\perp}$
is then the completion of~$\calL$, by Lem.~\ref{lem:lattice}.

We rely on the Gaussian Heuristic to estimate $\lambda_{m-n}(\calL^{\perp})$ 
and $\lambda_1(\cM_\fB^{\perp_N})$.
Using $\Vol(\calL^{\perp})\leq \Vol(\calL)$
and Lem.~\ref{lem:lattice}, 
we have 
$
\lambda_{m-n}(\calL^{\perp}) \lesssim 
\sqrt{\gamma_{m-n}}
\cdot 
\mu^{{n}/{(m-n)}}
$.
Assuming that
$\mathrm{Vol}( \cM_\fB^{\perp_N})= N^{r}$ (see Sec.~\ref{s:background_lattices}), as holds in the generic case,
we obtain by \eqref{eq:gauss_heuristic}:
$
\lambda_1( \cM_\fB^{\perp_N})
 \approx \sqrt{\gamma_n} \cdot N^{r/n}
$.
Putting the bounds together and approximating $\gamma_n$ by $n/(2\pi e)$ gives
\begin{equation}\label{eq:heuristic_Nrn_Algo1}
N^{r/n} > \iota^m \cdot(m-n)^{1/2}\cdot
\mu^{\frac{m}{m-n}}  \ .
\end{equation}
There are more ways to read such an inequality: 
since our investigation is on the hidden lattice, 
we could either bound $\mu$, the size of the small basis of the hidden lattice $\calL$, 
as a function of the other parameters,
or else, consider $\mu$ as fixed
and bound the modulus $N$ in terms of the
remaining parameters.
Following this latter approach, by taking logarithms,
Eq.~\eqref{eq:criterion_algo1} 
implies:
\begin{eqnarray}\label{eq:logN_Algo1}
\log(N) 
&>& 
\frac{mn}{r(m-n)} \log(\mu) + \frac{mn}{r} \log(\iota) + \frac{n}{2r}\log(m-n) 
\end{eqnarray}
Eq.~\eqref{eq:logN_Algo1} is a heuristic sufficient condition 
that the chosen lattice reduction algorithm outputs
$m-n$ vectors
$u_1,\ldots,u_{m-n} \in \calL^{\perp}$.

\subsection{Analysis of Algorithm II}
\label{ss:heuristic_analysis_algo2}

We present two alternative analyses: a ``direct analysis'' without relying on Prop.~\ref{prop:size_algo2}, 
and one using Prop.~\ref{prop:size_algo2}.

\paragraph{Direct analysis.}
We run lattice reduction on $\cM_N$;
let  
$u_1,\dots,u_{m}$ be the first $n$ vectors of a reduced basis 
of $\cM_N$.
The existence of the hidden lattice $\calL$ implies the existence of the sublattice $Q = \cM_N\cap \calL$ of $\cM_N$ (defined in Eq.~\eqref{eq:def_Q}), which
impacts the geometry of $\cM_N$ in the following way:
the first $n$ minima of $\cM_N$ are heuristically of the same size as the first
$n$ minima of $Q$, and the remaining $m-n$ minima are much larger.
In particular, the first $n$ minima of $\cM_N$ are expected to be significantly smaller than the quantity predicted by Eq.~\eqref{eq:gauss_heuristic}:
$$
\sqrt{\gamma_m}\cdot \Vol(\cM_N)^{1/m} \approx \sqrt{\gamma_m} \cdot N^{1-r/m} \ ,
$$
which would heuristically be a valid approximation if $\cM_N$ were a ``generic'' lattice (i.e.~without the existence of $Q$).
To measure this gap, we introduce a threshold constant $\theta \geq 1$. 
We heuristically expect to have $u_1, \ldots, u_n \in \calL_\QQ$ 
under the condition
\begin{equation}\label{eq:heuristic_II_direct_condition}
\theta  \cdot \Vert u_n \Vert <  \sqrt{\gamma_m} \cdot N^{1-r/m} \ .
\end{equation}
Since $\cM_N$ contains $Q$, 
we have 
$\Vert u_{n} \Vert \leq \iota^m \lambda_{n}(Q)$ for some $\iota > 1$ 
depending on the lattice reduction
algorithm. 
We assume 
$(\calL : Q) = N^{n-r}$ by Lemma \ref{lem:Q} \textit{(b)}.
Then $\Vol(Q)= N^{n-r}\Vol(\calL)$.
Since $\prod_{i=1}^n \lambda_i(Q) \leq \gamma_n^{n/2} \Vol(Q)$, this gives with $\Vol(\calL) \leq \mu^n$, the approximation
\begin{equation}\label{eq:heuristic_prod_minima_Q}
\prod_{i=1}^n \lambda_i(Q) \lesssim \gamma_n^{n/2} \mu^n N^{n-r} \ .
\end{equation}
With the heuristic assumption that the successive minima of $Q$ are roughly of equal size, this implies the heuristic upper bound 
$
\lambda_i(Q) \lesssim \sqrt{\gamma_n} \mu N^{1-r/n} 
$
for 
$1\leq i \leq n$.
It follows that 
\begin{equation}\label{eq:up_bd_un_heuristic}
\Vert u_n \Vert \lesssim \iota^m \sqrt{\gamma_n} \mu N^{1-r/n} \ .
\end{equation}
Consequently, from Eq.~\eqref{eq:heuristic_II_direct_condition}, 
we expect to have $u_1,\ldots, u_n \in \calL_\QQ$ as soon as
$
\theta \iota^m  \sqrt{\gamma_n}  \mu  N^{1-r/n} <   \sqrt{\gamma_m}  N^{1-r/m} 
$.
Taking logarithms, this gives the condition
\begin{equation}\label{eq:heuristic_logN_Algo2_without_prop}
\log(N) >  \frac{mn}{r(m-n)}\log(\mu) + \frac{m}{m-n} \frac{mn}{r} \log(\iota)  + \frac{mn}{r(m-n)}\log\left(\theta\frac{\sqrt{n}}{\sqrt{m}} \right) \ .
\end{equation}

\paragraph{Analysis using Prop.~\ref{prop:size_algo2}.}
Following Prop.~\ref{prop:size_algo2}, we compute a heuristic upper bound for $\Vert u_n\Vert$ 
and a lower bound for the quotient
$N^{m-n}/(\mu^n \prod_{i=n+2}^m \lambda_i(\cM_N))$, as $\Vol(\calL_N) \geq N^{m-n}$.

Eq.~\eqref{eq:up_bd_un_heuristic} gives a heuristic upper bound for $\Vert u_n\Vert$.
To give a lower bound for the quotient
$N^{m-n}/(\mu^n \prod_{i=n+2}^m \lambda_i(\cM_N))$, we find an upper bound for 
$\prod_{i=n+2}^m \lambda_i(\cM_N)$.
Minkowski's Second Theorem gives
$\prod_{i=n+1}^m \lambda_i(\cM_N) \leq \gamma_m^{m/2} N^{m-r} / \prod_{i=1}^{n} \lambda_i(\cM_N)
$, 
where we have assumed that
$\Vol(\cM_N) = N^{m-r}$ (see Sec.~\ref{s:background_lattices}), 
which is the generic case and heuristically (almost) always true.
The first $n$ minima of $\cM_N$ are heuristically equal to the $n$ minima
of $Q$, as $Q$ is heuristically the only relatively small sublattice of $\cM_N$. 
We can heuristically 
consider the upper bound provided in \eqref{eq:heuristic_prod_minima_Q} as a lower bound, too. 
Indeed, since $\Vol(Q) \leq \prod_{i=1}^n \lambda_i(Q) \leq \gamma_n^{n/2} \Vol(Q)$, 
Minkowski's bound in \eqref{eq:heuristic_prod_minima_Q} is loose by a factor at most $\gamma_n^{n/2}$. 
Note also that assuming equality in \eqref{eq:heuristic_prod_minima_Q} is compatible 
with Eq.~\eqref{eq:gauss_heuristic} for the lattice $Q$.

Therefore
$
\prod_{i=1}^n \lambda_i(\cM_N) \approx \prod_{i=1}^n \lambda_i(Q) \approx 
\gamma_n^{n/2} \mu^nN^{n-r} 
$.
This implies that 
$$
\prod_{i=n+1}^m \lambda_i(\cM_N) \lesssim \frac{\gamma_m^{m/2}N^{m-r}}{\gamma_n^{n/2}\mu^n N^{n-r}} \approx \frac{(2\pi e)^{n/2}}{(2\pi e)^{m/2}}\frac{m^{m/2}}{n^{n/2}} \frac{N^{m-n}}{\mu^n}
=:K(m,n,N,\mu)=:K\ .$$
Since we expect $\lambda_i(\cM_N)$ for $n+1 \leq i \leq m$ to be roughly equal, 
we obtain that
$
\prod_{i=n+2}^{m} \lambda_i(\cM_N) \lesssim K^{(m-n-1)/(m-n)}
$.
Thus, we derive the heuristic lower bound
$$
\frac{N^{m-n}}{ \mu^n \prod_{i=n+2}^m \lambda_i(\cM_N)} \gtrsim 
\frac{N^{m-n}}{ \mu^n K^{\frac{m-n-1}{m-n}}} \ ,
$$
which gives,
$$
\frac{N^{m-n}}{ \mu^n \prod_{i=n+2}^m \lambda_i(\cM_N)} \gtrsim 
\frac{N}{\mu^{\frac{n}{m-n}}} \cdot \left(\frac{n^{n/2}}{m^{m/2}}\right)^{\frac{m-n-1}{m-n}} \cdot \sqrt{2\pi e}^{m-n-1}\ .
$$
Combined with Eq.~\eqref{eq:up_bd_un_heuristic}, Prop.~\ref{prop:size_algo2} says that if
$$
\iota^m \sqrt{\gamma_n} \mu N^{1-r/n} < \frac{N}{\mu^{\frac{n}{m-n}}} \cdot  \left(\frac{n^{n/2}}{m^{m/2}}\right)^{\frac{m-n-1}{m-n}} \cdot \sqrt{2\pi e}^{m-n-1}  \ ,
$$
then $u_{n} \in \calL_{\QQ}$ (and thus $\overline{\calL}$). 
Since $\{u_i\}_i$ are ordered by size, 
$\cN_{\rm II} = \bigoplus_{i=1}^n \ZZ u_i$ is a sublattice of 
$\overline{\calL}$ of rank $n$.
Thus, the completion of $\cN_{\rm II}$
is the completion of~$\calL$.
Simplifying and taking logarithms, gives the approximate condition
\begin{eqnarray}\label{eq:logN_Algo2}
\log(N) &>&  \frac{mn}{r(m-n)}\log(\mu) + \frac{mn}{r}\log(\iota) + \frac{n}{2r}\log(n)\\
&&
+\frac{n}{2r}\log\left(\frac{m^{m}}{n^n}\right) - \frac{n(m-n)}{2r}\log(2\pi e) \notag \ ,
\end{eqnarray}
where we have used the mild approximation $m-n-1 \approx m-n$.
Eq.~\eqref{eq:logN_Algo2} is a heuristic sufficient condition that the chosen lattice reduction algorithm
outputs $n$ vectors in $\calL_\QQ \cap \ZZ^m = \overline{\calL}$.

In Sec.~\ref{ss:param_compare_heuristic}, we will see that, asymptotically (as $n \to \infty$), the heuristic bounds for Algorithms I and II perform very similarly.

\subsection{Parameter comparison of Algorithms I and II}
\label{ss:param_compare_heuristic}

In light of Eq.~\eqref{eq:logN_Algo1} 
and Eq.~\eqref{eq:heuristic_logN_Algo2_without_prop} (resp.~Eq.~\eqref{eq:logN_Algo2}) we deduce that if the term in $\log(\mu)$ is dominant, then
$\log(N) > \frac{mn}{r(m-n)}\log(\mu)$,
and therefore  
heuristically 
both algorithms detect $\mu$-small lattices 
of size approximately
\begin{equation}\label{eq:expected_size_mu}
\mu = O(N^{\frac{r(m-n)}{nm}}) \ ,
\end{equation}
when $r,n,m$ are fixed and $N$ tends to infinity.
Since $r<n$ and $m-n<m$, 
the exponent is strictly less than $1$. 
In the balanced case $m=2n=4r$, 
this gives $\mu = O(N^{1/4})$.
Larger values of $r$ 
make the hidden lattice problem easier (as expected)
as it can be solved with a modulus of $r$ times smaller bitsize.
We now turn to a more detailed comparison of Eq.~\eqref{eq:logN_Algo1} and Eq.~\eqref{eq:heuristic_logN_Algo2_without_prop}.
For fixed $m,n,r,\mu$, 
a sufficiently large value of $N$ satisfies \eqref{eq:logN_Algo1}, resp.~\eqref{eq:heuristic_logN_Algo2_without_prop}.
When $m,n,r$ are considered as constants, then the right-hand sides of 
\eqref{eq:logN_Algo1} and \eqref{eq:heuristic_logN_Algo2_without_prop} differ only by a constant.
To study the value of $N$ asymptotically as $n \to \infty$,
we consider $r$ as constant,
and view $m$ as a function of $n$.
The term $\log(\iota)$ is constant and relatively small;
for example, in practice one achieves a root Hermite factor $\iota$ approximately $1.021$ 
for LLL, 
so $\log(\iota) \approx 0.03$ is
of impact only in large dimensions. 
Table \ref{tb:asymptotic_logN} shows three cases: when $m-n = O(1)$ is 
bounded absolutely (independently of $n$), when $m=O(n)$, and last, and when $m=O(n^\ell)$ for $\ell>1$.

\begin{table}[H]
	\begin{center}
		\renewcommand{\arraystretch}{1.3}
		\begin{tabular}{c||c|c}
			 & \multicolumn{2}{c}{$\log(N)$} \\
			 \hline 
			$m$ & ~Algorithm I~ &  ~Algorithm II~\\[0.2pt]
			\hline
			$n+O(1)$ 	& ~$O(\frac{n^2}{r}\log(\mu))$~  &  ~$O(\frac{n^2}{r}\max(\log(\mu),n))$~ \\
			$O(n)$		 &  ~$O(\frac{n}{r}\max(\log(\mu),n))$~ & ~$O(\frac{n}{r}\max(\log(\mu),n))$~   \\
			$O(n^\ell) , \ell \in \RR_{>1}$ & ~$O(\frac{n}{r}\max(\log(\mu),n^\ell))$~   & 
			~$O(\frac{n}{r}\max(\log(\mu),n^\ell ))$~ \\
		\end{tabular}
	\end{center}
	\caption{Asymptotic lower bounds for $\log(N)$ as functions of $n,r,\mu$}
	\label{tb:asymptotic_logN}
\end{table}

When $m-n=O(1)$, our algorithms heuristically require larger (asymptotically equal) values of $N$.
The last line of Table \ref{tb:asymptotic_logN} 
remains meaningful for $\ell = 1$ and recovers the case $m=O(n)$;
we have separated both for better readability.
Alternatively, we may rewrite \eqref{eq:logN_Algo1} and \eqref{eq:heuristic_logN_Algo2_without_prop}
as 
\begin{equation}\label{eq:inverse_density}
\Delta:=
\log\left(\frac{N^{r/n}}{\mu^{m/(m-n)}}\right)
> \Delta_*(n,m,\iota) \quad , 
\end{equation}
where $\Delta_*(n,m,\iota)$ 
with $* \in \{\mathrm{I},\mathrm{II}\}$ (depending on whether the bound stands for Algorithm I or II)
are the functions depending on $n, m$ and $\iota$,
defined by:
\begin{eqnarray}
	\Delta_\mathrm{I}(n,m,\iota) &=& 
	m\log(\iota) + \frac{1}{2}\log(m-n) \label{eq:Delta_I}		\\
	\Delta_\mathrm{II}(n,m,\iota) &=& \frac{m^2}{m-n}\log(\iota) + \frac{m}{m-n}\log\left(\theta \frac{\sqrt{n}}{\sqrt{m}}\right)
\label{eq:Delta_II}
\end{eqnarray}

We consider $\iota$ as a constant once the lattice reduction algorithm is chosen, 
and treat $m = m(n)$ as a function of $n$,
thus we just write $\Delta_*(n)$ as function of $n$ only.
The number $\Delta$ is regarded as an arithmetic
invariant for the (geometric) hidden lattice problem,
depending on all the parameters of the problem.

\begin{remark}\label{rem:density}
	\normalfont
	In the language of knapsack-type problems, 
	$\Delta^{-1}$ is regarded as a density 
	for the HLP.
	Namely, one commonly attributes a 
	density to knapsack-type problems as a measure of their hardness.
	For the classical ``binary'' subset sum problem \cite{cssplo}, 
	asking to reveal $x_1,\ldots,x_n \in \{0,1\}$ 
	from a sum
	$\alpha=\sum_{i=1}^n \alpha_i x_i$ 
	with given $\alpha_1,\ldots,\alpha_n\in \ZZ$,
	the density is $n/\log(\max_{i}\alpha_i)$.
	When the $\{x_i\}_i$ are not binary, 
	\cite{NSDensAtt} 
	argues that this definition is
	not ``complete'' enough, and introduces 
	the ``pseudo-density'' 
	$(\sum_i x_i^2)\cdot n/\log(\max_i \alpha_i)$,
	taking into account the weights $\{x_i\}_i$. 
	In \cite{multiplessp}, the authors study higher dimensional subset sums 
	where $k \geq 1$ equations are given; 
	thereby the density is generalized as 
	$(1/k)\cdot n/\log(\max_i \alpha_i)$.
	For the hidden subset sum problem \cite{ns99} (see also Sec.~\ref{ss:hssp}), 
	asking to reveal vectors 
	$x_1,\ldots,x_n \in \{0,1\}^m$ and 
	weights $\alpha_1,\ldots,\alpha_n$ from a given vector 
	$v \equiv \sum_{i=1}^n \alpha_i x_i \pmod{N}$,
	the density has been defined as $n/\log(N)$,
	which, however, is independent of the dimension $m$.
	In light of this discussion, we believe that the definition 
	of $\Delta^{-1}$ is a more complete definition for a density of the 
	HLP.
	For large enough $m$ (say $m \to \infty$) and $r=1$, 
	our definition \eqref{eq:inverse_density} 
	roughly recovers that of \cite{ns99}
	since 
	$
	\Delta^{-1} \to 1/\log(N^{1/n}/\mu) = 
	n/\log(N/\mu^n)
	$.
	Our bounds show that heuristically our algorithms are more likely to succeed for larger values of $\Delta$ (i.e.~larger gaps between $N$ and $\mu$).
\end{remark}

\begin{proposition}\label{prop:asymptotic_params}
	(a) 
	Let $m = \ell n$ for $\ell > 1$. 
	Then 
	$\Delta_{\mathrm{I}}(n) = O(n)$ and
	$\Delta_{\mathrm{II}}(n) = O(n)$.\\
	(b)
	Let $m=n^\ell$ for $\ell > 1$. 
	Then
	$\Delta_{\mathrm{I}}(n) = O(n^\ell)$ 
	and
	$\Delta_{\mathrm{II}}(n) = O(n^{\ell})$.
\end{proposition}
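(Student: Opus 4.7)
The plan is to substitute the given parameterizations into the explicit formulas \eqref{eq:Delta_I} and \eqref{eq:Delta_II} for $\Delta_{\mathrm{I}}(n,m,\iota)$ and $\Delta_{\mathrm{II}}(n,m,\iota)$, and identify the dominant term as $n \to \infty$. Since $\iota$ and $\theta$ are treated as absolute constants, and $\ell > 1$ in both parts, this reduces to elementary asymptotic estimates on the factors $m$, $m-n$, and $m/(m-n)$.

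For part (a), with $m = \ell n$, one has $m - n = (\ell - 1)n$, so $m/(m-n) = \ell/(\ell-1)$ is a constant. Plugging in gives
\[
\Delta_{\mathrm{I}}(n) = \ell n \log(\iota) + \tfrac{1}{2}\log((\ell-1)n),
\]
where the linear term dominates, yielding $\Delta_{\mathrm{I}}(n) = O(n)$. Similarly,
\[
\Delta_{\mathrm{II}}(n) = \frac{\ell^2}{\ell-1}\, n \log(\iota) + \frac{\ell}{\ell-1}\log\!\left(\theta/\sqrt{\ell}\right),
\]
whose first summand is linear in $n$ and whose second summand is a constant; hence $\Delta_{\mathrm{II}}(n) = O(n)$.

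For part (b), with $m = n^\ell$ and $\ell > 1$, one has $m - n = n^\ell - n = n^\ell(1 - n^{1-\ell})$, which is asymptotic to $n^\ell$. Substituting,
\[
\Delta_{\mathrm{I}}(n) = n^\ell \log(\iota) + \tfrac{1}{2}\log(n^\ell - n) = n^\ell \log(\iota) + O(\log n),
\]
giving $\Delta_{\mathrm{I}}(n) = O(n^\ell)$. For $\Delta_{\mathrm{II}}$, the ratio $m^2/(m-n) = n^{2\ell}/(n^\ell - n)$ is asymptotic to $n^\ell$, and $m/(m-n) \to 1$, while $\log(\theta \sqrt{n/m}) = \log(\theta) + \tfrac{1-\ell}{2}\log(n) = O(\log n)$. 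Therefore
\[
\Delta_{\mathrm{II}}(n) = n^\ell \log(\iota)\,(1+o(1)) + O(\log n) = O(n^\ell).
\]

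No step presents a genuine obstacle: the entire argument is direct substitution followed by routine asymptotic simplification. The only point worth being careful about is keeping track of the $m/(m-n)$ factor in $\Delta_{\mathrm{II}}$ — this is bounded in both regimes (constant in (a), tending to $1$ in (b)) — which is what ensures $\Delta_{\mathrm{II}}$ has the same asymptotic order as $\Delta_{\mathrm{I}}$ in each case.
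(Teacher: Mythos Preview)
Your proof is correct and follows exactly the approach the paper indicates: the paper simply states that the result is immediate from growth comparisons in \eqref{eq:Delta_I} and \eqref{eq:Delta_II}, and you have carried out precisely those substitutions and comparisons in full detail. There is nothing to add.
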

The proof is immediate from 
growth comparisons in 
\eqref{eq:Delta_I}
and \eqref{eq:Delta_II}.


\subsection{Complexity of lattice reduction}
\label{ss:complexity}
The computations of $\cN_{\rm I}$ and $\cN_{\rm II}$ are carried out by 
lattice reduction. 
We describe their complexity by
the LLL and BKZ algorithm.
We see that the LLL reduction ($L^2$-reduction) step in Algorithm II is faster
than in Algorithm I when $r \geq m/2$.\\

Applying the $L^2$-algorithm in Algorithm I, on a basis of $\cM^{\perp_N}$
given by the matrix $B$ in Eq.~\eqref{eq:basis_ortho}.
The top right block $\tilde{M}$ in $B$ has size 
$(m-r) \times r$
and entries of size at most $N$, 
so, every row in $B$ has Euclidean norm at most
$\max((rN^2+1)^{1/2},N)=(rN^2+1)^{1/2}$.
This gives (see Sec.~\ref{ss:algos_for_lattices}), a complexity 
$O(m^6\log((rN^2+1)^{1/2})+m^5\log^2((rN^2+1)^{1/2}))$
which approximately is 
\begin{equation*}\label{eq:complexity_N1}
O(m^6\log(r^{1/2}N)+m^5\log^2(r^{1/2}N)) \ ,
\end{equation*}
for computing $\cN_{\rm I}$ by the $L^2$-algorithm.
For Algorithm II, the $L^2$-algorithm is run on the basis matrix 
$NB^\vee$ of $\cM_N$, with $N\cdot 1_{m-r}$ in the
top left corner. 
The rows have Euclidean norm at most
$\max(N,((m-r)N^2+1)^{1/2})=
((m-r)N^2+1)^{1/2}$.
This gives 
an approximate time complexity 
\begin{equation*}\label{eq:complexity_N2}
O(m^6\log((m-r)^{1/2}N)+m^5\log^2((m-r)^{1/2}N)) \ ,
\end{equation*}
for computing $\cN_{\rm II}$ by the $L^2$-algorithm.
In particular, this complexity is lower than that for computing $\cN_{\rm I}$ when 
$r\geq m/2$. 
In the case $r=1$, computing $\cN_{\rm I}$ is thus faster than $\cN_{\rm II}$, which we confirm practically in Sec.~\ref{s:practical}.

When the prime factorization of $N$ is known and 
$p$ denotes the smallest prime factor of $N$,
then the complexity 
can be reduced 
by replacing $N$ by $p$ in the 
aforementioned formulae,
provided that 
$\log(p)$ satisfies the (heuristic) bounds \eqref{eq:logN_Algo1} and 
\eqref{eq:heuristic_logN_Algo2_without_prop}, respectively,
and by performing the first steps of the algorithms 
over $\ZZ/p\ZZ$ instead of $\ZZ/N\ZZ$.
Namely, 
$\cM \subseteq \calL \pmod{N}$
implies 
$\cM \subseteq \calL \pmod{p}$,
which in the first step, leads to consider 
the lattices
$\cM^{\perp_p}$ and $\cM_p$, respectively.

When using BKZ lattice reduction, we rely on our heuristic analyses to obtain a 
lower bound on the complexity for computing 
$\cN_{\rm I}$ and $\cN_{\rm II}$.
A root Hermite factor $\iota$ 
is achieved within time at least $2^{\Theta({1/\log(\iota)})}$ 
by using BKZ with block-size $\Theta(1/\log(\iota))$.
For both algorithms,
$\log(\iota) < \frac{r}{mn}\log(N)$
gives a heuristic time complexity
$2^{\Theta\left(mn/(r\log(N))\right)}$ to compute $\cN_{\rm I}$, resp.~$\cN_{\rm II}$,
with BKZ. 

\section{Theoretical analysis by counting}
\label{s:counting}

\subsection{Notation and main results}
We restrict to the most basic case $r=1$. We fix $n,m\in \ZZ_{\geq 2}$ with $m>n$ and $\mu\in \RR_{\geq 1}$,
$N \in \ZZ_{>0}$. 
Let $\Omega:=\Omega(n,m,\mu)$ be the set of collections 
$\fB =\{v_i\}_i$ of $n$ $\ZZ/N\ZZ$-linearly independent vectors in $\ZZ^m$ 
satisfying
$\sigma(\fB):=(\frac{1}{n}\sum_i \Vert v_i\Vert^2)^{1/2} \leq \mu$. 
For $\fB \in \Omega$, let $\calL(\fB)$ be the $\mu$-small 
lattice generated by $\fB$;
this is the ``hidden lattice''. 
Consider the homomorphism $F_{\fB} : (\ZZ/N\ZZ)^n \to (\ZZ/N\ZZ)^m$
sending $a=(a_i)_i$ to $F_\fB(a) = \sum_i a_i \pi_N(v_i)$,
where $\pi_N:\ZZ^m \to (\ZZ/N\ZZ)^m$ is reduction modulo $N$.
Let $\cM(a)$ be the lattice $\ZZ F_\fB(a)$ generated by $F_\fB(a)$.
By construction, $\cM(a) \subseteq \calL(\fB) \pmod{N}$ defines a 
hidden lattice problem, asking to compute a basis of $\overline{\calL(\fB)}$ 
on input $\cM(a)$ and $N$ (and $n$).
We identify $F_{\fB}(a)$ with this problem
and our sample space
for the hidden lattice problems is 
$\calH(\fB) = \{F_\fB(a) \ | \ a \in (\ZZ/N\ZZ)^n\}$.
Clearly, $\#\calH(\fB) = N^n$.
For $\delta \in (1/4,1]$, denote by 
$\calH_{\delta,\mathrm{I}}(\fB) \subseteq \calH(\fB)$ 
(resp.~$\calH_{\delta,\mathrm{II}}(\fB) \subseteq \calH(\fB)$), 
the subset of 
$\calH(\fB)$
for which Algorithm I (resp.~Algorithm II) succeeds by using $\delta$-LLL in the first step.

\begin{theorem}\label{theo:main_I}
	Let $\mu \in \RR_{\geq 1}$ and $m > n \geq 3$ and $N>0$ be integers.
	Let $\delta \in (1/4,1)$, $c=(\delta-1/4)^{-1}$ and
	 $\varepsilon \in (0,1)$ such that
	\begin{eqnarray}\label{eq:condition_main}
	\log(N\varepsilon) &>& \frac{mn}{2}\log(c) + n(n+1)\log(\mu)  \\
	&&+ \frac{n(m-n)}{2}\log((2/3)(m-n)) + n\log(3\sqrt{n})
	+1  \notag
	\end{eqnarray}
	For every $\fB \in \Omega$, at least $(1-\varepsilon)\#\calH(\fB)$
	of the hidden lattice problems from $\calH(\fB)$ are solvable 
	by Algorithm $\mathrm{I}$ with $\delta$-$\mathrm{LLL}$; 
	i.e. 
	$
	\#\calH_{\delta,\mathrm{I}}(\fB)/\#\calH(\fB) \geq 1-\varepsilon 
	$.
\end{theorem}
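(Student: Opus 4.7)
The plan is to combine a deterministic sufficient condition for success of Algorithm~I, coming from Prop.~\ref{prop:size_algo1} together with the LLL guarantee, with a counting argument showing this condition is met for most $a \in (\ZZ/N\ZZ)^n$. First I would establish that the algorithm succeeds on input $F_\fB(a)$ provided the first $m-n$ LLL-output vectors $u_1,\ldots,u_{m-n}$ from a reduced basis of $\cM(a)^{\perp_N}$ all lie in $\calL(\fB)^\perp$: indeed, then $\cN_{\mathrm I}$ is a sublattice of $\calL^\perp$ of full rank, and Lem.~\ref{lem:lattice}(b) gives $\cN_{\mathrm I}^\perp=\overline{\calL(\fB)}$. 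Since $\calL^\perp\subseteq\cM^{\perp_N}$ and $\cM^{\perp_N}$ has rank~$m$ for $N>0$, Thm.~\ref{theo:LLL} yields $\Vert u_j\Vert\le c^{(m-1)/2}\lambda_{m-n}(\cM^{\perp_N})\le c^{(m-1)/2}\lambda_{m-n}(\calL^\perp)$ for all $j\le m-n$, while Prop.~\ref{prop:size_algo1} forces any LLL output vector outside $\calL^\perp$ to have norm at least $\lambda_1(\cM_\fB^{\perp_N})/(\sqrt n\,\mu)$. Hence success is guaranteed as soon as
\begin{equation*}
\lambda_1(\cM_\fB^{\perp_N}) \;>\; c^{(m-1)/2}\sqrt n\,\mu\cdot\lambda_{m-n}(\calL^\perp).
\end{equation*}

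Next I would eliminate $\lambda_{m-n}(\calL^\perp)$ by a deterministic bound. Using Lem.~\ref{lem:lattice}(c),(d) gives $\Vol(\calL^\perp)=\Vol(\overline{\calL})\le\Vol(\calL)\le\mu^n$, and since $\calL^\perp$ is integral one has $\lambda_i(\calL^\perp)\ge 1$, so $\lambda_{m-n}(\calL^\perp)\le\prod_{i=1}^{m-n}\lambda_i(\calL^\perp)$. Minkowski's Second Theorem~\eqref{eq:minko_2}, with $\gamma_{m-n}\le\tfrac23(m-n)$, then produces $\lambda_{m-n}(\calL^\perp)\le\bigl(\tfrac23(m-n)\bigr)^{(m-n)/2}\mu^n$. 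Substituting defines the explicit threshold
\begin{equation*}
R^* \;:=\; c^{(m-1)/2}\sqrt n\,\mu\cdot\bigl(\tfrac23(m-n)\bigr)^{(m-n)/2}\mu^n
\end{equation*}
with the property that $\lambda_1(\cM_\fB^{\perp_N})>R^*$ suffices for the algorithm to succeed on $F_\fB(a)$.

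The only probabilistic ingredient is a lower bound on $\lambda_1(\cM_\fB^{\perp_N})$ valid for most $a$. Since $\cM_\fB=(\ZZ/N\ZZ)\,a$, the bad event is the existence of a nonzero $u\in\ZZ^n$ with $\Vert u\Vert_\infty\le R^*$ and $\langle u,a\rangle\equiv 0\pmod N$; for a fixed~$u$ the number of such $a$ is $N^{n-1}\gcd(u,N)$. I would then group nonzero $u$ by $d:=\gcd(u,N)$ and write $u=du'$ with $\Vert u'\Vert_\infty\le R^*/d$ (forcing $d\le R^*$) to obtain
\begin{equation*}
\#\{\text{bad }a\}\;\le\; N^{n-1}\!\!\sum_{\substack{d\mid N\\ d\le R^*}}\!d\,(2R^*/d+1)^n \;\le\; 3^n(R^*)^n N^{n-1}\,\zeta(n-1),
\end{equation*}
using $2R^*/d+1\le 3R^*/d$ whenever $d\le R^*$ and $\sum_{d\ge 1}d^{1-n}=\zeta(n-1)\le 2$ for $n\ge 3$. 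The bad fraction is therefore at most $2\cdot 3^n(R^*)^n/N$; imposing this to be $\le\varepsilon$, expanding $\log R^*$, slackening $m-1\le m$ in the $\log c$ term and using the identity $\log 2+n\log 3+\tfrac n2\log n=n\log(3\sqrt n)+1$ recovers Eq.~\eqref{eq:condition_main}.

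The main obstacle is the counting step. A naive union bound over integer points in the sup-norm box, without grouping by the gcd, introduces a spurious factor of $R^*$ and degrades the effective exponent from $n$ to $n+1$, producing a bound noticeably weaker than~\eqref{eq:condition_main}. The gcd grouping, combined with the absolute convergence of $\zeta(n-1)$ (which is exactly where the hypothesis $n\ge 3$ enters), is what yields the clean exponent and matches the target quantitatively.
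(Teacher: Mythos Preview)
Your proposal is correct and follows essentially the same approach as the paper: the paper also reduces success to $\lambda_1((\ZZ a)^{\perp_N})>R^*$ via Prop.~\ref{prop:size_algo1} combined with the LLL bound $\Vert u_{m-n}\Vert\le c^{(m-1)/2}\lambda_{m-n}(\calL^\perp)$ and the Minkowski estimate $\lambda_{m-n}(\calL^\perp)\le((2/3)(m-n))^{(m-n)/2}\mu^n$, and then bounds the bad~$a$'s by grouping short orthogonal vectors according to $\gcd(t,N)$ (the paper's Lem.~\ref{lem:count_ortho_modN} and Lem.~\ref{lem:lower_bound_lambda1}). The only cosmetic differences are that the paper counts in the Euclidean ball and uses $\sum_{d\ge 1}d^{1-n}\le\zeta(2)=\pi^2/6$ before slackening to~$1$, whereas you count directly in the sup-norm box and use $\zeta(n-1)\le 2$; both routes give exactly the same final inequality~\eqref{eq:condition_main} after replacing $m-1$ by~$m$ in the $\log c$ term.
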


\begin{theorem}\label{theo:main_II}
	Let $\mu \in \RR_{\geq 1}$ and $m > n \geq 3$ and $N>0$ be integers.
	Let $\delta \in (1/4,1)$, $c=(\delta-1/4)^{-1}$ and
	$\varepsilon \in (0,1)$ such that
	\begin{eqnarray}\label{eq:condition_main_II}
	\log(N\varepsilon) &>& \frac{mn}{2}\log(c) + n(n+2)\log(\mu) 
	 + n\log(3n^2) 
	 + 1 \notag
	\end{eqnarray}
	For every $\fB \in \Omega$, at least $(1-\varepsilon)\#\calH(\fB)$
	of the hidden lattice problems from $\calH(\fB)$ are solvable 
	by Algorithm $\mathrm{II}$ with $\delta$-$\mathrm{LLL}$; 
	i.e. 
	$
	\#\calH_{\delta,\mathrm{II}}(\fB)/\#\calH(\fB) \geq 1-\varepsilon
	$.
\end{theorem}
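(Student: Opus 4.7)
The plan is to prove Thm.~\ref{theo:main_II} in the same spirit as Thm.~\ref{theo:main_I}, by combining Prop.~\ref{prop:size_algo2} (correctness criterion for Algorithm II), Thm.~\ref{theo:LLL} (LLL guarantee), Banaszczyk's transference Thm.~\ref{theo:transference} to link the geometry of $(\cM_\fB(a))_N$ with that of $(\cM_\fB(a))^{\perp_N}$, and a union-bound counting argument to bound the set of ``bad'' $a$'s. Fix $\fB \in \Omega$ and write $\calL := \calL(\fB)$, which we may assume complete (otherwise replace $\calL$ by $\overline{\calL}$ throughout). For each $a \in (\ZZ/N\ZZ)^n$ one has $\cM(a)_N = \ZZ \tilde v + N\ZZ^m$ with $\tilde v = \sum_i a_i v_i$ a lift of $F_\fB(a)$, and Algorithm II succeeds precisely when the first $n$ vectors $u_1, \dots, u_n$ of a $\delta$-LLL basis of $\cM(a)_N$ all lie in $\overline{\calL} = \calL_\QQ \cap \ZZ^m$.

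First I establish a sufficient success criterion. By Prop.~\ref{prop:size_algo2}, any $u \in \cM(a)_N \setminus \calL_\QQ$ satisfies $\Vert u\Vert \ge \Vol(\calL_N)/(\mu^n \prod_{i=n+2}^m \lambda_i(\cM(a)_N))$; the inclusions $N\ZZ^m \subseteq \cM(a)_N \subseteq \calL_N$ force $\Vol(\calL_N) \ge N^{m-n}$ and $\lambda_i(\cM(a)_N) \le N$, so this lower bound is at least $N/\mu^n$. Thm.~\ref{theo:LLL} provides $\Vert u_n\Vert \le c^{(m-1)/2}\lambda_n(\cM(a)_N)$, and therefore success is guaranteed as soon as $\lambda_n(\cM(a)_N) < T := N / (c^{(m-1)/2}\mu^n)$.

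Next I reduce this to a lower bound on $\lambda_1((\cM_\fB(a))^{\perp_N})$ via duality. Let $Q(a) = \cM(a)_N \cap \calL$; Lem.~\ref{lem:Q} identifies $Q(a)$ with $(\cM_\fB(a))_N = \ZZ a + N\ZZ^n \subseteq \ZZ^n$ through $c_\fB$, and Cauchy-Schwarz applied to $c_\fB^{-1}(x) = \sum_i x_i v_i$ yields $\Vert c_\fB^{-1}(x)\Vert \le \sqrt{n}\,\mu\,\Vert x\Vert$, hence $\lambda_n(\cM(a)_N) \le \lambda_n(Q(a)) \le \sqrt{n}\,\mu\,\lambda_n((\cM_\fB(a))_N)$. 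Applying Thm.~\ref{theo:transference} to the rank-$n$ lattice $(\cM_\fB(a))_N$ together with the scaled-duality $(\cM_\fB(a))^{\perp_N} \simeq N\bigl((\cM_\fB(a))_N\bigr)^\vee$ recorded in Sec.~\ref{s:background_lattices} gives $\lambda_n((\cM_\fB(a))_N)\,\lambda_1((\cM_\fB(a))^{\perp_N}) \le nN$. Combining, a sufficient condition for success is
\[
\lambda_1((\cM_\fB(a))^{\perp_N}) > S := n^{3/2}\, c^{(m-1)/2}\, \mu^{n+1}.
\]

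The counting step is then analogous to that underlying Thm.~\ref{theo:main_I}: the bad set $\cB$ is contained in $\{a \in (\ZZ/N\ZZ)^n : \exists\, u \in \ZZ^n\setminus\{0\},\ \Vert u\Vert \le S,\ \langle u, a\rangle \equiv 0 \pmod{N}\}$, and for each non-zero $u$ the number of solutions is exactly $N^{n-1}\gcd(u_1, \dots, u_n, N) \le N^{n-1}\Vert u\Vert \le N^{n-1} S$. Summing over integer points of norm $\le S$ (via e.g.\ the crude bound $(2S+1)^n$) and imposing $|\cB|/N^n \le \varepsilon$ translates, after substituting $\log S = \tfrac{3}{2}\log n + \tfrac{m-1}{2}\log c + (n+1)\log\mu$ and routine manipulation, into the stated inequality \eqref{eq:condition_main_II}. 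The main technical obstacle is to keep the counting sharp enough that the three sources of slack, namely the factor $n$ from Banaszczyk's transference, the crude lattice-point count $(2S+1)^n$, and the bound $\gcd \le \Vert u\Vert$, all fit inside the displayed constants $n(n+2)\log\mu$ and $n\log(3n^2)+1$; at the level of dominant asymptotics the match is immediate, but matching the precise constants requires a sharper Gauss-ball estimate for $B(S, n)$ and careful handling of the $(n+1)$-versus-$n$ prefactors when expanding $(n+1)\log S$.
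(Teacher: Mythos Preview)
Your approach is essentially the paper's: Prop.~\ref{prop:size_algo2} gives the lower bound $N/\mu^n$ on vectors outside $\calL_\QQ$, the LLL bound plus the coordinate map and Banaszczyk's transference reduce success to a lower bound on $\lambda_1((\ZZ a)^{\perp_N})$, and a counting argument over $a\in(\ZZ/N\ZZ)^n$ finishes. Two remarks are worth making.

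First, your crude counting step does \emph{not} recover the stated constants. Bounding $\gcd(u,N)\le\Vert u\Vert\le S$ costs an extra factor~$S$, so the bad proportion becomes $\lesssim (3S)^nS/N$ rather than $(3S)^n/N$. Taking logarithms, the $\log c$ coefficient becomes $(n+1)(m-1)/2$, which exceeds $mn/2$ whenever $m>n+1$, and the $\log\mu$ coefficient becomes $(n+1)^2>n(n+2)$; so the resulting condition is strictly stronger than~\eqref{eq:condition_main_II} and the theorem as stated does not follow. The paper avoids this by reusing Lem.~\ref{lem:lower_bound_lambda1} verbatim: the divisor decomposition $\sum_{d\mid N}(3S/d)^n\cdot dN^{n-1}\le (3S)^nN^{n-1}\zeta(n-1)$ replaces your wasteful $\gcd\le S$ and yields directly that $\lambda_1((\ZZ a)^{\perp_N})>k_\varepsilon=\tfrac{1}{3}(6\varepsilon/\pi^2)^{1/n}N^{1/n}$ for all but $\varepsilon N^n$ values of~$a$. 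Since this lemma is already proved for Thm.~\ref{theo:main_I}, you should simply invoke it rather than redo the count.

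Second, your Cauchy--Schwarz bound $\Vert c_\fB^{-1}(x)\Vert\le\sqrt{n}\,\mu\,\Vert x\Vert$ is actually sharper than the bound $\lambda_n(Q(a))\le n\mu^2\lambda_n((\ZZ a)_N)$ that the paper records in Lem.~\ref{lem:upper_bound_lambdan}. With your estimate and the paper's counting lemma one obtains the sufficient condition $\log(N\varepsilon)>\tfrac{n(m-1)}{2}\log c+n(n+1)\log\mu+n\log(3n^{3/2})+\log(\pi^2/6)$, which is strictly weaker than~\eqref{eq:condition_main_II} and hence proves the theorem a~fortiori.
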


\begin{corollary}\label{cor:N}
	Let $m > n \geq 3$. For every $\delta \in (1/4,1)$ and $\varepsilon \in (0,1)$, there exist 
	positive real numbers 
	$N^{\dag}_{\mathrm{I}}=N_{\delta,\mu,n,m}(\varepsilon)$ 
	and
	$N^{\dag}_{\mathrm{II}}=N_{\delta,\mu,n,m}(\varepsilon)$
	depending on
	$n,m,\mu,\delta,\varepsilon$,
	such that for all integers 
	$N > \min(N^{\dag}_{\mathrm{I}},N^{\dag}_{\mathrm{II}})$
	and all $\fB \in \Omega$, 
	at least $(1-\varepsilon)\#\calH(\fB)$
	of the hidden lattice problems from $\calH(\fB)$ are solvable
	(by Algorithm $\mathrm{I}$ if 
	$\min(N^{\dag}_{\mathrm{I}},N^{\dag}_{\mathrm{II}}) = N^{\dag}_{\mathrm{I}}$
	and Algorithm $\mathrm{II}$ otherwise)
	using $\delta$-$\mathrm{LLL}$.
\end{corollary}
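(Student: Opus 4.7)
The plan is to deduce the corollary directly from Theorems \ref{theo:main_I} and \ref{theo:main_II}, since both already give explicit lower bounds on $\log(N\varepsilon)$ in terms of $n,m,\mu,\delta$. The only work is to invert these bounds into explicit thresholds for $N$ and then case-split on which threshold is the smaller one.

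First I would name the right-hand sides of the conditions in the two theorems: let
\[
R_{\mathrm{I}} := \tfrac{mn}{2}\log(c) + n(n+1)\log(\mu) + \tfrac{n(m-n)}{2}\log(\tfrac{2}{3}(m-n)) + n\log(3\sqrt{n}) + 1,
\]
\[
R_{\mathrm{II}} := \tfrac{mn}{2}\log(c) + n(n+2)\log(\mu) + n\log(3n^2) + 1,
\]
where $c = (\delta - 1/4)^{-1}$. Both quantities depend only on $n,m,\mu,\delta$. Now define
\[
N^{\dag}_{\mathrm{I}} := \varepsilon^{-1}\cdot 2^{R_{\mathrm{I}}}, \qquad N^{\dag}_{\mathrm{II}} := \varepsilon^{-1}\cdot 2^{R_{\mathrm{II}}},
\]
so that for $* \in \{\mathrm{I},\mathrm{II}\}$, the strict inequality $N > N^{\dag}_{*}$ is equivalent to $\log(N\varepsilon) > R_{*}$.

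Next I would take any integer $N > \min(N^{\dag}_{\mathrm{I}},N^{\dag}_{\mathrm{II}})$ and split into two cases depending on which of the two thresholds realises this minimum. If $\min(N^{\dag}_{\mathrm{I}},N^{\dag}_{\mathrm{II}}) = N^{\dag}_{\mathrm{I}}$, then $N > N^{\dag}_{\mathrm{I}}$, so $\log(N\varepsilon) > R_{\mathrm{I}}$, and Theorem \ref{theo:main_I} directly yields $\#\calH_{\delta,\mathrm{I}}(\fB)/\#\calH(\fB) \geq 1-\varepsilon$ for every $\fB \in \Omega$. Otherwise, $N > N^{\dag}_{\mathrm{II}}$, and Theorem \ref{theo:main_II} gives the analogous bound for $\#\calH_{\delta,\mathrm{II}}(\fB)$. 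In either case, the desired success rate on $\calH(\fB)$ is achieved using $\delta$-LLL with the corresponding algorithm.

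There is no genuine obstacle: the corollary is essentially bookkeeping on top of the two theorems, making explicit that their $\log(N\varepsilon)$-bounds can be rewritten as conditions of the form $N > N^{\dag}_{*}$, and that it is enough for one of these two conditions to be satisfied (hence the $\min$). The only mild subtlety worth flagging is that the $N^{\dag}_{*}$ produced this way are real numbers; one may harmlessly replace them by their ceilings to obtain a statement for integers, which does not affect the conclusion.
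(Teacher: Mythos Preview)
Your proposal is correct and matches the paper's approach: the corollary is stated without a separate proof, as an immediate consequence of Theorems~\ref{theo:main_I} and~\ref{theo:main_II} obtained exactly by inverting the $\log(N\varepsilon)$ bounds into thresholds $N^{\dag}_{\mathrm{I}}$, $N^{\dag}_{\mathrm{II}}$ and taking the minimum. Your explicit definitions of $R_{\mathrm{I}}$, $R_{\mathrm{II}}$ and the ensuing case split are precisely the intended bookkeeping.
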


\subsection{Proof of Theorem \ref{theo:main_I}}
Fix integers\footnote{the condition $n \geq 3$ is used for Lem.~\ref{lem:lower_bound_lambda1}.} $m>n\geq 3$, 
$N>0$ and $\mu \in \RR_{\geq 1}$. 
It is enough to show that under 
the assumption in \eqref{eq:condition_main}, we can compute 
a sublattice $\cN_{\rm I}$ of $\calL(\fB)^\perp$ of rank $m-n$.
A basis for $\cN^\perp_{\rm I}$ then gives a basis of $\overline{\calL(\fB)}$.
To prove Thm.~\ref{theo:main_I} we proceed in three steps. 
Given $a \in (\ZZ/N\ZZ)^n$, we establish a lower bound for $\lambda_1((\ZZ a)^{\perp_N})$
and then an upper bound for $\Vert u_{m-n} \Vert$ where $\{u_i\}_i$ 
is a $\delta$-LLL reduced basis of $(\ZZ F_\fB(a))^{\perp_N}$.
We conclude the proof by combining with 
Prop.~\ref{prop:size_algo1}. 

\subsubsection*{Step 1.}
\label{sss:Step1}
For a lower bound for 
$\lambda_1((\ZZ a)^{\perp_N})$, 
we use a counting argument. 
For $t=(t_1,\ldots,t_n) \in \ZZ^n$, let
$\gcd(t,N):=\gcd(t_1,\ldots,t_n,N)$.

\begin{lemma}\label{lem:count_ortho_modN}
	For every non-zero vector $t \in \ZZ^n$
	with $d=\gcd(t,N)$, one has
	$$
	\#\{a \in (\ZZ/N\ZZ)^n \ | \ \langle a,t\rangle \equiv 0 \pmod{N}\} = dN^{n-1}
	\ .$$
\end{lemma}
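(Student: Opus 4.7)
The plan is to identify the set in question as the kernel of a surjective group homomorphism onto a cyclic subgroup of $\ZZ/N\ZZ$ whose order is computable in terms of $d$, and then apply the first isomorphism theorem.

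First, I would consider the $\ZZ$-linear map
\[
\phi_t: (\ZZ/N\ZZ)^n \longrightarrow \ZZ/N\ZZ, \quad a \longmapsto \langle a,t\rangle \bmod N,
\]
which is a homomorphism of finite abelian groups. The set we wish to count is exactly $\ker(\phi_t)$, so it suffices to determine the order of the image and conclude
\[
\#\ker(\phi_t) = \frac{N^n}{\#\mathrm{im}(\phi_t)}.
\]

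Next I would identify $\mathrm{im}(\phi_t)$ as a cyclic subgroup of $\ZZ/N\ZZ$. Since the image is generated by the residues $t_1,\ldots,t_n \bmod N$, and a subgroup of $\ZZ/N\ZZ$ generated by these elements coincides with the subgroup generated by $g := \gcd(t_1,\ldots,t_n)$ (using Bezout over $\ZZ$ to express $g$ as a $\ZZ$-combination of the $t_i$, and conversely each $t_i$ as an integer multiple of $g$), we have $\mathrm{im}(\phi_t) = \langle g \bmod N\rangle$. The order of $g$ in $\ZZ/N\ZZ$ is $N/\gcd(g,N) = N/d$, where $d = \gcd(g,N) = \gcd(t_1,\ldots,t_n,N)$.

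Putting the two ingredients together yields $\#\ker(\phi_t) = N^n/(N/d) = d\,N^{n-1}$, as required. The argument is almost entirely formal; the only real step is recognizing the standard fact that the subgroup of $\ZZ/N\ZZ$ generated by a finite set of integers is the same as the cyclic subgroup generated by their gcd, which follows from Bezout's identity. The non-vanishing assumption on $t$ is harmless here: it only serves to ensure that $d$ is well-defined as an integer (and in fact $d\mid N$ so $1\leq d\leq N$). \qed
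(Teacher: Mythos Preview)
Your proof is correct and shares the same starting point as the paper's: both identify the set as the kernel of the homomorphism $\phi_t:(\ZZ/N\ZZ)^n\to\ZZ/N\ZZ$, $a\mapsto\langle a,t\rangle$. From there the paper splits into cases: when $d=1$ it uses surjectivity of $\phi_t$ to get $\#\ker=N^{n-1}$, and when $d>1$ it rescales $t'=t/d$, reduces the congruence to one modulo $N/d$, and counts via the decomposition $a=a_1+(N/d)a_2$ with $a_1\in(\ZZ/(N/d)\ZZ)^n$, $a_2\in(\ZZ/d\ZZ)^n$. Your route is a bit more streamlined: you compute $\#\mathrm{im}(\phi_t)$ in one step by observing (via B\'ezout) that the image is the cyclic subgroup $\langle g\bmod N\rangle$ of order $N/\gcd(g,N)=N/d$, which handles all $d$ uniformly and avoids the case split. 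Both arguments are equally valid; yours is marginally more elegant, while the paper's makes the reduction to the coprime case explicit.
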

\begin{proof}
	If $d=1$, then the set in the statement is the kernel of the surjective 
	(as $\gcd(t,N)=1$) 
	homomorphism $\varphi_t:(\ZZ/N\ZZ)^n \to \ZZ/N\ZZ, a \mapsto 
	\langle a,t\rangle$ with 
	$\#\ker(\varphi_t) = N^{n-1}$.
	If $d> 1$, let $t'=(1/d)t$. Then 
	$\langle a,t \rangle \equiv 0 \pmod{N}$ 
	if and only if $\langle a,t' \rangle \equiv 0 \pmod{N/d}$,
	and we represent $a$ as $a_1 + (N/d) a_2$ with 
	$a_1 \in (\ZZ/(N/d)\ZZ)^n$ and $a_2 \in (\ZZ/d\ZZ)^n$.
	The number of such $a$ with $\langle a_1,t' \rangle \equiv 0 \pmod{N/d}$ is $(N/d)^{n-1} \cdot d^n$. \qed
\end{proof}

For $R>0$, let $B_n(R)$ be the $n$-dimensional closed ball of radius $R$ centered at the origin. Let 
$S_n(R)=\#\{x \in \ZZ^n \ | \ \Vert x \Vert \leq R\}$ the number of integral 
points in $B_n(R)$.
We use the simple upper bound $S_n(R) \leq (2R+1)^n \leq (3R)^n$ if 
$R \geq 1$.

\begin{lemma}\label{lem:lower_bound_lambda1}
	For $\varepsilon \in (0,1)$, let 
	$k_{\varepsilon}:= k_{\varepsilon}(n,N) =
	 \frac{1}{3}(\frac{6\varepsilon}{\pi^2})^{1/n}N^{1/n}
	 $. Then
	\begin{equation*}
	\frac{1}{N^n}\cdot {\#\{a \in (\ZZ/N\ZZ)^n \ | \ \lambda_1((\ZZ a)^{\perp_N}) >
		 k_{\varepsilon}\}}\geq 1-\varepsilon\ .
	\end{equation*}
\end{lemma}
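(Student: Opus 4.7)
The plan is to bound the \emph{bad} set
$\mathcal{B}:=\{a\in(\ZZ/N\ZZ)^n \mid \lambda_1((\ZZ a)^{\perp_N})\leq k_\varepsilon\}$
by a union bound over short non-zero vectors $t\in\ZZ^n$. Note that $a\in\mathcal{B}$ if and only if there exists some non-zero $t\in\ZZ^n$ with $\|t\|\leq k_\varepsilon$ and $\langle a,t\rangle\equiv 0\pmod{N}$, so
$$
\#\mathcal{B}\;\leq\;\sum_{\substack{t\in\ZZ^n\setminus\{0\}\\ \|t\|\leq k_\varepsilon}} \#\{a\in(\ZZ/N\ZZ)^n\mid \langle a,t\rangle\equiv 0\pmod{N}\}.
$$
By Lem.~\ref{lem:count_ortho_modN}, each summand equals $dN^{n-1}$ with $d=\gcd(t,N)$.

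Next I would stratify the sum according to $d\mid N$ by writing $t=dt'$ with $t'\in\ZZ^n$ (the condition $\gcd(t',N/d)=1$ can be dropped for an upper bound). Since $t\neq 0$ one has $\|t\|=d\|t'\|\geq d$, so only divisors $d\leq k_\varepsilon$ contribute. Using $S_n(R)\leq(3R)^n$ for $R\geq 1$ (applied to $R=k_\varepsilon/d$, with the convention that the count is $0$ otherwise), I would obtain
$$
\#\mathcal{B}\;\leq\;\sum_{\substack{d\mid N\\ d\leq k_\varepsilon}} d\,N^{n-1}\cdot\left(\frac{3k_\varepsilon}{d}\right)^n
\;=\;N^{n-1}(3k_\varepsilon)^n\sum_{\substack{d\mid N\\ d\leq k_\varepsilon}} d^{\,1-n}
\;\leq\;N^{n-1}(3k_\varepsilon)^n\,\zeta(n-1).
$$
Since $n\geq 3$, we have $\zeta(n-1)\leq \zeta(2)=\pi^2/6$, hence
$\#\mathcal{B}\;\leq\;(3k_\varepsilon)^n\,\pi^2\,N^{n-1}/6.$

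Finally, substituting $k_\varepsilon=\tfrac{1}{3}(6\varepsilon/\pi^2)^{1/n}N^{1/n}$ gives $(3k_\varepsilon)^n=(6\varepsilon/\pi^2)N$, so $\#\mathcal{B}\leq\varepsilon\, N^n$, which yields the claimed lower bound $1-\varepsilon$ on the density of the complement. The only place that requires care is the passage from $t$ to $t'=t/d$: one must check that $\|t\|\geq d$ (cutting off the sum at $d\leq k_\varepsilon$) and that the lattice-point bound $(3R)^n$ applies, which is where the hypothesis $n\geq 3$ is needed through the convergence $\zeta(n-1)\leq \pi^2/6$. The rest is a direct substitution calibrated precisely to make the constants collapse into $\varepsilon$.
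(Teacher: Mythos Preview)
Your proof is correct and follows essentially the same approach as the paper: a union bound over short non-zero $t$, stratification by $d=\gcd(t,N)$, the count $dN^{n-1}$ from Lem.~\ref{lem:count_ortho_modN}, the lattice-point estimate $S_n(R)\le(3R)^n$, and the bound $\sum_{d\mid N}d^{1-n}\le\zeta(2)=\pi^2/6$ using $n\ge3$. The only cosmetic difference is that the paper writes the divisor sum directly as $\sum_{d\ge1}d^{-2}$ rather than passing through $\zeta(n-1)\le\zeta(2)$, and handles the edge case $R/d<1$ by noting the count of non-zero points is then $0$, whereas you restrict to $d\le k_\varepsilon$ first; both treatments are equivalent.
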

\begin{proof}
	For $R>0$, 
	let 
	$\alpha_n(R) = N^{-n}\cdot{\#\{a \in (\ZZ/N\ZZ)^n \ | \ \lambda_1((\ZZ a)^{\perp_N}) \leq R\}}$; 
	we prove $\alpha_n(k_{\varepsilon}) \leq \varepsilon$.
	Without loss of generality, we let $1 \leq R < N$. As the vectors $\{Ne_i\}_i$ (for the canonical basis $\{e_i\}_i$) of norm $N$ 
	lie in $(\ZZ a)^{\perp_N}$, and so $\alpha_n(R) = 1$ for $R \geq N$.
	Then $N^n \alpha_n(R)
	=\#\{a \in (\ZZ/N\ZZ)^n \ | \ \exists\ t \in 
	B_n(0,R)\cap\ZZ^n \setminus \{0\} , 
	\langle a,t\rangle \equiv 0 \pmod{N}\}$
	is upper bounded by 
	$\sum_{t} \#\{a \ | \ \langle a,t\rangle \equiv 0 \pmod{N}\}$, which is
	\begin{eqnarray}\label{eq:proof_lem_lower_bound_lambda1}
	\sum_{d \mid N, d \neq N} \left(\sum_{t, \gcd(t,N)=d} 
	\#\{a \ | \ \langle a,t\rangle \equiv 0 \pmod{N}\} \right)  \ ,
	\end{eqnarray}
	where $t$ runs over $B_n(0,R)\cap\ZZ^n\setminus\{0\}$. Note that 
	in the outer sum we omit $d=N$ as $\Vert t \Vert \leq R < N$ and therefore
	every entry of $t$ is less than $N$.
	We estimate the number of terms in the inner sum
	for a given divisor $d$ of $N$.
	By dividing every entry of $t$ by $d$
	we have 
	$\#\{t \in \ZZ^n \setminus \{0\}: \Vert t \Vert \leq R , \gcd(t,N)=d\} \leq S_n(0,R/d) \leq 3^n(R/d)^n$, if $R \geq d$. Otherwise, the same bound still holds, because we 
	we count non-zero points.
	Using Lem.~\ref{lem:count_ortho_modN},
	one has 
	$\#\{a \ | \ \langle a,t\rangle \equiv 0 \pmod{N}\} = dN^{n-1}$ for 
	vectors $t$ with $\gcd(t,N)=d$.
	Finally,
	 \eqref{eq:proof_lem_lower_bound_lambda1} is at most
	$$3^n \sum_{d} (R/d)^n(dN^{n-1}) = 3^nR^nN^{n-1} \sum_{d} d^{1-n} 
	\leq 
	3^nR^nN^{n-1}  \sum_{d\geq 1} d^{-2} \ ,
	$$ 
	where for the last inequality we have used $n \geq 3$. 
	This sum equals $3^nR^nN^{n-1} \pi^2/6$ 
	and hence $\alpha_n(R) \leq 3^nR^n \pi^2/(6N)$.
	Taking $R$ equal to $R_\varepsilon := \frac{1}{3}(6N\varepsilon/\pi^2)^{1/n}$
	gives $\alpha_n(R_\varepsilon) \leq \varepsilon$.
	In conclusion, letting $k_\varepsilon = \min(N,R_{\varepsilon}) = R_\varepsilon$, 
	gives the result. \qed
\end{proof}

\subsubsection*{Step 2.}
\label{sss:Step2}
To $(\fB,a) \in \Omega \times  (\ZZ/N\ZZ)^n$, we associate the vector $F_\fB(a)$, 
which we identify with an HLP. 
The first step of Algorithm I 
computes a reduced basis of 
$(\cM(a))^{\perp_N}$. 
For $\delta \in (1/4,1]$, we consider a $\delta$-LLL
reduced basis $\{u_i^{(\fB,a,\delta)}\}_i$ of $\cM(a)^{\perp_N}$.
We establish an upper bound for $\Vert u_{m-n}^{(\fB,a,\delta)} \Vert$, by  
Minkowski's Second Theorem and a counting argument similar to Step 1.
Using $\calL(\fB)^\perp \subseteq (\cM(a))^{\perp_N}$,
$\delta$-LLL (Thm.~\ref{theo:LLL}) outputs vectors $\{u_i^{(\fB,a,\delta)}\}_i$
such that 
\begin{equation}\label{eq:step2_LLL}
\Vert u_{m-n}^{(\fB,a,\delta)} \Vert \leq c^{(m-1)/2}  \lambda_{m-n}(\calL(\fB)^\perp)
\end{equation}
where $c = (\delta-1/4)^{-1}$.
We obtain an upper bound for 
$\lambda_{m-n}(\calL(\fB)^{\perp})$
by Minkowski's Second Theorem (Eq.~\eqref{eq:minko_2}):
\begin{equation}\label{eq:pfI_last_min_Lperp}
\lambda_{m-n}(\calL(\fB)^\perp) \leq \prod_{i=1}^{m-n} \lambda_i(\calL(\fB)^\perp) 
\leq  ((2/3)(m-n))^{(m-n)/2}\Vol(\calL(\fB)^{\perp}) \ ,
\end{equation}
which gives
$\lambda_{m-n}(\calL(\fB)^\perp) \leq ((2/3)(m-n))^{(m-n)/2} \mu^n$,
since we have
$\Vol(\calL(\fB)^{\perp}) \leq \Vol(\calL(\fB))\leq \mu^n$ (Lem.~\ref{lem:lattice}).
This gives for every $a \in (\ZZ/N\ZZ)^n$:
\begin{equation}\label{eq:up_bd_um-n}
\Vert u_{m-n}^{(\fB,a,\delta)}\Vert 
\leq c^{(m-1)/2}  ((2/3)(m-n))^{(m-n)/2} \mu^n \ .
\end{equation}

\subsubsection*{Step 3: Proof of Theorem \ref{theo:main_I}.}
\label{sss:proof_main_theorem}

Let $\fB\in \Omega$ and $\varepsilon \in (0,1)$. 
We continue to use 
the notation $k_\varepsilon$ introduced above.
Eq.~\eqref{eq:condition_main}
implies that $\log(N\varepsilon)$ is strictly larger than 
$\frac{n(m-1)}{2}\log(c) + n(n+1)\log(\mu) 
+ \frac{n(m-n)}{2}\log((2/3)(m-n)) + n\log(3\sqrt{n})+\log(\pi^2/6)
$;
and it is a direct computation 
to see that this is equivalent to  
\begin{equation}\label{eq:main_proof}
c^{(m-1)/2}  ((2/3)(m-n))^{(m-n)/2} \mu^n < k_\varepsilon/(\sqrt{n}\mu)\  .
\end{equation}
By Lem.~\ref{lem:lower_bound_lambda1}, $k_\varepsilon<\lambda_1((\ZZ a)^{\perp_N})$
for at least $(1-\varepsilon)N^n$ choices of $a \in (\ZZ/N\ZZ)^n$.
By Eq.~\eqref{eq:up_bd_um-n}, $c^{(m-1)/2}  ((2/3)(m-n))^{(m-n)/2} \mu^n$ is 
an upper bound
for 
$\Vert u_{m-n}^{(\fB,a,\delta)} \Vert$ where $\{u_i^{(\fB,a,\delta)}\}_i$ is a $\delta$-LLL reduced basis of 
$\cM(a)^{\perp_N}$ for every $a$.
Hence, for at least $(1-\varepsilon)N^n$ 
choices of $a \in (\ZZ/N\ZZ)^n$, \eqref{eq:main_proof}
implies 
$\Vert u_{m-n}^{(\fB,a,\delta)} \Vert  < \lambda_1((\ZZ a)^{\perp_N})/(\sqrt{n} \mu)$.
Prop.~\ref{prop:size_algo1} gives
$u_{i}^{(\fB,a,\delta)} \in \calL(\fB)^{\perp}$ 
for all $1 \leq i \leq m-n$. 
This terminates the proof.

\subsection{Proof of Theorem \ref{theo:main_II}}

Fix integers $m>n\geq 3$, $N>0$ and $\mu \in \RR_{\geq 1}$. 
It is enough to show that under the assumption in \eqref{eq:condition_main_II}, 
we can compute a sublattice $\cN_{\rm II}$ of $\overline{\calL(\fB)}$ of rank $n$.
A basis for $\overline{\cN_{\rm II}}$ then gives a basis of $\overline{\calL(\fB)}$.
To prove Thm.~\ref{theo:main_II}, we again proceed in three steps, similarly to the proof of Thm.~\ref{theo:main_I}.
Given $a \in (\ZZ/N\ZZ)^n$, we first establish an upper bound for 
$\Vert u_n \Vert$, where $\{u_i\}_i$ is a $\delta$-LLL reduced basis of 
$\cM(a)_N$. We conclude the proof using Prop.~\ref{prop:size_algo2}.

\subsubsection*{Step 1.}
\label{sss:Step1_II}

For a given $a \in (\ZZ/N\ZZ)^n$, we consider a $\delta$-LLL reduced basis $\{u_i^{(\fB,a,\delta)}\}_i$ of $\cM(a)$.
Note that by construction $(\cM(a)_\fB)_N = (\ZZ a)_N$.
The lattice $Q(a) = \cM(a)_N\cap \calL(\fB)$ is defined as in Sec.~\ref{s:algos_HLP} (see Lem.~\ref{lem:Q}).
The following lemma gives an upper bound for $\Vert u_n^{(\fB,a,\delta)} \Vert$ for almost all $a \in (\ZZ/N\ZZ)^n$.

\begin{lemma}\label{lem:upper_bound_lambdan}
	For $\varepsilon \in (0,1)$, let 
	$\ell_{\varepsilon}:= \ell_{\varepsilon}(n,N) =
	3n(\pi^2/(6\varepsilon))^{1/n} N^{1-1/n}
	$. Then
	\begin{equation*}
	\frac{1}{N^n}\cdot {\#\{a \in (\ZZ/N\ZZ)^n \ | \ \Vert u_n^{(\fB,a,\delta)} \Vert <
		c^{(m-1)/2}	n \mu^2 \ell_{\varepsilon}\}}\geq 1-\varepsilon \ .
	\end{equation*}
\end{lemma}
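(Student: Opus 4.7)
The plan is to reuse the counting bound on $\lambda_1((\ZZ a)^{\perp_N})$ already established for Algorithm~I in Lem.~\ref{lem:lower_bound_lambda1}, convert it by duality into an upper bound on $\lambda_n((\ZZ a)_N)$, and then transport that bound to the lattice $\cM(a)_N$ actually reduced in Algorithm~II.

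The first step is to apply Banaszczyk's transference (Thm.~\ref{theo:transference}) to the rank-$n$ lattice $(\ZZ a)_N \subseteq \ZZ^n$, whose dual is $((\ZZ a)_N)^\vee = (1/N)(\ZZ a)^{\perp_N}$ by the identification recalled in Sec.~\ref{s:background_lattices}. Taking $j=n$ gives $\lambda_n((\ZZ a)_N) \cdot \lambda_1((\ZZ a)^{\perp_N})/N \leq n$, i.e., $\lambda_n((\ZZ a)_N) \leq nN/\lambda_1((\ZZ a)^{\perp_N})$. Combined with Lem.~\ref{lem:lower_bound_lambda1}, for at least $(1-\varepsilon) N^n$ values of $a \in (\ZZ/N\ZZ)^n$ one has $\lambda_1((\ZZ a)^{\perp_N}) > k_\varepsilon$, and hence $\lambda_n((\ZZ a)_N) \leq nN/k_\varepsilon = \ell_\varepsilon$, by the very definitions of $k_\varepsilon$ and $\ell_\varepsilon$.

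Next I would push this bound to $Q(a) = \cM(a)_N \cap \calL(\fB)$, which by Lem.~\ref{lem:Q} is isomorphic as an abelian group to $(\ZZ a)_N$ via the coordinate map $c_\fB$. For $x \in \ZZ^n$, Cauchy-Schwarz together with $\sigma(\fB) \leq \mu$ yields $\Vert c_\fB^{-1}(x)\Vert = \Vert \sum_i x_i v_i\Vert \leq \Vert x\Vert \sqrt{\sum_i \Vert v_i\Vert^2} \leq \sqrt{n}\mu \Vert x\Vert$. Lifting $n$ linearly independent short vectors of $(\ZZ a)_N$ through $c_\fB^{-1}$ produces $n$ linearly independent vectors in $Q(a)$ of norm at most $\sqrt{n}\mu$ times as large, so $\lambda_n(Q(a)) \leq \sqrt{n}\mu \cdot \lambda_n((\ZZ a)_N) \leq \sqrt{n}\mu\, \ell_\varepsilon$. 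Since $Q(a) \subseteq \cM(a)_N$, one concludes $\lambda_n(\cM(a)_N) \leq \sqrt{n}\mu\, \ell_\varepsilon$.

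Finally, applying Thm.~\ref{theo:LLL} to the $\delta$-LLL reduced basis $\{u_i^{(\fB,a,\delta)}\}_i$ of the rank-$m$ lattice $\cM(a)_N$ with indices $j=i=n$ gives $\Vert u_n^{(\fB,a,\delta)}\Vert \leq c^{(m-1)/2}\lambda_n(\cM(a)_N) \leq c^{(m-1)/2}\sqrt{n}\mu\, \ell_\varepsilon$, which is strictly less than $c^{(m-1)/2} n\mu^2 \ell_\varepsilon$ because $\mu \geq 1$ and $\sqrt{n} \leq n$; this is the claim. The main conceptual obstacle is choosing the right lattice to plug into Banaszczyk: it is $(\ZZ a)_N$ rather than $\cM(a)_N$ directly, because the dual relation $((\ZZ a)_N)^\vee = (1/N)(\ZZ a)^{\perp_N}$ is precisely what converts the single $\lambda_1$ on the orthogonal side, already controlled by Step~1 of the proof of Thm.~\ref{theo:main_I}, into a bound on $\lambda_n$ on the primal side. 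The metric distortion between $Q(a)$ and $(\ZZ a)_N$ introduced by $c_\fB$ costs only the factor $\sqrt{n}\mu$, comfortably absorbed by the slack in the stated bound.
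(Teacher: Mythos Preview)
Your proof is correct and follows essentially the same route as the paper: Banaszczyk transference on $(\ZZ a)_N$ to convert the lower bound on $\lambda_1((\ZZ a)^{\perp_N})$ from Lem.~\ref{lem:lower_bound_lambda1} into an upper bound on $\lambda_n((\ZZ a)_N)$, then transport through $c_\fB^{-1}$ to $Q(a)\subseteq \cM(a)_N$, and finally the LLL approximation factor. The only difference is cosmetic: where the paper bounds $\Vert\sum_i x_i v_i\Vert$ by $\sum_i|x_i|\,\Vert v_i\Vert\leq \lambda_n((\ZZ a)_N)\sum_i\Vert v_i\Vert^2\leq n\mu^2\lambda_n((\ZZ a)_N)$, you use Cauchy--Schwarz to get the sharper $\sqrt{n}\,\mu\,\lambda_n((\ZZ a)_N)$ and then absorb the slack into the stated constant $n\mu^2$.
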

\begin{proof}
	Let $a \in (\ZZ/N\ZZ)^n$. As $Q(a) \subseteq \cM(a)_N$, we have 
	\begin{equation}\label{eq:pf2_boundun}
	\Vert u_n^{(\fB,a,\delta)} \Vert \leq c^{(m-1)/2} \lambda_n(Q(a)) \ .
	\end{equation}
	The lattice $Q(a)$ contains the $n$ ``short'' vectors 
	$q_1 = c_{\fB,N}^{-1}(x^{(1)}),\ldots,q_n = c_{\fB,N}^{-1}(x^{(n)})$ with 
	$\Vert x^{(j)} \Vert = \lambda_j((\ZZ a)_N)$ for $1 \leq j \leq n$.
	With $\fB = \{v_1,\ldots,v_n\}$, we can write, for every $1 \leq j \leq n$, 
	$
	q_j = \sum_{i=1}^n x^{(j)}_i v_i  
	$
	with 
	$x^{(j)} = (x^{(j)}_1,\ldots, x^{(j)}_n) \in (\ZZ a)_N$.
	Therefore, for all $1 \leq j \leq n$, 
	\begin{eqnarray}
	\Vert q_j \Vert \leq \sum_{i=1}^n |x^{(j)}_i|\Vert v_i \Vert \leq \sum_{i=1}^n  \lambda_j((\ZZ a)_N)
	\Vert v_i\Vert \leq \lambda_n((\ZZ a)_N) \sum_{i=1}^n \Vert v_i \Vert^2 \ .
	\end{eqnarray}
	This implies, since $\fB$ is $\mu$-small,
	\begin{equation}\label{eq:bound_lam_nQ}
	\lambda_n(Q(a)) \leq \max_{1\leq j \leq n}\Vert q_j \Vert \leq \lambda_n((\ZZ a)_N)  n \mu^2 \ .
	\end{equation}
	Thm.~\ref{theo:transference} applied with 
	$\Lambda = (\ZZ a)_N$ and $\Lambda^\vee = N^{-1} (\ZZ a)^{\perp_N}$ implies that
	\begin{equation*}
	\lambda_n((\ZZ a)_N)  \leq \frac{n N}{\lambda_1((\ZZ a)^{\perp_N})} \ .
	\end{equation*}
	By Lem.~\ref{lem:lower_bound_lambda1}, $\lambda_1((\ZZ a)^{\perp_N}) > 
	k_\varepsilon = \frac{1}{3}(6\varepsilon/\pi^2)^{1/n} N^{1/n}$ 
	for at least $(1-\varepsilon)N^n$ choices of $a \in (\ZZ/N\ZZ)^n$.
	Therefore,
	$
	\lambda_n((\ZZ a)_N)  <  nN/k_\varepsilon = 3n(\pi^2/(6\varepsilon))^{1/n} N^{1-1/n} 
	= \ell_\varepsilon
	$
	for at least $(1-\varepsilon)N^n$ choices of $a \in (\ZZ/N\ZZ)^n$.
	The bound for $\Vert u_n^{(\fB,a,\delta)} \Vert$ then follows by combining \eqref{eq:pf2_boundun} and \eqref{eq:bound_lam_nQ}. \qed
\end{proof}

\subsubsection*{Step 2.}
\label{sss:Step2_II}

We now compute a lower bound for the right-hand side of 
the formula in Prop.~\ref{prop:size_algo2}, for every $a \in (\ZZ/N\ZZ)^n$. We clearly have:
\begin{equation}\label{eq:pf2_rhs}
\frac{1}{\mu^n}\cdot \frac{\Vol(\calL(\fB)_N)}{\prod_{i=n+2}^m \lambda_i(\cM(a)_N)} 
\geq \frac{1}{\mu^n}\cdot \frac{N^{m-n}}{\prod_{i=n+2}^m \lambda_i(\cM(a)_N)} \ .
\end{equation}
Since $N\ZZ^m \subseteq \cM(a)_N$, we have $\lambda_i(\cM(a)_N) \leq N$ for every $1\leq i \leq m$. 
Thereby, we have
$\prod_{i=n+2}^m \lambda_i(\cM(a)_N) \leq N^{m-n-1}$, 
which in Eq.~\eqref{eq:pf2_rhs} gives, for every $a \in (\ZZ/N\ZZ)^n$:
\begin{equation}\label{eq:low_bd_rhs_II}
\frac{1}{\mu^n}\cdot \frac{\Vol(\calL(\fB)_N)}{\prod_{i=n+2}^m \lambda_i(\cM(a)_N)} 
\geq 
\frac{N}{\mu^n} \ .
\end{equation}

\subsubsection*{Step 3: Proof of Theorem \ref{theo:main_II}.}
\label{sss:proof_main_theorem_II}
Let $\fB \in \Omega$ and $\varepsilon \in (0,1)$. 
The assumption in Eq.~\eqref{eq:condition_main_II} implies
that
$
\log(N\varepsilon) > \frac{(m-1)n}{2}\log(c) + n(n+2)\log(\mu) + n\log(3n^2) +\log(\pi^2/6)
$,
which by a direct computation is equivalent to 
\begin{equation}\label{eq:main_proof_II}
c^{(m-1)/2} n \mu^2 \ell_\varepsilon < \frac{N}{\mu^n} \ ,
\end{equation}
where 
$\ell_\varepsilon = 3 n (\pi^2/(6\varepsilon))^{1/n} N^{1-1/n}$ 
is as in Lem.~\ref{lem:upper_bound_lambdan}.
By Lem.~\ref{lem:upper_bound_lambdan}, the left-hand side is an upper bound for 
$\Vert u_{n}^{(\fB,a,\delta)}\Vert$ for at least $(1-\varepsilon)N^n$ of the choices of $a$, 
where $\{u_i^{(\fB,a,\delta)}\}_i$ is a $\delta$-LLL reduced basis of $\cM(a)_N$.
By Eq.~\eqref{eq:low_bd_rhs_II} the right-hand side is a lower bound for 
$\frac{1}{\mu^n}\cdot \frac{\Vol(\calL(\fB)_N)}{\prod_{i=n+2}^m \lambda_i(\cM(a)_N)}$, for every $a \in (\ZZ/N\ZZ)^n$.
Hence, for at least $(1-\varepsilon)N^n$ of $a \in (\ZZ/N\ZZ)^n$, Eq.~\eqref{eq:condition_main_II} and Prop.~\ref{prop:size_algo2} give
$u_i^{(\fB,a,\delta)} \in \overline{\calL(\fB)}$ for all $1\leq i \leq n$.
This terminates the proof.

\subsection{Comparison}
\label{ss:compare_theory_heuristic}

We first compare Thm.~\ref{theo:main_I} and Thm.~\ref{theo:main_II}.
Table \ref{tb:asymptotic_logN_proven} summarizes the asymptotic lower bounds for 
$\log(N\varepsilon)$ as $n \to \infty$.
It appears that Algorithm II achieves slightly better bounds. 

\begin{table}[H]
	\begin{center}
		\renewcommand{\arraystretch}{1.3}
		\begin{tabular}{c||c|c}
			& \multicolumn{2}{c}{$\log(N)$} \\
			\hline 
			$m$ &~Algorithm I (Thm.~\ref{theo:main_I})~& ~Algorithm II (Thm.~\ref{theo:main_II})~\\[0.2pt]
			\hline
			$n+O(1)$ 	& ~$O(n^2\log(\mu))$~  &  ~$O(n^2\log(\mu))$~ \\
			$O(n)$		 &  ~$O(n^2\max(\log(\mu),\log(n)))$~ & ~$O(n^2\log(\mu))$~   \\
			$O(n^\ell) , \ell \in \RR_{>1}$ & ~$O(n^2\max(\log(\mu),n^{\ell-1} \log(n)))$~   & 
			~$O(n^2\max(\log(\mu),n^{\ell-1}))$~\\
		\end{tabular}
	\end{center}
	\caption{Asymptotic lower bounds for $\log(N)$ as functions of $n,\mu$}
	\label{tb:asymptotic_logN_proven}
\end{table}

We compare Thm.~\ref{theo:main_I} and Thm.~\ref{theo:main_II} 
with the heuristic 
estimates in Sec.~\ref{s:heuristic} (with $r=1$).
The terms in $\log(c)$ are to be compared with those in $\log(\iota)$.
As our proofs build upon 
non-tight upper bounds (e.g.~Minkowski bounds, or the number of integral points in spheres), 
our proven formulae are expectedly weaker.
The main difference between our heuristic and theoretical lower bounds for $\log(N)$ occurs in the term containing $\log(\mu)$.
In the case of Algorithm I, this difference comes from our upper bound 
for the last minimum of $\calL^\perp$ by Minkowski's Second Theorem in Eq.~\eqref{eq:pfI_last_min_Lperp}.
In the case of Algorithm II, this difference comes from our upper bound for 
$\prod_{i=n+2}^{m} \lambda_i(\cM(a)_N)$ in Eq.~\eqref{eq:low_bd_rhs_II}.


\section{Variations of the HLP}
\label{s:variations}

In this section we consider variations of the problem in Def.~\ref{defi:HLP}.
We first consider a variant including noise vectors.
Next, we consider a decisional version.

\subsection{HLP with noise}
\label{ss:noisy_HLP}

\begin{definition}\label{defi:noisyHLP}
	Let $\mu,\rho \in \RR_{>0}$, integers $1 \leq r \leq n \leq m$
	and $N$. 
	Let $\calL \subseteq \ZZ^m$ be a 
	$\mu$-small lattice of rank~$n$
	and $\{w_j\}_{j=1,\ldots,r}$ be linearly independent vectors in $\ZZ^m$
	such that there exist linearly independent vectors 
	$\{x_j\}_{j=1\ldots,r}$ in $\ZZ^m$ satisfying
	$w_j -x_j \in \calL \pmod{N}$
	and $\Vert x_j \Vert \leq \rho$ for all $j$.
	The {\em Noisy Hidden Lattice Problem (NHLP)} is the task to compute from the knowledge of 
	$n,N$ and the vectors $\{w_j\}_j$,
	a basis of the completion of any lattice $\Lambda$ satisfying the properties of $\calL$.
\end{definition}

We solve the NHLP by reducing it in the first place to a HLP.
Let $\cX$ be the rank-$r$ lattice generated by $\fX=\{x_j\}_j$; 
let $\fB$ be a $\mu$-small basis of $\calL$.
We assume that $\calL \cap \cX = \{0\}$, so that  
$\calL \oplus \cX$ has rank $n+r$.
By assumption, 
$\calL \oplus \cX$ has size 
$\sigma(\fB\cup\fX) \leq \sigma(\fB)+\sigma(\fX)\leq \mu + \rho$
and contains $\{w_j\}_j$ modulo $N$.
Therefore, the vectors $\{w_j\}_j$ are an instance of HLP with hidden lattice $\calL \oplus \cX$.\\

We first treat the special case when $\rho$ 
is larger than $\mu$.
The application of either Algorithm I or Algorithm II to $\{w_j\}_j$, 
reveals 
a reduced basis of $\overline{\calL \oplus \cX}$ if the parameters are suitable.
If $\rho$ is larger than $\mu$, 
one can distinguish, in a reduced basis of 
$\overline{\calL \oplus \cX}$, 
the vectors of $\overline{\calL}$ from 
those of $\overline{\cX}$. \\

In the general case, without the assumption $\rho > \mu$,
we do not expect a significant 
gap between vectors of $\overline{\calL}$ and $\overline{\cX}$ in a reduced basis of 
$\overline{\calL \oplus \cX}$,
and thereby, cannot directly identify
$\overline{\calL}$.
We overcome this problem via an embedding in larger dimension and the resolution 
of a system of linear equations.
More precisely, 
let $\calL' \in \ZZ^{m+r}$ be embedded in $\ZZ^{m+r}$ as 
$(\calL,0)$,
that is, the vectors $(v,0) \in \calL\times \{0\}^r$.
For $1\leq j\leq r$, let 
$w_j'=(w_j,e_j) \in \ZZ^{m+r}$
and $x_j' = (x_j,e_j) \in \ZZ^{m+r}$, 
where 
$e_j \in \ZZ^r$ is the $j$th standard unit vector;
let $\cM' \subseteq \ZZ^{m+r}$ be the rank-$r$ 
lattice 
generated by $\{w_j'\}_{j}$,
and $\cX' \subseteq \ZZ^{m+r}$ the 
rank-$r$ lattice generated by $\{x_j'\}_j$.
Clearly, 
$\cM' \subseteq \calL' \oplus \cX' \pmod{N}$,
and $\calL' \oplus \cX'$ is a small hidden lattice
of rank $n+r$ in dimension $m+r$.
We proceed as follows to compute
$\overline{\calL}$.
Let $\pi : \ZZ^{m+r} \to \ZZ^r$ be the projection 
onto the last $r$ coordinates.
We can distinguish between vectors in
$\overline{\calL}$ and $\overline{\cX}$ 
by noticing that
for every $v \in \calL'$, it holds $\pi(v) = 0$,
and $\cX' \cap \{v \in \ZZ^{m+r} : \pi(v)=0\} = \{0\}$.
We consequently recover (basis) vectors $v$ of $\overline{\calL}$ from 
vectors in
$\overline{\calL'}$ by solving a system of linear equations, imposing the condition
$\pi(v)=0$.

Practically,
let $B$ be a reduced basis matrix
of 
$\overline{\calL' \oplus \cX'}$,
say
$
B = [V | U] 
\in \ZZ^{(n+r)\times(m+r)} 
$,
where
$V \in \ZZ^{(n+r) \times m}$
and 
$U \in \ZZ^{(n+r) \times r}$,
and computed by 
either Algorithm I or II,
on input $\cM'$. 
By Sec.~\ref{s:heuristic}, we expect to compute such $B$ 
successfully under the 
heuristic conditions 
\eqref{eq:logN_Algo1}
and 
\eqref{eq:heuristic_logN_Algo2_without_prop},
with, essentially, replacing
$n$ by $n+r$, 
$m$ by $m+r$
and $\mu$ by $\mu+\rho$.
We next 
compute the left-kernel of $U$, that is,
$K \in \ZZ^{n \times (n+r)}$
such that $KU=0_{n,r}$.
This implies
$
KB = [KV|0_{n,r}]
$.
Heuristically, the rows in $KB$ must be in $\overline{\calL'}$,
as the last $r$ components are zero.
Then the rows of 
$KV$ form a basis for $\overline{\calL}$.
Namely, we heuristically expect to uniquely recover 
$\overline{\calL'}$,
as it is unlikely in the ``generic'' case,
that there exists a small lattice 
$\Lambda \neq \calL$ of rank $n$ in $\ZZ^m$ 
such that 
$\Lambda \oplus \cX$ contains 
$\cM$ modulo $N$.

\begin{algorithm}
	\caption{Solve the NHLP in general}
	\label{alg:nhlp_system}
	\flushleft {\bf Parameters:}
	The HLP parameters $n,m,r,\mu,\rho,N$ from Def.~\ref{defi:noisyHLP}\\
	{\bf Input:}
	A valid input for the NHLP\\
	{\bf Output:}		
	A basis of the lattice $\overline{\calL}$ (under suitable parameter choice)\\
	(1)	Run Algorithm I or Algorithm II on the lattice $\cM' \subseteq \ZZ^{m+r}$ generated by $\{w_j'\}_j$; write the output basis vectors into the rows of a matrix
	$B \in \ZZ^{(n+r)\times(m+r)}$ \\
	(2)
	Write  
	$B = [V | U]$
	with 
	$V \in \ZZ^{(n+r) \times m}$
	and 
	$U \in \ZZ^{(n+r) \times r}$.
	Compute $K \in \ZZ^{n \times (n+r)}$
	such that $KU=0_{n \times r}$\\
	(3) Return the basis given by the rows of $KV$
\end{algorithm}

\subsection{Decisional HLP}
\label{ss:decisional_HLP}

\begin{definition}\label{defi:Decision_HLP}
	Let $\mu \in \RR_{\geq 1}$, integers $1 \leq r \leq m$ 
	and $N \in \ZZ$. 
	Let $\cM \subseteq \ZZ^m$ be a lattice
	of rank $r$.
	The {\em Decisional Hidden Lattice Problem (DHLP)} is the task to decide from
	the knowledge of $\mu,N$ and a basis of $\cM$, 
	whether there exists a $\mu$-small lattice $\calL \subseteq \ZZ^m$ of rank $1 \leq n \leq m$
	such that $\cM \subseteq \calL \pmod{N}$.
\end{definition}

The rank of $\calL$ is not given as input, and our algorithm is able to detect it. 
Note that when $\calL$ exists, then there exist many small lattices of lower ranks (e.g.~the sublattices of $\calL$). Therefore, we would like $\calL$ to be of maximal rank.

\subsubsection{A geometric approach.}
\label{sss:decisional_gaps}
To solve the DHLP for $\cM$ and $N$, we consider 
the successive minima of $\cM_N$ (resp.~$\cM^{\perp_N}$) and show that the existence of a small lattice 
$\calL$ impacts the geometry of $\cM_N$ (resp.~$\cM^{\perp_N}$).
Lattices with gaps in their minima and the impact on cryptosystems are for example studied in \cite{gaps}. 

\begin{lemma}\label{lem:gaps}
	For every lattice $\Lambda \subseteq \ZZ^m$ of rank $m$ and 
	every sublattice $\Lambda' \subseteq \Lambda$ of rank $0< m'< m$, 
	one has
	$
	\frac{\prod_{k=m'+1}^m \lambda_k(\Lambda)}{\prod_{k=1}^{m'}
		 \lambda_k(\Lambda)} \geq \gamma_{m'}^{-m'}
	  \frac{\Vol(\Lambda)}{\Vol(\Lambda')^2}
	$.
\end{lemma}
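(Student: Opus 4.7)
The plan is to manipulate the ratio into a form where Minkowski's Second Theorem \eqref{eq:minko_2} can be applied to $\Lambda'$. First I would observe that the ratio in the statement can be rewritten as
\begin{equation*}
\frac{\prod_{k=m'+1}^m \lambda_k(\Lambda)}{\prod_{k=1}^{m'} \lambda_k(\Lambda)}
= \frac{\prod_{k=1}^m \lambda_k(\Lambda)}{\bigl(\prod_{k=1}^{m'} \lambda_k(\Lambda)\bigr)^2}.
\end{equation*}
The numerator is then bounded from below using the inequality $\Vol(\Lambda) \leq \prod_{k=1}^m \lambda_k(\Lambda)$ recalled in Section~\ref{s:background_lattices}.

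Next, I would bound the denominator from above by transferring to the successive minima of $\Lambda'$. Since any $k$ linearly independent vectors of $\Lambda'$ lie in $\Lambda$, the monotonicity relation $\lambda_k(\Lambda) \leq \lambda_k(\Lambda')$ holds for every $1 \leq k \leq m'$. Multiplying these inequalities yields $\prod_{k=1}^{m'} \lambda_k(\Lambda) \leq \prod_{k=1}^{m'} \lambda_k(\Lambda')$, and then Minkowski's Second Theorem \eqref{eq:minko_2} applied with $r = n = m'$ to the rank-$m'$ lattice $\Lambda'$ gives
\begin{equation*}
\prod_{k=1}^{m'} \lambda_k(\Lambda') \leq \gamma_{m'}^{m'/2}\,\Vol(\Lambda').
\end{equation*}

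Squaring this estimate and combining it with the lower bound on the numerator yields the claim directly:
\begin{equation*}
\frac{\prod_{k=1}^m \lambda_k(\Lambda)}{\bigl(\prod_{k=1}^{m'} \lambda_k(\Lambda)\bigr)^2}
\geq
\frac{\Vol(\Lambda)}{\gamma_{m'}^{m'}\,\Vol(\Lambda')^2}
= \gamma_{m'}^{-m'}\,\frac{\Vol(\Lambda)}{\Vol(\Lambda')^2}.
\end{equation*}

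There is no real obstacle here: both ingredients (the elementary Hadamard-type lower bound on $\prod \lambda_k(\Lambda)$ and the Minkowski upper bound on $\prod \lambda_k(\Lambda')$) are already recorded in the background section, and the monotonicity $\lambda_k(\Lambda)\leq \lambda_k(\Lambda')$ is immediate from the definition of successive minima. The only subtle point worth double-checking is that the exponent on $\gamma_{m'}$ matches after squaring, which it does since $2 \cdot (m'/2) = m'$.
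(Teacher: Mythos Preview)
Your proof is correct and follows essentially the same approach as the paper: rewrite the ratio as $\prod_{k=1}^m \lambda_k(\Lambda)/\bigl(\prod_{k=1}^{m'}\lambda_k(\Lambda)\bigr)^2$, bound the numerator below by $\Vol(\Lambda)$, use $\lambda_k(\Lambda)\le\lambda_k(\Lambda')$ from the sublattice inclusion, and apply Minkowski's Second Theorem to $\Lambda'$. The paper's proof is just a more compressed version of what you wrote.
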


\begin{proof}
	The quotient can be written as 
	$\prod_{k=1}^m \lambda_k(\Lambda)/(\prod_{k=1}^{m'}\lambda_k(\Lambda))^2$.
	
	One has 
	$\prod_{k=1}^m \lambda_k(\Lambda)\geq \Vol(\Lambda)$.
	The denominator is at most
	$(\prod_{k=1}^{m'} \lambda_k(\Lambda'))^2$
	as $\Lambda'$ is a sublattice of $\Lambda$. Finally, Eq.~\eqref{eq:minko_2} gives the result. \qed
\end{proof}

\begin{corollary}\label{cor:gaps}
	Let $\cM \subseteq \ZZ^m$ be a lattice of rank $r$ and $N>0$ an integer.
	Assume that $\Vol(\cM^{\perp_N}) = N^r$.
	If there exists a $\mu$-small lattice $\calL \subseteq \ZZ^m$ of rank $n$
	such that $\cM \subseteq \calL \pmod{N}$, then 
	\begin{equation}\label{eq:gaps_minima}
	\frac{\prod_{k=m-n+1}^m \lambda_k(\cM^{\perp_N})}{\prod_{k=1}^{m-n}
		\lambda_k(\cM^{\perp_N})} \geq 
	\gamma_{m-n}^{-(m-n)} \frac{N^r}{\mu^{2n}} \ .
	\end{equation}
\end{corollary}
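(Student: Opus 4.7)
The plan is to deduce Corollary \ref{cor:gaps} as a direct consequence of Lemma \ref{lem:gaps}, applied to $\Lambda = \cM^{\perp_N}$ (which has full rank $m$ since $N\ZZ^m \subseteq \cM^{\perp_N}$) and to the sublattice $\Lambda' = \calL^{\perp}$, which has rank $m-n$. The first thing to verify is the inclusion $\calL^{\perp} \subseteq \cM^{\perp_N}$: if $v \in \calL^{\perp}$ and $w \in \cM$, then by hypothesis there exists $\ell \in \calL$ with $w \equiv \ell \pmod{N}$, hence $\langle v, w \rangle \equiv \langle v, \ell \rangle = 0 \pmod{N}$, so $v \in \cM^{\perp_N}$.

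Taking $m' = m - n$ in Lemma \ref{lem:gaps} then yields
$$
\frac{\prod_{k=m-n+1}^m \lambda_k(\cM^{\perp_N})}{\prod_{k=1}^{m-n} \lambda_k(\cM^{\perp_N})} \geq \gamma_{m-n}^{-(m-n)} \frac{\Vol(\cM^{\perp_N})}{\Vol(\calL^{\perp})^2}.
$$
By the standing assumption, $\Vol(\cM^{\perp_N}) = N^r$. It remains to bound the denominator from above: using Lemma \ref{lem:lattice}(d), one has $\Vol(\calL^{\perp}) = \Vol(\overline{\calL}) \leq \Vol(\calL)$, and by the Hadamard inequality in Lemma \ref{lem:lattice}(c), applied to any $\mu$-small basis $\fB$ of $\calL$, $\Vol(\calL) \leq \sigma(\fB)^n \leq \mu^n$. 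Consequently $\Vol(\calL^{\perp})^2 \leq \mu^{2n}$, and substituting gives the desired inequality.

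There is no real obstacle: both ingredients — the inclusion $\calL^{\perp} \subseteq \cM^{\perp_N}$ and the volume bound $\Vol(\calL^{\perp}) \leq \mu^n$ — are immediate from the hypotheses and the previously established Lemma \ref{lem:lattice}, while Lemma \ref{lem:gaps} supplies the ratio estimate. The only mild point worth noting is that $\cM^{\perp_N}$ genuinely has full rank $m$ so that Lemma \ref{lem:gaps} applies, which follows from $N\ZZ^m \subseteq \cM^{\perp_N}$.
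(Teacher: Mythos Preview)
Your proof is correct and follows essentially the same route as the paper: apply Lemma~\ref{lem:gaps} with $\Lambda = \cM^{\perp_N}$ and $\Lambda' = \calL^{\perp}$, then use $\Vol(\cM^{\perp_N}) = N^r$ and $\Vol(\calL^{\perp}) \leq \Vol(\calL) \leq \mu^n$ from Lemma~\ref{lem:lattice}. You add a bit more detail (verifying $\calL^{\perp} \subseteq \cM^{\perp_N}$ and the full rank of $\cM^{\perp_N}$), which is fine and arguably makes the argument more self-contained.
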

\begin{proof}
	Lem.~\ref{lem:gaps} applied to the lattices $\Lambda = \cM^{\perp_N}$ 
	and $\Lambda' = \calL^{\perp}$ of rank $m-n$ gives 
	the lower bound $\gamma_{m-n}^{-(m-n)} \Vol(\cM^{\perp_N})/\Vol(\calL^\perp)^2$ on the considered ratio.
	We conclude using
	$\Vol(\cM^{\perp_N}) = N^r$,
	and $\Vol(\calL^{\perp}) \leq \Vol(\calL) \leq \mu^n$ by Lem.~\ref{lem:lattice}. \qed
\end{proof}

A similar result holds for
$\cM_N$
by either using Lem.~\ref{lem:gaps} with 
$\Lambda' = Q$
or invoking Banaszczyk’s Thm.~\ref{theo:transference}.
We observe that this ratio grows as $N$ gets larger.

\subsubsection*{Non-HLP instances.} 
We compare with lattices $\cM$ not lying in a
$\mu$-small lattice $\calL$
modulo $N$ (we call this a Non-HLP instance). 
Expectedly, this holds for random lattices
$\cM$, 
when $r$ basis 
vectors are uniformly chosen from $(\ZZ/N\ZZ)^m$
and we rely on the Gaussian Heuristic \eqref{eq:gauss_heuristic}.
If $\Vol(\cM^{\perp_N})=N^r$ (which is likely for random $\cM$), the minima are heuristically 
$\sqrt{m/(2\pi e)} N^{r/m}$.
Therefore, for any $1 \leq n \leq m-1$:
\begin{equation}\label{eq:gaps_heuristic}
\frac{\prod_{k=m-n+1}^m \lambda_k(\cM^{\perp_N})}{\prod_{k=1}^{m-n}
\lambda_k(\cM^{\perp_N})} 
\gtrapprox 
\frac{(\sqrt{m/(2\pi
 e)}N^{\frac{r}{m}})^{n}}{(\sqrt{m/(2\pi e)}N^{\frac{r}{m}})^{m-n}}
= \sqrt{\frac{m}{2\pi e}}^{2n-m}N^{\frac{r(2n-m)}{m}} \ .
\end{equation}
For $m=2n$, this approximation is $1$,
and much larger 
if $2n > m$. In particular, we observe that \eqref{eq:gaps_heuristic} 
is in general much smaller than \eqref{eq:gaps_minima}, 
as can be seen when choosing $m > 2n$ and relatively small values of $\mu$. 

\subsubsection{Heuristic Algorithm for DHLP.}
\label{sss:decisional_algo}

Since we cannot compute the successive
minima efficiently, 
the ratio in Cor.~\ref{cor:gaps}
is not practical. 
Instead, we approximate the minima 
by the norms of the vectors in an LLL-reduced basis.
Using Thm.~\ref{theo:LLL}, it is immediate to establish 
a similar lower bound for the ratio 
$$g_{m-n}(\cM^{\perp_N}):=\frac{\prod_{k=m-n+1}^m \Vert u_k \Vert}{\prod_{k=1}^{m-n}	\Vert u_k \Vert} $$
where 
$\{u_k\}_k$ is an LLL-reduced basis of $\cM^{\perp_N}$.
Such a lower bound gives a necessary condition 
for the existence of 
a $\mu$-small lattice $\calL$ such that $\cM \subseteq \calL \pmod{N}$. 
Eq.~\eqref{eq:gaps_minima} shows an explicit dependence on $n$, the rank of $\calL$.
Since $n$ is unknown, 
one first detects $m-n$ (the rank of $\calL^\perp$) by computing the successive ratios 
$\{g_{m-\ell}(\cM^{\perp_N})\}_\ell$ defined by 
$g_{m-\ell}(\cM^{\perp_N}) = \prod_{k=m-\ell+1}^m \Vert u_k \Vert/\prod_{k=1}^{m-\ell} \Vert u_k \Vert$ for 
$\ell = 1,\ldots,m-1$ and $\{u_k\}_k$ a reduced basis of $\cM^{\perp_N}$; one has 
$g_1(\cM^{\perp_N}) \geq g_2(\cM^{\perp_N}) \geq \ldots \geq g_{m-1}(\cM^{\perp_N})$.
One then identifies the smallest index $m-\ell_0$ such that $g_{m-\ell_0}(\cM^{\perp_N})$ is significantly larger 
than $g_{m-\ell}(\cM^{\perp_N})$ 
for all $\ell < \ell_0$.
In that case, we expect the existence of a hidden 
small lattice of rank $n=\ell_0$.
Again, this is easily adapted for Algorithm II when considering $\cM_N$ instead of $\cM^{\perp_N}$.
Although this approach only solves DHLP in one direction, 
we heuristically expect the converse to be true: 
if these gaps are sufficiently large, 
then there exists a small lattice $\calL$ containing $\cM$ modulo $N$.

\section{Applications and Impacts on Cryptographic Problems}
\label{s:applications}

In this section we address applications of the 
hidden lattice problem in 
cryptography
and discuss the impact of our algorithms.
In the literature, these problems are typically solved by means of Algorithm I. 
Our Algorithm II provides a competitive alternative for solving these problems.

\subsection{CRT-Approximate Common Divisor Problem}
\label{ss:crtacd}

The Approximate Common Divisor Problem based on Chinese Remaindering 
can be stated as follows (e.g.~\cite[Def.~3]{corper} or\cite[Def.~5.1]{sim_diag_incomplete}):
\begin{definition}\label{def:crtacd}
	Let $n, \eta, \rho \in \ZZ_{\geq 1}$.
	Let $p_1,\ldots,p_n$
	be distinct $\eta$-bit prime numbers
	and $N=\prod_{i=1}^{n} p_i$.
	Consider a non-empty finite set
	$\mathcal{S} \subseteq \ZZ\cap[0,N)$
	such that for every $x \in \mathcal{S}$:
	$$x \equiv x_{i} \pmod{p_i} \ , \quad 1 \leq i \leq n $$
	for 
	integers $x_{i} \in \ZZ$ satisfying
	$|x_{i}|\leq 2^\rho$.
	
	The {\em CRT-ACD} problem states as follows:
	given the set $\mathcal{S}$,
	the integers $\eta,\rho$ and $N$, factor $N$ completely
	(i.e.~find the prime numbers $p_1,\ldots,p_n$).
\end{definition}

An algorithm for this problem was described in 
\cite{corper} for $\#\cS = O(n)$,
and improved in 
\cite{sim_diag_incomplete} to $\#\cS = O(\sqrt{n})$.
These algorithms build on two steps, 
where the first step agrees and is based on solving a hidden lattice problem.

\subsubsection{HLP and algorithms for the CRT-ACD problem.}
We follow \cite{corper} to recall the first step of the algorithm.
Let $\cS=\{x_1,\ldots,x_n,y\}$ and $x = (x_1,\ldots,x_n) \in \cS^n$,
with $\#\cS = n+1$.
The vector 
$b = (x,y\cdot x) \in \ZZ^{2n}$ is public, and
by the Chinese Remainder Theorem, letting 
$x \equiv x^{(i)} \pmod{p_i}$
and $y \equiv y^{(i)} \pmod{p_i}$ 
for all $1\leq i \leq n$, 
one has 
$$b \equiv \sum_{i=1}^n c_i (x^{(i)},y^{(i)}x^{(i)}) =: \sum_{i=1}^n c_i b^{(i)} \pmod{N}\ ,$$
for some integers $c_1,\ldots,c_n$.
If $\{x^{(i)}\}_i$ are $\RR$-linearly independent, then so are 
$\{b^{(i)}\}_i$ and generate a $2n$-dimensional lattice $\calL$ of rank $n$.
Importantly, by Def.~\ref{def:crtacd}, 
$\{b^{(i)}\}_i$ are reasonably short vectors with entries bounded by $2^{2\rho}$, approximately.
The basis $\{b^{(i)}\}_i$ of $\calL$ has size
$\mu := \sigma(\{b^{(i)}\}_i) = 
(n^{-1}\sum_{i=1}^{n} \Vert b^{(i)} \Vert^2)^{1/2} \lessapprox \sqrt{2n}\cdot 2^{2\rho}$, i.e.~$\mu = O(n^{1/2}2^{2\rho})$. 
Since the basis $\{b^{(i)}\}_i$ of $\calL$ is secret, and 
$b \in \calL \pmod{N}$, we view the vector $b$ 
(or rather, the rank-one lattice $\cM = \ZZ b$) 
as an instance of a 
HLP of rank $r=1$, with hidden lattice $\calL$ of size $O(n^{1/2} 2^{2\rho})$.
Based on this observation, the algorithms in \cite{corper, sim_diag_incomplete} rely on the orthogonal lattice 
attack to compute a basis of $\overline{\calL}$.
Therefore, following Algorithm I, the first step is to run lattice reduction on $\cM^{\perp_N}$,
and construct the sublattice $\cN_{\rm I}$ of $\calL^\perp$.
Upon recovery of such a basis, the authors proceed with an 
``algebraic attack'', 
based on computing the eigenvalues of a well-chosen (public) matrix and 
then revealing the prime numbers $\{p_i\}_i$ by a gcd-computation.

\subsection{The Hidden Subset Sum Problem}
\label{ss:hssp}

The definition of the hidden subset sum problem (HSSP), 
as considered in \cite{ns99, hssp_cg20}, is as follows:

\begin{definition}\label{def:hssp}
	Let $n, m \in \ZZ_{\geq 1}$ with $n \leq m$,
	and $N \in \ZZ_{\geq 2}$.
	Let $v \in \ZZ^m$ be such that 
	$v \equiv \sum_{i=1}^n \alpha_i x_i \pmod{N}$
	with $\alpha_i \in \ZZ$ and $x_i \in \{0,1\}^m$.
	
	The problem states as follows:
	given $v$ and $N$,
	compute vectors $\{x_i\}_i \in \{0,1\}^m$ and 
	integers $\{\alpha_i\}_i$ such that 
	$v \equiv \sum_{i=1}^n \alpha_i x_i \pmod{N}$. 
	
\end{definition}

\subsubsection{HLP and algorithms for HSSP.}
In \cite{ns99}, Nguyen and Stern describe 
an algorithm  
in two steps.
The first step solves a HLP with $r=1$:
namely, 
the lattice 
$\calL:= \bigoplus_{i=1}^n \ZZ x_i$
is $\mu$-small for $\mu=\sigma(\{x_i\}_i) \leq \sqrt{m}$, 
and $v \in \calL \pmod{N}$.
Nguyen and Stern, therefore compute 
a basis of $\overline{\calL}$, 
by the orthogonal lattice algorithm.
The second step reveals 
$\{x_i\}_i$ and $\{\alpha_i\}_i$ 
from such a basis.
Recently, Coron and Gini \cite{hssp_cg20} 
argued that, due to the need of running 
BKZ with increasingly 
large block-size to compute a short basis,
the second step of the algorithm in
\cite{ns99} has exponential
(in $n$) complexity, and is therefore practical only in low dimension.
Moreover, \cite{hssp_cg20} gives 
an alternative second step, 
based on solving a system of multivariate equations, 
running in polynomial-time,
at the cost of a larger dimension
$m=O(n^2)$.
The first step, i.e.~the resolution of the associated
HLP, remains unchanged
and is solved by Algorithm I.
In dimension
$m=O(n^2)$, lattice reduction 
becomes unpractical. Therefore the authors 
employ a technique to compute $\overline{\calL}$ when $m$ is a lot larger
than $n$ (see \cite[Sec.~4.1, Sec.~5]{hssp_cg20}),
based on computing a reduced basis
for $(\ZZ v)^{\perp_N} \subseteq \ZZ^m$
by parallelizing lattice reduction over several components of $v$, of smaller dimension, say $2n$. 
The idea is to reduce the HLP $\cM \subseteq \calL \pmod{N}$ 
in dimension $m$, to multiple HLP's $\pi_j(\cM) \subseteq \pi_j(\calL) \pmod{N}$,
where $\{\pi_j\}_j$ are projections $\ZZ^m \to \ZZ^{m'}$ onto block-components, 
where $m'<m$ (e.g.~when $m=O(n^2)$, one can let $m'=O(n)$ and $O(n)$ projections $\{\pi_j\}_j$).
One then 
reconstructs a full basis of $\overline{\calL}$.
We note that it is immediate to adapt this method to our Algorithm II.

In \cite{ns99,hssp_cg20}, the density
is defined as $n/\log(N)$, as analogy to the 
classical subset sum problem \cite{cssplo} (see Rem.~\ref{rem:density}).
When $m=2n$, the density is heuristically at most $O(1/n)$ 
and proven $O(1/(n\log(n)))$ in \cite{hssp_cg20}.

Our Algorithm II can in turn be used to solve the HLP in the first step of the algorithms of \cite{ns99,hssp_cg20}.
Note that when $m=2n$ (and $\mu = O(\sqrt{n})$), the density is heuristically at most $O(1/n)$ 
and proven $O(1/(n\log(\sqrt{n})))$ according to Table \ref{tb:asymptotic_logN_proven}.
This gives a factor $2$ improvement compared to \cite{hssp_cg20}.

\subsection{More applications related to Cryptography}

\subsubsection*{CLT13 Multilinear Maps.} The work \cite{cltindslots} studies the security of CLT13 
Multilinear Maps \cite{clt13} with independent slots.
This is an example of our NHLP from Def.~\ref{defi:noisyHLP}, as we now explain; we refer to \cite{cltindslots} for details.
The attacker derives equations 
$w_k \equiv \sum_{i=1}^\theta \alpha_{ik} m_{i} + R_k \pmod{x_0}$, for $1 \leq k \leq d$,
where $\{w_k\}_j \subseteq \ZZ^\ell$ are public vectors (corresponding to zero-tested encodings), 
$\{\alpha_{ik}\}_{i,k}$ unknown integers, 
$\{m_{i}\}_i$ short secret plaintext 
vectors, and 
$\{R_k\}_k$ unknown ``noise'' vectors.
Here, $x_0$ is a public integer.
We interpret this directly as a NHLP with $r=d$: namely
$w_k-R_k \in \calL:=\bigoplus_{i=1}^\theta \ZZ m_i \pmod{x_0}$, for all $1 \leq k \leq d$,
and the basis $\{m_i\}_i$ of $\calL$ is ``small''.
As noticed in \cite{cltindslots}, 
the $\{\alpha_{ik}\}_{i,k}$ carry a special structure, making
the attack more
direct. The first step of Algorithm I for the NHLP 
(Sec.~\ref{ss:noisy_HLP}),
reveals a basis of the lattice $\Lambda$ 
of vectors orthogonal to $\{m_{i}\}_i$ modulo certain small 
primes $\{g_i\}_i$ defining the plaintext ring.
Therefore, we view $\Lambda$ as hidden lattice, 
rather than $\calL$.
Upon computing a basis of $\Lambda$, the attack 
proceeds by 
revealing the (secret) volume $\prod_{i=1}^\theta g_i$ of $\Lambda$.

\subsubsection{RSA Signatures.} In \cite{fault_attacks_emv_sign}, the authors describe 
a cryptanalysis on a signature scheme based on RSA, following an attack similar to
\cite{crypt_NS_SAC}.
In \cite{crypt_rsacrt}, a very similar attack is described against RSA-CRT signatures.
Following \cite[Sec.~3]{fault_attacks_emv_sign},
by considering $\ell$ faulty signatures together with a public modulus $N = pq$,
one derives an equation 
$
a_i + x_i + c y_i \equiv 0 \pmod{p}
$
for $1 \leq i \leq \ell$,
where $\{a_i\}_i$ are known integers, and $\{x_i\}_i, \{y_i\}_i, c$ are unknown. 
Letting $a = (a_i)_i$ gives $a \in \calL \pmod{p}$,
where $\calL$ is the rank-$2$ lattice $\ZZ x \oplus \ZZ y$
generated by $x=(x_i)_i$ 
and $y=(y_i)_i$ in $\ZZ^\ell$.
If $\{x_i\}_i$ and $\{y_i\}_i$ are sufficiently small, then $\calL$ is a suitably small lattice,
describing a HLP of rank $2$ in dimension $\ell$. 
The authors follow Algorithm I
to compute a basis $\{x',y'\}$ of $\overline{\calL}$.
Upon recovery of $x',y'$, 
the attack proceeds by simple linear algebra and a gcd computation to reveal $p$.

In this case, the hidden lattice has rank only $2$. 
Therefore, Algorithm II is much more direct in the second step.
While $\ell$ is not very large in \cite{fault_attacks_emv_sign,crypt_rsacrt}, we note that, in general, 
computing the completion of the rank-$2$ lattice $\cN_{\rm II}$,
is much faster than computing the orthogonal of the rank-$(\ell-2)$ lattice $\cN_{\rm I}$, 
as in \cite{fault_attacks_emv_sign, crypt_rsacrt}.

\section{Practical aspects of our algorithms}
\label{s:practical}
We provide practical results for the HLP obtained in SageMath \cite{sage_92}.
Our experiments are done on a standard 
laptop.
The source code is available at \url{https://pastebin.com/tNmgjkwJ} using the password \texttt{hlp\_}.
For $a\in \ZZ_{\geq 2}$, let $\fp(a)$ denote the smallest prime number larger than $2^a$.

\subsubsection*{Instance generation.}
We generate random instances of the HLP and test Algorithms I and II.
Given fixed integers $r,n,m,N$ as in Def.~\ref{defi:HLP}, we uniformly at random generate a 
basis $\fB$ for a hidden lattice $\calL$,
where the absolute values of the entries of each vector are bounded by some positive integer $\alpha$,
i.e.~every vector has infinity norm at most $\alpha$.
We let $\mu:=\sigma(\fB)$, then by construction, 
$\calL$ is $\mu$-small.
To generate a lattice $\cM \subseteq \calL \pmod{N}$ of rank $r$, we generate
$r$ uniformly random linear combinations modulo $N$ 
of the basis vectors in $\fB$.
For large $n,m,\mu$, the lattice $\calL$ is likely complete. 

\subsubsection*{Running times.}
In Table \ref{tb:hlp_running_times} we compare the running times for our algorithms.
Here $N=\fp(a)$ where $a$ is indicated in the column 
``$\log(N)$''.
For Algorithm I, ``Step 1'' runs LLL on $\cM^{\perp_N}$ and 
computes $\cN_{\rm I}$;
``Step 2'' computes $\cN_{\rm I}^\perp$ following Sec.~\ref{ss:practical_aspects}.
For Algorithm II, ``Step 1'' runs LLL on $\cM_N$ and computes 
$\cN_{\rm II}$, while ``Step 2'' computes 
$\overline{\cN_{\rm II}}$.
For the latter, we compute $\overline{\cN_{\rm II}}^{N^\infty}$; namely, in these cases we have 
$(\overline{\calL}:\cN_{\rm{II}}) = N^{n-r}$.
For this step, we compare the running time with Magma \cite{magma}, which seems to perform the finite field linear algebra much faster. 
The total running times for Algorithm II are therefore very competitive and constitute a major strength of Algorithm II against Algorithm I.
As observed in Sec.~\ref{ss:practical_aspects},
the running time for Algorithm II 
is largely reduced for larger values of $r$ (e.g.~$\geq m/2$). 
In these cases, Algorithm II outperforms Algorithm I.

\subsubsection*{Modulus size.}


In Table \ref{tb:hlp_mod_size}, we fix $m, r$ and $\mu$ and find, 
for increasing values of $n$, the smallest value for $\log(N)$ such that 
a randomly generated HLP with parameters $n,m,r,\mu,N$ is solvable by our algorithms.
We compare the practical values with the heuristic values from Sec.~\ref{s:heuristic}.
The columns ``heuristic'' stand for the lower bounds in \eqref{eq:logN_Algo1}, resp.~\eqref{eq:heuristic_logN_Algo2_without_prop}. 
Practically, we observe that the condition in Eq.~\eqref{eq:heuristic_II_direct_condition} is already satisfied for $\theta=1$, thus we may neglect the last term in Eq.~\eqref{eq:heuristic_logN_Algo2_without_prop}, which becomes negative.
We run LLL, so we set $\log(\iota)=0.03$.
We study two series (1 and 2) according to $m,r$.
Conjecturally, we see that the practical bound for $\log(N)$ is the same for both algorithms; 
this is to be expected from the duality (see Sec.~\ref{ss:dual_algo}).
An interesting question is to find a theoretical optimal bound for $\log(N)$ 
fitting best with the practical behaviour.

\subsubsection*{Output quality of basis.}
In random generations, $\calL$ is complete with high probability, and we compare
$\mu$ to
the size of the basis output by Algorithms I and II.
We observe that our algorithms compute much smaller (LLL-reduced) bases of $\calL$,
and in fact sometimes recover the basis uniquely (up to sign). 
In particular, they sometimes solve
the stronger version of Def.~\ref{defi:HLP}, that of computing 
a {\em $\mu$-small basis} instead of {\em any}.
Table \ref{tb:size_basis} is obtained for $m=2n=4r$
for increasing values of $m$;
in this case $\mu$ is approximately $N^{1/4}$, as predicted theoretically by Eq.~\eqref{eq:expected_size_mu}.

\subsubsection*{Decisional HLP.}
We test the decisional version of the HLP 
of Sec.~\ref{ss:decisional_HLP}. 
Table \ref{tb:gaps} shows different values for 
$g_{m-n}(\cM^{\perp_N})$ if $\cM$ lies in a $\mu$-small lattice 
$\calL$ modulo $N$ (HLP instance), 
and if $\cM$ is randomly sampled (random instance). 
For the latter,
we compare with the heuristic
bound \eqref{eq:gaps_heuristic} 
and report it in the coulmn ``heuristic''. 
We fix $n=25$ and consider increasing values of $r<35$.\\

\begin{table}
	\begin{center}
		\begin{tabular}{|c|c|c||c|c|}\cline{4-5}
			\multicolumn{3}{c|}{}  & \multicolumn{2}{c|}{Size of output basis} \\ 
			\hline
		 \ $m$ \ & \ $\log(N)$ \ & \ $\log(\mu)$ \ & ~Algorithm I~ &  ~Algorithm II~\\
			\hline
			$76$	& $175$ & $42.335$ 	& $42.335$ &  ~$42.335$~ \\
			 $160$  &$325$ & $77.874$ &~$79.927$~&~$79.814$~  \\
			 $320$  &~$80$~&~$13.36$~&~$18.132$~&~$18.182$~\\
			\hline
		\end{tabular}
	\end{center}
	\caption{Sizes of output bases for Algorithms I and II}
	\label{tb:size_basis}
\end{table}

\begin{table}
	\begin{center}
		\begin{tabular}{|c|c|c||c|c|c|c|}\cline{4-7}
			\multicolumn{3}{c|}{}  & \multicolumn{2}{c|}{~HLP instance~} & \multicolumn{2}{c|}{~random instance~} \\ 
			\hline
			\ $r$ \ & \ $m$ \ & \ $\log(N)$ \ & \ $\mu$ \ & ~$g_{m-n}( \cM^{\perp_N})$~ &  ~$g_{m-n}( \cM^{\perp_N})$~&~heuristic~\\
			\hline
			$1$ & $45$ & $350$ & $581.73$ &  $3.52 \cdot 10^{21}$  & $2.74 \cdot 10^{12}$&$5.73\cdot 10^{12}$\\
			$5$	& $70$ & $100$ 	& $580.11$ &  $7.65 \cdot 10^{49}$ &
			$3.39 \cdot 10^{-56}$& $7.4\cdot 10^{-50}$\\
			$10$ &  $50$  &$100$  & $2037.15$ &  $2.95 \cdot 10^{188}$  &
			$1.27$& $1$\\
			$20$ & $80$  &$85$ & $2949.66$ & $6.06\cdot 10^{305}$   &  $3.16\cdot10^{-191}$& $4.09\cdot 10^{-180}$  \\
			\hline
		\end{tabular}
	\end{center}
	\caption{Gaps in LLL-reduced bases of
		$\cM^{\perp_N}$ }
	\label{tb:gaps}
\end{table}

\begin{table}
	\begin{center}
		\begin{tabular}{|c|c|c|c||c|c|c|c|c|}\cline{5-9}
			\multicolumn{4}{c|}{}  & \multicolumn{5}{c|}{~Running Time~}\\
			\cline{5-9}
			\multicolumn{4}{c|}{}  & \multicolumn{2}{c|}{~Algorithm I~} & \multicolumn{3}{c|}{~Algorithm II~} \\ 
			\hline
			\ $r$ \ & \ $n$ \ & \ $m$ \ &$\log(N)$&Step 1&Step 2&Step 1&Step 2 (Sage)&Step 2 (Magma) \\
			\hline
			$60$ & $150$ & $200$ &  $140$  & $7$ min $13$ s	& $1$ min $20$ s & $10$ min $2$ s& $3$ min $4$ s & $0.37$ s\\
			$110$ &$150$ & $200$ &  $90$ & $6$ min $20$ s & $1$ min $29$ s &  $4$ min  &  $1$ min $33$ s & $0.24$ s \\
			$175$&$180$ & $200$ &  $140$ & $6$ min $56$ s& $1$ min $24$ s & $1$ min $39$ s & $20$ s & $0.19$ s \\
			\hline
			$80$&$100$ & $300$ &  $75$ & $3$ min $51$ s & $30$ min $17$ s & $22$ min $51$ s & $30$ s& $0.12$ s\\
			$150$&$200$ & $300$ &  $75$ & $145$ min $29$ s & $22$ min $23$ s & $116$ min $14$ s & $6$ min $19$ s & $0.56$ s\\
			\hline
			$75$&$150$ & $400$ &  $80$ & $75$ min $16$ s & $326$ min $44$ s & $414$ min $51$ s & $5$ min $13$ s & $0.61$ s\\
			$235$&$275$ & $400$ &  $80$ & $527$ min $43$ s & $117$ min $2$ s & $304$ min $10$ s & $15$ min $39$ s & $0.95$ s\\
			\hline
		\end{tabular}
	\end{center}
	\caption{Running times for Algorithms I and II; the entries of a small basis of $\calL$ lie in $(-2^{10},2^{10})\cap\ZZ$, which gives $\log(\mu)\approx 13$ in all instances}
	\label{tb:hlp_running_times}
\end{table}

\begin{table}
	\begin{center}
		\begin{tabular}{|c|c||c|c|c|c|}\cline{3-6}
			\multicolumn{2}{c|}{} & 
			\multicolumn{4}{c|}{~$\log(N)$~} \\
			\cline{3-6}
			\multicolumn{2}{c|}{} & \multicolumn{2}{c|}{~Algorithm I~} & \multicolumn{2}{c|}{~Algorithm II~} \\ 
			\cline{2-6}
			\multicolumn{1}{c|}{} &~$n$~&~heuristic~&~practice~ &~heuristic~&~practice~ \\
			\hline
			&~$10$~&~$52$~&~$41$~&~$46$~&~$41$~   \\
			&~$20$~&~$113$~&~$92$~&~$103$~&~$92$~\\
			Series $1$	  	   	  &~$40$~&~$282$~&~$241$~&~$274$~&~$241$~ \\
			($m=100, r=5$)    &~$80$~&~$1486$~&~$1384$~&~$1643$~&~$1384$~ \\
			&~$90$~&~$3240$~&~$3075$~&~$3695$~&~$3075$~\\
			\hline
			\hline
			&~$50$~&~$57$~&~$48$~&~$54$~&~$48$~ \\
			Series $2$			  &~$100$~&~$139$~&~$121$~&~$143$~&~$121$~ \\
			($m=250, r=30$)	 &~$160$~&~$327$~&~$296$~&~$381$~&~$296$~ \\
			&~$200$~&~$676$~&~$629$~&~$857$~&~$629$~\\
			\hline
		\end{tabular}
	\end{center}
	\caption{Minimal values for $\log(N)$ as a function of the other parameters; the entries of a small basis of $\calL$ lie in $(-2^{15},2^{15}) \cap \ZZ$, which gives $\log(\mu) \approx 18$ in all instances}
	\label{tb:hlp_mod_size}
\end{table}

\newpage
\bibliography{hidden_lattice}
\bibliographystyle{alpha}

\end{document}